\newcommand{\D}{\nabla}
\newcommand{\na}{ {\nabla}  }
\newcommand{\La}{ {\Delta}  }
\newcommand{\pp}{{\partial}}
\newcommand{\ds}{\displaystyle}
\newcommand{\al}{\alpha}
\newcommand{\C}{\mathcal \zeta}
\newcommand{\CC}{\mathcal C}
\newcommand{\R}{{\mathbb R}}
\newcommand{\e}{\epsilon}
\newcommand{\uu}{{\bar u}}
\newcommand{\uuu}{{\hat  u}}
\newcommand{\FF}{\bar{F}}
\newcommand{\be}{\begin{equation}}
\newcommand{\bee}{\begin{equation*}}
\newcommand{\ee}{\end{equation}}
\newcommand{\eee}{\end{equation*}}
\newcommand{\bs}{\begin{split}}
\newcommand{\esp}{\end{split}}
\newcommand{\la}{\lambda}
\newcommand{\F}{\mathcal  F}
\newcommand{\om}{{\omega}}
\newcommand{\Fn}{{\langle F, \nu \rangle}}
\newcommand{\Pn}{{\langle P, \nu \rangle}}
\newcommand{\xei}{{\langle F, \eei \rangle}}
\newcommand{\omn}{{\langle \om, \nu \rangle}}
\newcommand{\Fom}{{\langle F, \omega \rangle}}
\newcommand{\gah}{{\hat \gamma}}
\newcommand{\alh}{{\hat \alpha}}
\newcommand{\bw}{{\hat w}}
\newcommand{\brw}{{\bar w}}
\newcommand{\br}{{\bar r}}
\newcommand{\eei}{{\bf{e_i}}}
\newcommand{\eej}{{\bf{e_j}}}
\newcommand{\Fhn}{{\langle \hat F, \nu \rangle}}
\newcommand{\Fhne}{{\langle \hat F_\e, \nu \rangle}}
\newcommand{\EE}{\big( \frac{\pp}{\pp t} - \frac 1{H^2} \La \big )}
\newcommand{\bx}{{\bf x}}
\newcommand{\bbx}{{\bar x}}
\newtheorem{thm}{Theorem}[section]
\newtheorem{lem}[thm]{Lemma}
\newtheorem{prop}[thm]{Proposition}
\newtheorem{claim}{Claim}[section]
\theoremstyle{remark}
\newtheorem{rem}{Remark}[section]
\numberwithin{equation}{section}
\theoremstyle{definition}
\newcommand{\bremark}{\begin{remark} \em}
\newcommand{\eremark}{\end{remark} }
\title{Inverse Mean Curvature Evolution of entire graphs}
\author{Panagiota Daskalopoulos}
\address{ {\bf P. Daskalopoulos:} Department of Mathematics, Columbia University,  New York, NY 10027, USA.}
\email{pdaskalo@math.columbia.edu}
\author{Gerhard Huisken}
\address{ {\bf G. Huisken:} Department of Mathematics, University of T\"ubingen, T\"ubingen, Germany }
\email{gerhard.huisken@aei.mpg.de}
\begin{document} 

\begin{abstract}
We study the evolution  of strictly mean-convex entire graphs over $\R^n$ by {\em  Inverse Mean Curvature flow}.  First we  establish the {\em global existence} of starshaped entire graphs with {\em superlinear growth} at infinity. The main result in this work concerns the critical  case of {\em asymptotically conical}  entire convex graphs. In this case we show that there exists a time $ T < +\infty$,  which depends on the growth at infinity of the initial
data, such that the unique solution  of the flow exists for all $t < T$. Moreover, as $t \to T$ the solution converges to a  flat plane. 
Our techniques exploit the {\em ultra-fast diffusion } character of the fully-nonlinear flow, a property that implies that  the asymptotic behavior at spatial  infinity of our solution plays a crucial influence on the maximal time of existence, as such behavior propagates {\em infinitely fast}  towards the interior. 

\end{abstract}

\maketitle 

\tableofcontents

\section{Introduction} 

We consider a family of  immersions 
$F_t : M^n \to \R^{n+1}$ of n-dimensional mean convex hypersurfaces in $R^{n+1}$. We say that $M_t:=F_t(M^n)$ moves by
{\em inverse mean curvature flow}  if 
\be\label{eqn-IMCF} 
\frac{\partial}{\partial t} F(z,t)  =  H^{-1}(z,t)  \, \nu(z,t), \qquad z \in M^n
\ee
where $H(z, t) > 0$ and $\nu(z, t)$  denote  the mean curvature and exterior  unit normal of the surface
$M_t$ at  the point $F(z,t)$.

The compact case is well understood. It was shown by Gerhardt \cite{Ge} that for smooth compact star-shaped initial data of strictly positive mean curvature, 
the inverse mean curvature flow  admits  a smooth solution for all times which approaches a homothetically expanding spherical solution as $t \to +\infty$, see also Urbas \cite{Ur}.
For non-starshaped initial data it is well known that singularities may develop; in the case n = 2 Smoczyk \cite{Sm} proved that such singularities can only occur if the speed becomes unbounded, or, equivalently, when the mean curvature tends to zero somewhere during the evolution.
In \cite{HI1, HI2}, Huisken and Ilmanen  developed a new level set approach to weak solutions of the flow, allowing ``jumps'' of the surfaces and solutions of weakly positive mean curvature. Weak solutions of the flow can be used to derive energy estimates in General Relativity, see \cite{HI2}  and the references therein. 

In \cite{HI}, Huisken and Ilmanen studied further regularity properties 
of  inverse mean curvature flow with compact starshaped initial data of nonnegative mean curvature by a more classical approach
than their works in \cite{HI1, HI2}. 
They showed that starshapedness combined with the ultra fast-diffusion character of the equation, imply 
that at time $t >0$ the mean curvature  of the surface becomes strictly positive yielding to $C^\infty$ regularity.
No extra  regularity assumptions on the initial data are necessary. 
This work is reminiscent of well known estimates for the fast-diffusion equation 
\be\label{eqn-fd}
\varphi_t = \D_i (\varphi^{m-1}  \D_i \varphi), \qquad \mbox{on} \,\,\, \Omega \times (0,T)
\ee
on a domain $\Omega \subset \R^n$ and with exponents $m <1$. We will actually see in the next section that under inverse 
mean curvature flow, the mean curvature $H$ 
satisfies an  ultra-fast diffusion equation modeled on \eqref{eqn-fd} with $m=-1$. The work in \cite{HI} heavily uses that 
the initial surface is compact which corresponds to the domain $\Omega$ in \eqref{eqn-fd} being bounded. 
However, the case of non-compact initial data has never been studied before. 

Motivated by the  theory for  the Cauchy problem for the ultra-fast diffusion equation \eqref{eqn-fd} on $\R^n \times (0,T)$, 
we will study in this work  equation \eqref{eqn-IMCF} in the case that  the initial surface $M_0$ is  an entire graph over $\R^n$,   
i.e. there exists a vector $\omega \in \R^{n+1}$,
$|\omega| = 1$ such that
$$\langle \omega,  \nu \rangle  <      0, \qquad \mbox{on} \,\,\, M_0.$$
We will  take from now on $\omega$ to be  the direction of the $x_{n+1}$ axis, namely  $\omega = e_{n+1} \in 
\R^{n+1}$. 
A solution  $M_t$ of \eqref{eqn-IMCF} can then be   expressed  (at each instant  $t$) as  the graph  $\FF (x,t)= (x,\uu(x,t))$ of a function 
$\uu: \R^n \times [0,T) \to \R$. In this parametrization,  the inverse mean curvature flow \eqref{eqn-IMCF} is,  up to diffeomorphisms,  equivalent to  
\be\label{eqn-IMCF2} 
\left (\frac{\partial}{\partial t} \FF(x,t) \right )^\perp=  H^{-1}(x,t)  \, \nu(x,t), \qquad x \in \R^n
\ee
where $\perp$ denotes the normal component of the vector. Equation  \eqref{eqn-IMCF2} can then be expressed in terms of the height function  
$x_{n+1}=\uu (x,t)$ 
as the  fully nonlinear equation 
\be\label{eqn-u0}
\uu_t = - \sqrt{1+|D\uu|^2}\left ( \mbox{div}  \left ( \frac {D\uu}{\sqrt{1+ |D\uu|^2}} \right )   \right )^{-1}. 
\ee

\medskip

Entire graph solutions of the mean curvature flow have been studied  by Ecker and Huisken in \cite{EH1} (see also in \cite{EH}).
It follows from these works   which  are based on local apriori estimates,  that the mean curvature flow behaves in
some sense better than the  heat equation on $\R^n$: for an initial data $M_0$ which is an  entire graph over $\R^n$, no growth conditions
are necessary to  guarantee the long time existence of the flow for all times $t \in (0,+\infty)$. Entire graph solutions of fully-nonlinear flows by powers of Gaussian curvature were recently studied by Choi, Daskalopoulos, Kim and Lee in \cite{CDKL}. 
This is an example of slow diffusion which becomes degenerate at spatial infinity. Entire graph solutions of other fully-nonlinear flows
which are homogeneous of degree one were studied in \cite{CD}. 

\medskip 

This work concerns with the  long time existence of inverse mean curator flow for 
an initial data  $M_0$ which is an entire graph. In a first step we will establish  in theorem \ref{thm-ste0} the existence for all times $t \in (0, +\infty)$  of solutions with strictly meanconvex initial data $M_0 =\{x_{n+1} = \uu_0(x)\}$  having  superlinear growth at infinity, namely $\lim_{|x| \to +\infty} |D \uu_0(x)| = +\infty$, and satisfying a uniform "$\delta$-star-shaped" condition $\langle F - \bar x_0, \nu \rangle \, H \geq \delta >0$ for some 
$\bar x_0 \in \R^{n+1}. $ These  conditions for example hold for initial data $\bar u_0(x) = |x|^q$, for $q >1$.  
The proof of this result  uses in a crucial way  the evolution of 
$\langle F - \bar x_0, \nu \rangle \, H$ and the maximum principle which guarantees   that this quantity remains bounded from below at all times.

The main result of the paper proves long-time existence and uniform finite time singular convergence for convex entire graphs with {\em conical } behavior at infinity. 
We will assume that $M_0$ lies between two  rotationally symmetric cones $x_{n+1}=\C_i(\cdot,0)$, 
with $\C_1(\cdot,0):= \alpha_0 \, |x|$  and   $\C_2(\cdot,0):= \alpha_0 \, |x| + \kappa$, $x \in   \R^n$,  namely 
$\uu$ satisfies 
\be\label{eqn-cone0}
\alpha_0\,  |x|  \leq   \uu(\cdot, 0) \leq \alpha_0\,  |x| + \kappa, \qquad \mbox{on} \,\, \R^n
\ee
for some constants $\alpha_0 >0$ and $\kappa >0$. 
\be\label{picture1}{\mbox  \qquad  \includegraphics[width=2.5in]{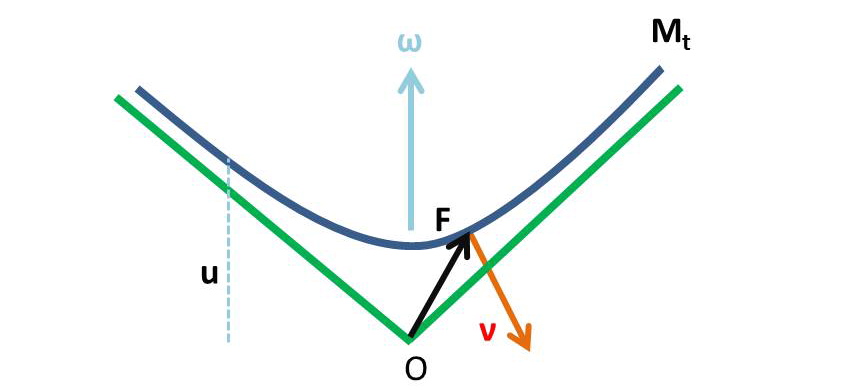} }\ee
We will see that  $M_t$ will remain convex and will lie between the
cones  $\C_1(x,t)=\alpha(t)\, |x|$ and $\C_2(x,t)=\alpha(t)\, |x|+\kappa$ which 
are  explicit solutions of  \eqref{eqn-IMCF}, namely 
\be\label{eqn-conet}
\alpha(t)\,  |x|  \leq   \uu(\cdot, t) \leq  \alpha(t)\,  |x| + \kappa, \qquad \mbox{on} \,\, \R^n. 
\ee
The coefficient $\alpha(t)$ is determined in terms of $\alpha_0$ by the ordinary differential equation   \eqref{eqn-ode1}. 
We will see in section \ref{sec-cone} that $\alpha(t) \equiv 0$ at a finite time $T=T(\alpha_0)$, which means that
the cone solutions $\C_i$ become flat at time $T(\alpha_0)$. Our goal in this work is to establish that the solution $M_t$
of \eqref{eqn-IMCF} with initial data $M_0$ will also exist up to this {\em critical time} $T$,  as stated next.

\begin{thm}\label{thm-main} Let  $M_0$ be  an entire convex $C^2$ graph $x_{n+1}=\uu_0(x)$ over $\R^n$
which lies between the two cones  as in  condition \eqref{eqn-cone0}.  
Assume in addition that  the mean curvature $H$ of $M_0$  satisfies the global bound 
 \be\label{eqn-conHF} 
c_0 \leq H \, \Fom \leq C_0.
 \ee
Let $T=T(\alpha_0)$ denote the lifetime of the cone with initial slope $\alpha_0$.  Then, there exists a unique $C^\infty$ smooth  solution $M_t$ of the \eqref{eqn-IMCF} for $t \in (0,T)$ which is an entire convex graph 
$x_{n+1}=\uu(x,t)$ over $\R^n$ and satisfies estimate  \eqref{eqn-conet} and has $H >0$ for all  $t \in (0,T)$.  As $t \to T$, the solution converges in $C^{1, \alpha}$ to some horizontal plane
of height $h \in [0, \kappa]$. 

\end{thm}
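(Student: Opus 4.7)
The plan is to realize $M_t$ as the limit of star-shaped approximations produced by Theorem \ref{thm-ste0}, and to use the explicit rotationally symmetric cones $\CC_1, \CC_2$ as two-sided barriers which localize the flow in time and drive the final $C^{1,\alpha}$ convergence. For each large $R$, I would modify $\uu_0$ outside $B_R$ so that the new initial datum $\uu_0^R$ coincides with $\uu_0$ on $B_R$, remains convex and between the two cones on the intermediate region, and is glued to a rotationally symmetric convex profile of superlinear growth (for example $\alpha_0|x| + C(|x|-2R)^{1+\e}$ for $|x|\ge 2R$). Choosing the gluing carefully one arranges that $M_0^R$ is strictly meanconvex, satisfies uniformly the $\delta$-starshaped condition of Theorem \ref{thm-ste0} about a basepoint below $M_0$, and still obeys a two-sided bound $c_0' \le H\Fom \le C_0'$. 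Theorem \ref{thm-ste0} then produces smooth all-time solutions $M_t^R$.

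Since $\alpha(t)|x|$ and $\alpha(t)|x|+\kappa$ are exact solutions of \eqref{eqn-u0}, the comparison principle applied to $\uu^R$ on large balls, with the superlinear growth of the approximants controlling the lateral boundary, yields
\begin{equation*}
\alpha(t)|x| \le \uu^R(x,t) \le \alpha(t)|x|+\kappa, \qquad t \in [0,T).
\end{equation*}
The crucial a priori bound is the propagation of \eqref{eqn-conHF}: as emphasized in the introduction, $H\Fom$ satisfies an evolution equation of ultra-fast diffusion type under IMCF, and one runs the maximum principle on $M_t^R$, using the superlinear growth of the approximants to exclude extrema escaping at infinity, to obtain $c_0' \le H\Fom \le C_0'$ uniformly in $R$ and $t \in [0,T)$. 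Combined with $\Fom = \uu^R \le \alpha(t)|x|+\kappa$, this delivers uniform positive bounds on $H$ on compact subsets of $\R^n\times(0,T)$, ruling out finite-time degeneration strictly before $T$.

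Convexity is preserved by IMCF via the maximum principle on the evolution of the second fundamental form, and the cone sandwich together with convexity yields the gradient bound $|D\uu^R(x,t)| \le \alpha(t)$ on compacts. Equation \eqref{eqn-u0} is therefore uniformly parabolic on compact subsets of $\R^n\times(0,T)$, so Krylov--Safonov and Schauder theory give interior $C^{k,\alpha}$ estimates uniform in $R$; a diagonal argument produces a smooth convex entire graph solution $\uu$ on $[0,T)$ with $H>0$, satisfying \eqref{eqn-conet}. Uniqueness follows from a comparison argument between two such solutions in the same cone band, after a small vertical translation and running the maximum principle on growing balls, then sending the translation to zero. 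For the limit as $t\to T$: since $\alpha(t)\to 0$, the cone sandwich and convexity force $|D\uu(\cdot,t)|\to 0$ locally uniformly on $\R^n$; any $t_j \to T$ subsequential limit produced by the local $C^{1,\alpha}$ estimates is convex with zero gradient on $\R^n$ and values in $[0,\kappa]$, hence a constant, and monotonicity of $\uu(x,\cdot)$ under the flow (the height decreases along the convex solutions, as one reads from \eqref{eqn-u0}) rules out dependence on the subsequence. This gives $C^{1,\alpha}$ convergence to a plane of height $h\in[0,\kappa]$.

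The principal obstacle is the propagation of the two-sided bound on $H\Fom$ for the noncompact approximants up to the critical time $T$ and uniformly in $R$. The ultra-fast diffusion structure of the scalar equation satisfied by $H\Fom$ is simultaneously the tool (bounds at spatial infinity propagate instantaneously into the interior) and the source of the difficulty (compactly supported barriers are unavailable, and the maximum principle on a noncompact evolving hypersurface requires the superlinear growth of the approximants to rule out extrema escaping at infinity). Every other step -- barrier comparison, interior parabolic regularity, uniqueness, and the final $C^{1,\alpha}$ collapse -- rests on this uniform estimate.
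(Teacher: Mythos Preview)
Your proposal contains a genuine gap at exactly the point you identify as the ``principal obstacle.'' You claim that the two-sided bound $c_0' \le H\Fom \le C_0'$ propagates by the maximum principle on the approximants $M_t^R$. The \emph{upper} bound does propagate this way (this is Proposition~\ref{prop-Habove} in the paper, and the computation is straightforward). The \emph{lower} bound does not.

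To see why, look at the evolution of the relevant quantity. In the paper's notation, $v := \Fhn H = -\Fom\,\omn\, H$ satisfies (Lemma~\ref{lem-fhatnuH})
\[
\frac{\partial v}{\partial t} - \nabla_i\Big(\frac{1}{H^2}\nabla_i v\Big) = -2\,\omn^2 + \frac{h_{ij}}{H}\langle e_i,\omega\rangle\langle e_j,\omega\rangle.
\]
The second term on the right is nonnegative by convexity, but the first term $-2\,\omn^2$ is strictly negative and does not vanish at spatial infinity (indeed $\omn^2 \to (1+\alpha(t)^2)^{-1} > 0$). The maximum principle therefore gives only $v(\cdot,t) \ge v(\cdot,0) - 2t$, which is exactly what the paper uses for \emph{short-time} existence (Claim~\ref{claim-1}) but is useless as $t \to T$. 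The same happens if you work with $\Fom H$ directly: its evolution equation (proof of Proposition~\ref{prop-Habove}) has the bad-sign terms $-\frac{|A|^2}{H}\Fom + 2\,\omn$ on the right. The paper states this failure explicitly in Remark~1.3: for conical-at-infinity data, ``maximum principle arguments do not apply in this case to give us the required bound from below on $H$.''

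The actual argument occupying Sections~\ref{sec-Lp} and~\ref{sec-Linfty-below} is substantially more delicate. One subtracts the explicit cone solution by setting $w := \hat\gamma(t)\,(\Fhn H)^{-1}$ with $\hat\gamma$ a slightly perturbed version of the cone's $\gamma(t)$, so that $(w-1)_+$ is compactly supported at each positive time by the asymptotic result Proposition~\ref{prop-asb}. One then proves weighted $L^p$ bounds on $(w-1)_+$ via an energy inequality (Lemma~\ref{lem-energy}) combined with a Hardy inequality on $M_t$ (inequality~\eqref{eqn-hardy}), and upgrades these to an $L^\infty$ bound by Moser iteration (Theorem~\ref{thm-Hbelow0}). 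The Hardy inequality is what compensates for the lack of a pointwise subsolution property. None of this is captured by your outline; the phrase ``run the maximum principle'' at the crucial step is precisely where the argument breaks down.
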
 

%\begin{rem} [Behavior of the solution in Theorem \ref{thm-main} at $t=T(\alpha_0)$. 
%
%\end{rem} 

\begin{rem} (Vanishing mean curvature)  
Condition \eqref{eqn-conHF}  implies that  the  initial data $M_0$ has strictly  positive mean curvature 
$H>0$. Actually for generic initial data which is a graph $x_{n+1}  = \bar u_0(x)$  satisfying \eqref{eqn-cone0} 
one expects that  $H(x,\bar u_0(x)) \sim |x|^{-1}$ as $|x| \to +\infty$. Hence,  under the extra assumption  $H >0$ 
one has that \eqref{eqn-conHF} holds.  It would be interesting to see if 
it is possible that the result of Theorem \ref{thm-main} is valid  under the weaker assumption that \eqref{eqn-conHF} 
holds only near spatial infinity, thus allowing  the  mean curvature $H$ to vanish on a compact set of $M_0$. 

\end{rem}

%\begin{rem} (Vanishing mean curvature) Our condition \eqref{eqn-conHF} allows that mean curvature of the initial surface  to possibly vanish on a compact 
%set. It is  the behavior of our surface at infinity that guarantees that at times  $0 < t < T$ the solution $M_t$
%has strictly positive mean curvature everywhere.  This is a result of the ultra-fast diffusion character of the equation
%and also implies the smoothness of the  solution. Our Theorem says that the fact that the initial surface lies between 
%the two cones $\C_i$ implies that  our solution $M_t$ exists up to the same time $T$ as the two cones $\C_i$. 
%
%
%\end{rem}

\begin{rem} The solutions in Theorem \ref{thm-main} have linear growth at infinity and they are  critical  in  the sense that  all other  solutions are expected to live longer. For conical at infinity initial data, one has  $\langle F - \bar x_0, \nu \rangle \, H \sim |x|^{-1}$ as $x \to +\infty$.  We will see that maximum principle arguments do not apply in this case to give us the required bound from below on $H$ which will guarantee existence.  One needs to use integral bounds involving $H$. Since our solutions are non-compact special account needs to be given to the behavior at infinity of our solution. This is one of the challenges in this work.   \end{rem}

\begin{rem}[Graphical parametrization] 
While we use  the graphical parametrization in conditions \eqref{eqn-cone0} and  \eqref{eqn-conet} and to establish the 
short time existence of our solution,   for all the a priori estimates,  which will occupy the majority  of this work,   we will use the 
geometric parametrization in \eqref{eqn-IMCF} where 
${\partial} F(z,t) / \partial t $ is assumed to  be in the direction of the normal $\nu$. 
This is because the evolution of the various geometric quantities becomes more simplified in the case of equation \eqref{eqn-IMCF}. In particular, $\bar u:= \bar u(x,t), x \in \R^n $  will denote the  height  function in the graph parametrization, while $u:= \Fom$ will denote the height   function in the geometric parametrization.
\end{rem}

\section{The geometric equations and preliminaries}\label{sec-prelim}

We recall the evolution equations for various geometric quantities
under the inverse mean curvature flow. Let $g = \{g_{ij} \}_{ 1 \leq i,j \leq n}$  and 
$A = \{ h_{ij} \}_{ 1 \leq i,j \leq n}$  be the first and second fundamental form of the evolving
surfaces, let $H = g^{ij}\, h_{ij}$   be the mean curvature, 
$\langle F - \bar x_0, \nu \rangle$ be  the support function with respect to a point $\bar x_0 \in \R^{n+1}$ and $d\mu $  the induced measure on $M_t$.

\begin{lem}[Huisken, Ilmanen \cite{HI}] \label{lem-HI1} Smooth solutions of \eqref{eqn-IMCF} with $H >0$ satisfy  
 \begin{enumerate}

{\em \item $ \ds \frac{\partial}{\partial t} g_{ij} =  \frac 2H \, h_{ij}$
\item $\ds \frac{\partial}{\partial t} d\mu = d\mu$
%\item $\ds \frac{\partial}{\partial t}  \langle F, \omega  \rangle  = \frac 1{H^2} \,   \Delta  \, \langle F, \omega \rangle $
\item $\ds \frac{\partial}{\partial t}  \nu =  - \nabla H^{-1} =   \frac 1{H^2} \, \nabla H$
\item $\ds \frac{\partial}{\partial t} h_{ij} = 
 \frac 1{H^2}  \Delta h_{ij} -  \frac 2{H^3} \nabla_i H \nabla_j H + \frac{ |A|^2}{H^2}  \, h_{ij}$
\item $\ds \frac{\partial}{\partial t} H  =  \nabla_i \big  ( \frac 1{H^2}  \, \nabla_i H \big ) -     \frac{|A|^2}{H} = 
\frac 1{H^2} \Delta H  -  \frac 2{H^3} |\nabla H|^2 -     \frac{|A|^2}{H}$
\item $\ds \frac{\partial}{\partial t} H^{-1}    =  \frac 1{H^2} \Delta H^{-1}   +    \frac{|A|^2}{H^2}\, H^{-1}$
\item $\ds \frac{\partial}{\partial t} \langle F - \bar x_0, \nu \rangle  = \frac 1{H^2} \,  \Delta  \, \langle F - \bar x_0, \nu \rangle + \frac  { |A|^2}{H^2}  \, \langle F - \bar x_0, \nu \rangle$. }
%\item $\ds \frac{\partial}{\partial t}  \langle \omega, \nu \rangle  = \frac 1{H^2} \,   \Delta  \, \langle \omega, \nu \rangle +  \frac 
%{ |A|^2}{H^2}  \, \langle \omega, \nu \rangle$ (see Lemma \ref{lem-wnu}). 

\end{enumerate}
\end{lem}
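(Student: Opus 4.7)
The plan is to derive general variational formulas for a hypersurface flow $\partial_t F = \phi\,\nu$ with arbitrary normal speed $\phi$, and then specialize to $\phi = 1/H$. Differentiating $g_{ij} = \langle \partial_i F,\partial_j F\rangle$ in $t$, using $\partial_t F = \phi\,\nu$ together with the Weingarten relation $\partial_i \nu = h_i^k \partial_k F$, gives $\partial_t g_{ij} = 2\phi\, h_{ij}$, which is (1) when $\phi = 1/H$. The standard identity $\partial_t d\mu = \tfrac12 g^{ij}\partial_t g_{ij}\, d\mu = \phi H\, d\mu$ then yields (2). For the unit normal, the constraints $|\nu|=1$ and $\langle \nu,\partial_j F\rangle = 0$ force $\partial_t \nu$ to be tangential with $\langle \partial_t\nu,\partial_j F\rangle = -\partial_j\phi$, so $\partial_t \nu = -\nabla\phi$, giving (3).

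The technical core is (4). Starting from $h_{ij} = -\langle \partial_i\partial_j F,\nu\rangle$, differentiating in $t$ and using Gauss--Weingarten, one arrives at the general evolution
\[
\partial_t h_{ij} = -\nabla_i\nabla_j\phi + \phi\, h_{ik} h_j^k.
\]
With $\phi = 1/H$ I would expand $\nabla_i\nabla_j H^{-1} = -H^{-2}\nabla_i\nabla_j H + 2H^{-3}\nabla_i H\nabla_j H$, and then invoke the Simons identity
\[
\Delta h_{ij} = \nabla_i\nabla_j H + H\, h_{ik}h_j^k - |A|^2 h_{ij}
\]
to replace the Hessian of $H$ by $\Delta h_{ij}$ plus curvature terms. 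The resulting $-H^{-1}h_{ik}h_j^k$ exactly cancels the $\phi\,h_{ik}h_j^k = H^{-1}h_{ik}h_j^k$ contribution coming from the general formula, leaving (4). Formula (5) follows either by tracing (4) with the time-dependent $g^{ij}$ (accounting for $\partial_t g^{ij} = -2\phi h^{ij}$) or, more efficiently, by applying the general identity $\partial_t H = -\Delta\phi - \phi|A|^2$ to $\phi = 1/H$ and expanding $\Delta H^{-1}$. Relation (6) is then immediate from $\partial_t H^{-1} = -H^{-2}\partial_t H$ together with the same algebraic manipulation.

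For the support function (7) I would combine the direct computation
\[
\partial_t \langle F-\bar x_0,\nu\rangle = \phi - \langle F-\bar x_0,\nabla\phi\rangle
\]
with the elliptic identity
\[
\Delta \langle F-\bar x_0,\nu\rangle = \langle F-\bar x_0,\nabla H\rangle + H - |A|^2 \langle F-\bar x_0,\nu\rangle,
\]
which rests on $\Delta F = -H\nu$, the Codazzi equation $\nabla^i h_{ij} = \nabla_j H$, and the Weingarten relation. Multiplying the second by $1/H^2$ and matching terms produces (7). The calculations are standard bookkeeping; the only real pitfall is tracking signs in the Simons identity, since the clean parabolic/divergence form of (4) and (5) hinges entirely on the cancellation it effects with the zeroth-order curvature term from the general formula.
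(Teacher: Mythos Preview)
Your derivation is correct and complete. The paper itself does not prove this lemma: it simply attributes the evolution equations to Huisken--Ilmanen \cite{HI} and states them without argument, so there is no approach to compare against. What you have written is the standard way these identities are obtained---deriving the general variation formulas for a normal flow $\partial_t F=\phi\,\nu$ and then specializing to $\phi=1/H$, with Simons' identity doing the work in (4) and the Codazzi equation in (7)---and your account of the key cancellation in (4) is accurate.
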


We will next assume that $M_t$ is a graph in the direction of the vector $\omega$ and a smooth solution
of \eqref{eqn-IMCF} on $0 < t \leq \tau$ with $H >0$
and we  will derive the evolution of other useful geometric quantities under the IMCF.  
We will use the following identities
that hold in terms of a local orthonormal frame $\{ \eei \}_{1 \leq i \leq n}$ on $M_t$:
\be\label{eqn-lof}
 \D_{\eei} \nu = h_{ij} \, \eej, \quad \D_{\eei} \eej  = - h_{ij} \, \nu, \quad  \D_{\eei} \eei = - H\, \nu.
 \ee

\begin{lem} The norm of the position vector $|F|^2$ satisfies
\be\label{eqn-evolF}
\EE \, |F|^2 = - 2n H^{-2}  + 4 H^{-1} \,  \langle F, \nu \rangle.
\ee

\end{lem}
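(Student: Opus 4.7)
The plan is to compute the two ingredients of the operator $\EE$ separately and combine them. The time derivative is immediate: since $M_t$ moves by $F_t = H^{-1}\nu$, we have
$$\frac{\partial}{\partial t}|F|^2 = 2\langle F, F_t\rangle = 2H^{-1}\langle F,\nu\rangle.$$
All that remains is to identify the intrinsic Laplacian $\Delta |F|^2$ on $M_t$.

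For the spatial term I would work in a local orthonormal frame $\{\eei\}$ on $M_t$ satisfying the Gauss--Weingarten formulas \eqref{eqn-lof}, and use the standard identity $\Delta F = -H\nu$ for the position vector (where $\Delta$ is the intrinsic Laplacian applied componentwise in $\R^{n+1}$). Writing $f := |F|^2$, one first computes $\nabla_{\eei} f = 2\langle F, \eei\rangle$, and then, using the Gauss formula $D_{\eei}\eej = \nabla_{\eei}\eej - h_{ij}\nu$ to relate the ambient and intrinsic derivatives, one obtains in a frame that is parallel at the evaluation point
$$\nabla_{\eej}\nabla_{\eei} f = 2\delta_{ij} - 2 h_{ij}\langle F,\nu\rangle.$$
Taking the trace in $i=j$ gives
$$\Delta |F|^2 = 2n - 2H\,\langle F,\nu\rangle.$$

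Plugging both pieces into $\EE |F|^2 = \partial_t |F|^2 - H^{-2}\Delta|F|^2$ yields
$$\EE|F|^2 = 2H^{-1}\langle F,\nu\rangle - \frac{2n}{H^2} + \frac{2}{H}\langle F,\nu\rangle = -2n H^{-2} + 4H^{-1}\langle F,\nu\rangle,$$
which is the claimed identity.

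There is no real obstacle here; the only point requiring care is keeping track of the distinction between the ambient connection $D$ on $\R^{n+1}$ and the Levi-Civita connection $\nabla$ on $M_t$, so that the second derivative of $|F|^2$ picks up exactly the second fundamental form term $-2h_{ij}\langle F,\nu\rangle$ upon tracing. Everything else is formal differentiation, and the factor of $4$ in the final answer comes from combining the $2H^{-1}\langle F,\nu\rangle$ from $\partial_t|F|^2$ with the $2H^{-1}\langle F,\nu\rangle$ coming from $-H^{-2}\Delta|F|^2$.
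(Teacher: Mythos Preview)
Your proof is correct and follows essentially the same approach as the paper: compute $\partial_t|F|^2 = 2H^{-1}\langle F,\nu\rangle$ and $\Delta|F|^2 = 2n - 2H\langle F,\nu\rangle$ in a local orthonormal frame, then combine. The paper's version is slightly terser (it does not spell out the Hessian before tracing), but the computation is identical.
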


\begin{proof}
We have
$$\nabla_i |F|^2  = 2  \xei, \qquad 
\Delta |F|^2  =  2 n     - 2H \Fn$$
and
$$\frac{\partial }{\partial t}  |F|^2=2 \  \langle F, F_t \rangle  = 2\, H^{-1} \,   \Fn$$
which readily gives \eqref{eqn-evolF}. 

\end{proof}

\begin{lem}\label{lem-fnu} For any $\bar x_0 \in \R^{n+1}$  the support function
$H \,   \langle F- \bar x_0,   \nu \rangle $  satisfies the equation 
\be\label{eqn-evolfn}
\EE \big ( H \,   \langle F- \bar x_0,   \nu \rangle \big )  =  - \frac 2{H^3} \nabla H \cdot  \nabla 
\big ( H \,   \langle F- \bar x_0,   \nu \rangle \big ).
\ee
\end{lem}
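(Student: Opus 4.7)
The plan is a direct computation starting from items (5) and (7) of Lemma \ref{lem-HI1}, exploiting the fact that the two evolution equations are structured so that the $|A|^2$ contributions cancel when forming the product $Q := H \, \langle F - \bar x_0, \nu \rangle$.

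First I would set $v := \langle F - \bar x_0, \nu \rangle$ and $Q := H v$, and apply the product rule to $\partial_t Q = v \, \partial_t H + H \, \partial_t v$. Plugging in item (5),
$$\partial_t H = \tfrac{1}{H^2} \Delta H - \tfrac{2}{H^3} |\nabla H|^2 - \tfrac{|A|^2}{H},$$
and item (7),
$$\partial_t v = \tfrac{1}{H^2} \Delta v + \tfrac{|A|^2}{H^2} v,$$
yields
$$\partial_t Q = \tfrac{v}{H^2} \Delta H - \tfrac{2 v}{H^3} |\nabla H|^2 - \tfrac{v|A|^2}{H} + \tfrac{1}{H} \Delta v + \tfrac{v |A|^2}{H},$$
in which the two $|A|^2$ terms cancel, so that $\partial_t Q = \tfrac{v}{H^2}\Delta H - \tfrac{2v}{H^3}|\nabla H|^2 + \tfrac{1}{H}\Delta v$.

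Second, I would compute the spatial term by expanding
$$\tfrac{1}{H^2}\Delta Q = \tfrac{1}{H^2}\Delta(Hv) = \tfrac{v}{H^2}\Delta H + \tfrac{2}{H^2} \nabla H \cdot \nabla v + \tfrac{1}{H}\Delta v,$$
and subtract it from $\partial_t Q$. The $\tfrac{v}{H^2}\Delta H$ and $\tfrac{1}{H}\Delta v$ terms drop out, leaving
$$\left(\tfrac{\partial}{\partial t} - \tfrac{1}{H^2}\Delta\right) Q = -\tfrac{2v}{H^3}|\nabla H|^2 - \tfrac{2}{H^2}\nabla H \cdot \nabla v.$$

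Finally I would recognize the right-hand side as a transport term in $Q$: since $\nabla Q = v \, \nabla H + H \, \nabla v$, one has
$$-\tfrac{2}{H^3} \nabla H \cdot \nabla Q = -\tfrac{2v}{H^3}|\nabla H|^2 - \tfrac{2}{H^2}\nabla H \cdot \nabla v,$$
which matches exactly and proves \eqref{eqn-evolfn}. There is no real obstacle here beyond careful bookkeeping; the content of the lemma is the fortunate cancellation of the $|A|^2$ terms, which reflects the fact that $H$ and $v$ obey dual evolutions under IMCF so that their product evolves by a pure drift-diffusion equation with no zero-order source.
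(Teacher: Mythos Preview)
Your computation is correct and is exactly the approach the paper indicates: the paper's own proof simply says the identity ``readily follows by combining the evolution equations of $H$ and $\langle F-\bar x_0,\nu\rangle$,'' and you have carried out precisely that combination, with the $|A|^2$ cancellation and the recombination into $-\tfrac{2}{H^3}\nabla H\cdot\nabla Q$ done cleanly.
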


\begin{proof} Readily follows by combining the evolution equations of  $H$ and  $  \langle F- \bar x_0$.
\end{proof}

\begin{lem}\label{lem-omnu} For a graph solution $M_t$, the quantity $\langle \omega, \nu \rangle$ satisfies 
\be\label{eqn-omnu}
\EE \omn =   \frac 
{ |A|^2}{H^2}  \, \langle \omega, \nu \rangle.
\ee

\end{lem}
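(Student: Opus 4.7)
The plan is a direct calculation in a local orthonormal frame $\{\mathbf{e_i}\}$ on $M_t$, using the frame identities \eqref{eqn-lof}, the Codazzi equation, and the evolution $\partial_t \nu = H^{-2}\nabla H$ from item (3) of Lemma \ref{lem-HI1}. Since $\omega$ is a constant vector in $\mathbb{R}^{n+1}$, all derivatives of $\langle \omega, \nu\rangle$ land on $\nu$ and on the frame vectors, which is what makes the computation short.

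First, I would compute the time derivative. Using $\partial_t \nu = H^{-2}\nabla H$, one gets
\[
\frac{\partial}{\partial t}\langle \omega, \nu\rangle = \frac{1}{H^2}\,\langle \omega, \nabla H\rangle,
\]
where $\nabla H$ is interpreted as the tangential gradient on $M_t$.

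Next, I would compute the Laplacian. Using $\nabla_i \nu = h_{ij}\mathbf{e_j}$, the first derivative is
\[
\nabla_i \langle \omega, \nu\rangle = h_{ij}\,\langle \omega, \mathbf{e_j}\rangle.
\]
Differentiating again and using $\nabla_i \mathbf{e_j} = -h_{ij}\nu$ (the relevant identity in \eqref{eqn-lof}) gives
\[
\nabla_i \nabla_j \langle \omega, \nu\rangle = \nabla_i h_{jk}\,\langle \omega, \mathbf{e_k}\rangle - h_{ik}h_{jk}\,\langle \omega, \nu\rangle.
\]
Tracing over $i=j$ and applying the Codazzi identity $\nabla_i h_{ik} = \nabla_k H$, one obtains
\[
\Delta \langle \omega, \nu\rangle = \langle \omega, \nabla H\rangle - |A|^2\,\langle \omega, \nu\rangle.
\]

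Combining the two computations yields
\[
\Bigl(\frac{\partial}{\partial t} - \frac{1}{H^2}\Delta\Bigr)\langle \omega, \nu\rangle = \frac{1}{H^2}\langle \omega, \nabla H\rangle - \frac{1}{H^2}\langle \omega, \nabla H\rangle + \frac{|A|^2}{H^2}\langle \omega, \nu\rangle = \frac{|A|^2}{H^2}\,\langle \omega, \nu\rangle,
\]
which is \eqref{eqn-omnu}. There is no real obstacle here; the only thing to be careful about is the sign conventions in the frame identities \eqref{eqn-lof} and the correct placement of the covariant versus ambient derivative when differentiating $\langle \omega, \mathbf{e_j}\rangle$, since $\omega$ has no tangential covariant derivative on $M_t$ beyond what comes from projecting onto the frame.
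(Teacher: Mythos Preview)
Your proof is correct and follows essentially the same approach as the paper: compute $\partial_t\langle\omega,\nu\rangle$ via $\partial_t\nu=H^{-2}\nabla H$, compute $\Delta\langle\omega,\nu\rangle$ from $\nabla_i\langle\omega,\nu\rangle=h_{ik}\langle\omega,\mathbf{e_k}\rangle$, and subtract. The paper's version is more terse (it writes $\Delta\langle\omega,\nu\rangle=\nabla_i(h_{ik}\langle\omega,\mathbf{e_k}\rangle)$ and expands in one line), whereas you spell out the Hessian and invoke Codazzi explicitly, but the content is identical.
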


\begin{proof}
We have
$$\frac{\partial}{\partial t}  \langle \omega, \nu \rangle =  \langle \omega, \frac{\partial}{\partial t}  \nu \rangle
= \frac 1{H^2} \,  \langle \omega, \nabla H \rangle.$$
On the other hand
$$\frac 1{H^2} \,  \La  \, \langle \omega, \nu \rangle = \frac 1{H^2} \,  \nabla_i \big  ( h_{ik}  \, \langle \omega, {\bf e_k} \rangle \big )=\frac 1{H^2}  \langle \omega, \na H\rangle  - \frac{ |A|^2}{H^2}  \, \langle w, \nu \rangle.$$
Hence, \eqref{eqn-omnu} holds.

\end{proof}

\begin{lem}\label{lem-Homnu} For a graph solution $M_t$,  the function  $\varphi := -H  \omn >0$ satisfies 
\be \label{eqn-ultrafast2}
\EE   \varphi  = - \frac 2{H^3} \nabla H \cdot  \nabla \varphi.
\ee

\end{lem}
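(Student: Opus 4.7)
The plan is to obtain \eqref{eqn-ultrafast2} by combining the evolution equations for $H$ from Lemma \ref{lem-HI1}(5) and for $\omn$ from Lemma \ref{lem-omnu}, using the product rule for the parabolic operator $\EE$. The point is that when one computes $\EE(H\omn)$, the pointwise reaction terms cancel and the remainder collapses into a pure drift expression in $\nabla\varphi$.

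First, I would record the identity
\[
\EE(fg) = g\,\EE f + f\,\EE g - \frac{2}{H^2}\,\nabla f \cdot \nabla g,
\]
which is a direct consequence of $\partial_t(fg) = g\,\partial_t f + f\,\partial_t g$ and $\Delta(fg) = g\,\Delta f + f\,\Delta g + 2\nabla f\cdot\nabla g$. Applying this with $f=H$ and $g=\omn$, I would substitute
\[
\EE H = -\frac{2}{H^3}|\nabla H|^2 - \frac{|A|^2}{H}, \qquad \EE\omn = \frac{|A|^2}{H^2}\,\omn,
\]
the first coming from Lemma \ref{lem-HI1}(5) after rearrangement and the second being Lemma \ref{lem-omnu}.

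The key observation is the cancellation of the zeroth-order terms: the contribution $\omn\cdot\bigl(-|A|^2/H\bigr)$ from $\EE H$ cancels exactly with $H\cdot\bigl(|A|^2/H^2\bigr)\omn$ from $\EE\omn$. What survives is
\[
\EE(H\omn) = -\frac{2\,\omn}{H^3}|\nabla H|^2 - \frac{2}{H^2}\,\nabla H \cdot \nabla\omn = -\frac{2}{H^3}\,\nabla H \cdot \bigl(\omn\,\nabla H + H\,\nabla\omn\bigr),
\]
and recognizing the parenthesis as $\nabla(H\omn)$ gives $\EE(H\omn) = -\tfrac{2}{H^3}\nabla H\cdot\nabla(H\omn)$. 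Multiplying by $-1$ and using $\varphi = -H\omn$ yields \eqref{eqn-ultrafast2}. Finally, the positivity $\varphi>0$ is immediate since $H>0$ by assumption and $\omn<0$ because $M_t$ is a graph in the direction $\omega$.

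There is no real obstacle here; the only mild subtlety is keeping track of signs and the placement of the cross term $-\tfrac{2}{H^2}\nabla H\cdot\nabla\omn$ from the product rule for $\Delta$, which is precisely what conspires with $\omn\nabla H$ to reconstitute $\nabla(H\omn)$ and produce the transport form on the right-hand side.
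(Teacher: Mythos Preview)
Your proof is correct and follows exactly the approach indicated in the paper, which simply states that the result ``readily follows by combining the evolution of $H$ and \eqref{eqn-omnu}.'' You have carefully spelled out the product-rule computation and the cancellation of the $|A|^2$ terms that the paper leaves implicit.
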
 

\begin{proof} Readily follows by combining the evolution of  $H$  and \eqref{eqn-omnu}. 
\end{proof}

\begin{lem}\label{lem-uuu} For a graph solution $M_t$, the height function $u:= \Fom$ satisfies the evolution equation
\be\label{eqn-uuu}
\EE u = \frac 2{H} \, \omn.
\ee
\end{lem}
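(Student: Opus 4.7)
The plan is to compute the two ingredients of the operator $\bigl(\pp_t - H^{-2}\Delta\bigr)$ applied to $u=\Fom$ separately, using that $\omega$ is a fixed vector in $\R^{n+1}$ and that $F$ evolves in the normal direction under \eqref{eqn-IMCF}.

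For the time derivative, I would simply differentiate $u = \Fom$ and use the flow equation $F_t = H^{-1}\nu$ (valid for the geometric parametrization of \eqref{eqn-IMCF}) together with the fact that $\omega$ is $t$-independent, giving
\[
\frac{\pp}{\pp t} u \;=\; \langle F_t, \omega\rangle \;=\; \frac{1}{H}\,\omn.
\]

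For the Laplacian, I would work in a local orthonormal frame $\{\eei\}$ on $M_t$ as in \eqref{eqn-lof}. The tangential gradient of $u$ is $\nabla_i u = \langle \eei,\omega\rangle$, since $\omega$ is parallel in $\R^{n+1}$. Taking a further covariant derivative at a point where $\nabla_{\eei}\eej = 0$ tangentially, the only contribution comes from the normal piece $\D_{\eei}\eei = -H\,\nu$ supplied by \eqref{eqn-lof}. Tracing in $i$ yields
\[
\La u \;=\; \sum_i \langle \D_{\eei}\eei,\omega\rangle \;=\; -H\,\omn.
\]

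Combining the two computations,
\[
\EE u \;=\; \frac{\omn}{H} \;-\; \frac{1}{H^{2}}\,(-H\,\omn) \;=\; \frac{2}{H}\,\omn,
\]
which is \eqref{eqn-uuu}. There is no real obstacle here: the identity is a direct consequence of the geometric formulas \eqref{eqn-lof} and the flow equation, and all terms involving $\nabla H$ or $|A|^2$ that appear for other geometric quantities are absent because $\omega$ is parallel and $u$ is linear in $F$.
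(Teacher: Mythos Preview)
Your proof is correct and follows essentially the same approach as the paper: the paper's proof consists precisely of the two identities $\pp_t\Fom = H^{-1}\omn$ and $\La\Fom = \D_i\langle\eei,\om\rangle = -H\,\omn$, which are exactly the computations you carry out.
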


\begin{proof}
It follows from 
$$\frac{\pp}{\pp t} \Fom = \frac 1{H} \, \omn \qquad \mbox{and} \qquad 
 \La \Fom = \D_i \langle \eei, \om \rangle = - H \, \omn.$$
\end{proof}

We next consider the quantity 
$$\Fhn:=  - \Fom \, \omn$$
which will play a crucial role in this work. 
We will  assume that our origin $0 \in \R^{n+1}$ is chosen so that $\Fom >0$ (in particular this holds if $M_t$ lies
above the cone $x_{n+1}= \al(t) \,|x|$ as in the picture \eqref{picture1}).  Since $\omn <0$, we have  $\Fhn > 0$ on $M_t$ for all $0 \leq t <\tau$. 

\begin{lem}\label{lem-fhatnu}
The  quantity $\Fhn:= - \Fom \, \omn >0$ satisfies the equation
\be\label{eqn-fhatnu}
\EE \Fhn =  \frac{|A|^2}{ H^2} \,\Fhn -   \frac 2{H} \, \omn^2+ \frac {h_{ij}}{H^2}\,  \langle \eei, \om \rangle \, \langle \eej, \om \rangle.
\ee
\end{lem}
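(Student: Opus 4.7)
My plan is to derive the evolution equation for $\Fhn = -u\,\omn$ (with $u:=\Fom$) by applying the product rule for the operator $\EE = \pp/\pp t - H^{-2}\,\La$ to the two factors whose evolutions are already recorded in Lemmas \ref{lem-omnu} and \ref{lem-uuu}. Concretely, for any two smooth scalar functions $f,g$ on $M_t$ one has
\begin{equation*}
\EE(fg) \;=\; f\,\EE g \;+\; g\,\EE f \;-\; \frac{2}{H^{2}}\,\na f\cdot\na g,
\end{equation*}
and I would apply this with $f=-u$, $g=\omn$ so that $fg=\Fhn$.

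The first two terms are then immediate. Lemma \ref{lem-uuu} gives $\EE u = (2/H)\,\omn$, hence $\omn\,\EE(-u) = -(2/H)\,\omn^{2}$, which is the second term in the claimed formula. Lemma \ref{lem-omnu} gives $\EE\omn = (|A|^{2}/H^{2})\,\omn$, so $(-u)\,\EE\omn = (|A|^{2}/H^{2})(-u\,\omn) = (|A|^{2}/H^{2})\,\Fhn$, producing the first term.

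The only genuine computation is the cross term $\na u\cdot\na\omn$, where I use the identities \eqref{eqn-lof} in a local orthonormal frame $\{\eei\}$. Since $F$ is the position vector with $\na_i F = \eei$, one gets $\na_i u = \na_i \Fom = \langle \eei,\om\rangle$. On the other hand, treating $\om$ as a constant ambient vector and differentiating via the Weingarten relation $\D_{\eei}\nu = h_{ij}\eej$ yields $\na_i\omn = \langle\om,\D_{\eei}\nu\rangle = h_{ij}\,\langle\eej,\om\rangle$. Contracting with the (orthonormal) metric produces $\na u\cdot\na\omn = h_{ij}\,\langle\eei,\om\rangle\langle\eej,\om\rangle$, and the $-(2/H^{2})\na(-u)\cdot\na\omn = (2/H^{2})\na u\cdot\na\omn$ piece contributes the last term of \eqref{eqn-fhatnu}.

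There is no real obstacle here beyond careful sign bookkeeping and correct use of the Weingarten identity; the result follows by assembling the three pieces. The most error-prone step is tracking the sign in the product-rule correction and in the definition $\Fhn = -u\,\omn$ (recall $\omn<0$ for a downward graph so that $\Fhn>0$), which is why I would write $\Fhn$ as $(-u)\cdot\omn$ throughout and only reintroduce the minus sign at the end.
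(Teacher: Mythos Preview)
Your approach is identical to the paper's: apply the product rule for $\EE$ to $(-u)\cdot\omn$, invoke Lemmas~\ref{lem-omnu} and~\ref{lem-uuu} for the two factors, and compute the cross term $\nabla u\cdot\nabla\omn = h_{ij}\langle\eei,\om\rangle\langle\eej,\om\rangle$ via \eqref{eqn-lof}.

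One point of bookkeeping you glossed over: your own (correct) product rule gives the cross term as $\tfrac{2}{H^{2}}\,\nabla u\cdot\nabla\omn = \tfrac{2}{H^{2}}\,h_{ij}\langle\eei,\om\rangle\langle\eej,\om\rangle$, with a factor of~$2$ coming from $\Delta(fg)=f\Delta g+g\Delta f+2\nabla f\cdot\nabla g$. The stated formula \eqref{eqn-fhatnu} (and the paper's own one-line derivation) records the coefficient as~$1$, so your computation does not literally reproduce the last term as written. This appears to be a harmless slip in the paper rather than in your argument: the $h_{ij}$ term is only ever used through its sign (it is dropped under the convexity hypothesis in Lemma~\ref{cor-vinverse1} and in the energy estimate of Lemma~\ref{lem-energy}), so the constant in front is immaterial downstream. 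Still, you should flag the discrepancy rather than assert that your $2/H^{2}$ term ``contributes the last term of \eqref{eqn-fhatnu}'' without comment.
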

\begin{proof}
Using  the evolution equations for  $\Fom$ and $\omn$ shown in 
lemmas   \ref{lem-omnu} and \ref{lem-uuu} respectively, we    conclude that 
$$\EE \Fhn =  \frac{|A|^2}{ H^2} \,\Fhn -   \frac 2{H} \, \omn^2+ \frac 1{H^2} \D_i \Fom \D_i \omn.$$
Since
$$\D_i \Fom \D_i \omn = h_{ij} \, \langle \eei, \om \rangle \, \langle \eej, \om \rangle$$
the above readily yields \eqref{eqn-fhatnu}.

\end{proof}

\begin{lem}\label{lem-fhatnuH} The product $v:= \Fhn \, H$ satisfies the evolution equation 
\be\label{eqn-fhatnuH}
 \frac{\pp}{\pp t} v - \D_i \big ( \frac 1{H^2} \D_i v\big ) =   -  2\,  \omn^2+ 
\frac {h_{ij}}{H}\,  \langle \eei, \om \rangle \, \langle \eej, \om \rangle. 
\ee
\end{lem}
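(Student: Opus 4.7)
The plan is to compute $\frac{\partial v}{\partial t} - \frac{1}{H^2}\Delta v$ first by a product rule, then convert to the divergence form $\D_i\bigl(\frac{1}{H^2}\D_i v\bigr)$ at the end. Writing $\mathcal{E} := \bigl(\frac{\partial}{\partial t} - \frac{1}{H^2}\Delta\bigr)$, the basic identity I would use is the product rule
\[
\mathcal{E}(\phi H) = \phi\,\mathcal{E}H + H\,\mathcal{E}\phi - \frac{2}{H^2}\nabla\phi\cdot\nabla H,
\]
which follows from $\Delta(\phi H) = \phi\Delta H + H\Delta\phi + 2\nabla\phi\cdot\nabla H$.

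Applied with $\phi = \Fhn$, I would plug in the evolutions already established: from Lemma \ref{lem-HI1}(5), $\mathcal{E}H = -\frac{2}{H^3}|\nabla H|^2 - \frac{|A|^2}{H}$, and from Lemma \ref{lem-fhatnu}, $\mathcal{E}\Fhn = \frac{|A|^2}{H^2}\Fhn - \frac{2}{H}\omn^2 + \frac{h_{ij}}{H^2}\langle\eei,\om\rangle\langle\eej,\om\rangle$. The crucial algebraic observation is that the two $|A|^2\Fhn/H$ contributions cancel exactly, leaving
\[
\mathcal{E}v = -\frac{2}{H^3}\Fhn\,|\nabla H|^2 - 2\omn^2 + \frac{h_{ij}}{H}\langle\eei,\om\rangle\langle\eej,\om\rangle - \frac{2}{H^2}\nabla\Fhn\cdot\nabla H.
\]

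To pass to divergence form, I would use $\nabla v = H\nabla\Fhn + \Fhn\nabla H$ to eliminate $\nabla\Fhn$ in favor of $\nabla v$, obtaining
\[
\frac{2}{H^2}\nabla\Fhn\cdot\nabla H = \frac{2}{H^3}\nabla v\cdot\nabla H - \frac{2}{H^3}\Fhn\,|\nabla H|^2,
\]
so the $|\nabla H|^2$ terms also cancel, and the remaining drift $-\frac{2}{H^3}\nabla v\cdot\nabla H$ is exactly what is needed to convert $\frac{1}{H^2}\Delta v$ into the divergence form $\D_i\bigl(\frac{1}{H^2}\D_i v\bigr)$ via the identity $\D_i\bigl(\frac{1}{H^2}\D_i v\bigr) = \frac{1}{H^2}\Delta v - \frac{2}{H^3}\nabla H\cdot\nabla v$. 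Combining everything yields \eqref{eqn-fhatnuH}.

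There is no real obstacle here — the statement is a direct consequence of the product rule combined with Lemmas \ref{lem-HI1} and \ref{lem-fhatnu}. The only thing one must watch carefully is bookkeeping the sign of each $|\nabla H|^2$, $|A|^2\Fhn$, and drift contribution so the two cancellations (the $|A|^2$ one that comes from summing $\mathcal{E}H$ and $\mathcal{E}\Fhn$, and the $|\nabla H|^2$ one that appears after converting $\nabla\Fhn$ to $\nabla v$) work out, and to use the correct sign convention $\D_i(H^{-2}\D_i v) = H^{-2}\Delta v - 2H^{-3}\D_i H\,\D_i v$ when moving from non-divergence to divergence form.
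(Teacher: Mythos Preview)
Your proof is correct and follows essentially the same approach as the paper: combine the evolution equations for $H$ and $\Fhn$ via the product rule for $\mathcal{E}$, observe the $|A|^2$ cancellation, rewrite the gradient terms using $\nabla v = H\nabla\Fhn + \Fhn\nabla H$ so that only the drift $-\tfrac{2}{H^3}\nabla H\cdot\nabla v$ survives, and absorb this drift into the divergence form. The paper's proof is simply a terser version of exactly these steps.
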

\begin{proof}
Combining the evolution equation of $H$  given in Lemma \ref{lem-HI1} 
with \eqref{eqn-fhatnu}, gives
\begin{equation*}
\begin{split}
\EE v &= - \frac 2{H^2} \nabla \Fhn \nabla H -  \frac {2}{H^3} \,  \Fhn\, |\nabla H|^2 -  2\,  \omn^2+ \frac {h_{ij}}{H}\,  \langle \eei, \om \rangle \, \langle \eej, \om \rangle\\
& = - \frac 2{H^3} \nabla v \nabla H -  2\,  \omn^2+ \frac {h_{ij}}{H}\,  \langle \eei, \om \rangle \, \langle \eej, \om \rangle
\end{split}
\end{equation*}
from which \eqref{eqn-fhatnuH} readily follows. 
\end{proof}

\begin{lem}\label{cor-vinverse1}
Under the additional assumption that $M_t$ is convex, 
the function $v^{-1}:= (\Fhn \, H)^{-1}$ satisfies 
\be\label{eqn-vinverse1}
\frac{\pp}{\pp t} v^{-1} - \D_i \big ( \frac 1{H^2} \D_i v^{-1} \big ) \leq   -  \frac 2{H^2 v^{-1}}  |\nabla v^{-1}|^2 +
 2\,  \omn^2 v^{-2}.  
\ee

\end{lem}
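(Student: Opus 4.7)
The plan is a direct chain-rule computation from Lemma \ref{lem-fhatnuH}, with convexity entering only to fix the sign of one term. No additional tools are required.

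Concretely, setting $\psi(v)=v^{-1}$, I would combine
\[
\frac{\pp}{\pp t} v^{-1} = -v^{-2}\, \frac{\pp}{\pp t} v,
\qquad
\D_i\bigl(\tfrac{1}{H^2}\D_i v^{-1}\bigr) = -v^{-2}\, \D_i\bigl(\tfrac{1}{H^2}\D_i v\bigr) + \frac{2v^{-3}}{H^2}|\nabla v|^2,
\]
to obtain
\[
\frac{\pp}{\pp t} v^{-1} - \D_i\bigl(\tfrac{1}{H^2}\D_i v^{-1}\bigr) = -v^{-2}\Bigl[\frac{\pp}{\pp t} v - \D_i\bigl(\tfrac{1}{H^2}\D_i v\bigr)\Bigr] - \frac{2v^{-3}}{H^2}|\nabla v|^2.
\]
Substituting the right-hand side from Lemma \ref{lem-fhatnuH} turns the bracketed expression into $-2\,\omn^2 + H^{-1}\, h_{ij}\langle\eei,\om\rangle\langle\eej,\om\rangle$, and the identity $\nabla v = -v^{2}\nabla v^{-1}$ rewrites the remaining gradient contribution as $-\frac{2}{H^2 v^{-1}}|\nabla v^{-1}|^2$, which is exactly the gradient term on the right-hand side of the claim.

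Convexity enters at the final step: since $M_t$ is convex, the second fundamental form $h_{ij}$ is positive semidefinite, so
\[
\frac{h_{ij}}{H}\,\langle\eei,\om\rangle\langle\eej,\om\rangle \ \geq\ 0,
\]
and the corresponding term $-\frac{v^{-2}}{H}\, h_{ij}\langle\eei,\om\rangle\langle\eej,\om\rangle$ that remains after substitution is nonpositive; dropping it produces the stated upper bound. There is no real obstacle in the argument: the only nontrivial geometric input is the one-sign observation used to discard the cross term, and the rest is algebraic bookkeeping with the chain rule.
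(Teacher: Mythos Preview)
Your proposal is correct and matches the paper's argument essentially step for step: both compute the chain rule for $v^{-1}$, substitute the evolution of $v$ from Lemma~\ref{lem-fhatnuH}, rewrite the gradient term via $\nabla v = -v^{2}\nabla v^{-1}$, and then drop the nonpositive $-\tfrac{v^{-2}}{H}h_{ij}\langle\eei,\om\rangle\langle\eej,\om\rangle$ term using convexity. The only cosmetic difference is that you stay in divergence form throughout, whereas the paper carries out the computation with the operator $\EE=\partial_t-\tfrac{1}{H^2}\La$ (picking up an extra $-\tfrac{2}{H^3}\nabla v^{-1}\nabla H$ term) and converts to divergence form at the end; the two routes are equivalent.
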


\begin{proof}
Let $v:=\Fhn \, H$ as in Lemma \ref{lem-fhatnuH}.  We have 
\begin{equation*}
\begin{split}
\EE v^{-1} &= - \frac 1{v^2} \EE v  - \frac 2{H^2 v^3}  |\nabla v|^2   \\
&=  - \frac 2{H^3} \nabla v^{-1} \nabla H  -  \frac 2{H^2 v^3}  |\nabla v|^2 +
 2\,  \omn^2 v^{-2} - \frac {h_{ij}}{H}\,  \langle \eei, \om \rangle \, \langle \eej, \om \rangle  \, v^{-2}
\end{split}
\end{equation*}
which implies  \eqref{eqn-vinverse1} since $h_{ij} \,  \langle \eei, \om \rangle \, \langle \eej, \om \rangle\geq 0$ by
convexity. 

\end{proof}

Throughout this paper we will make use of the comparison principle in our non-compact setting. Although  rather standard under our assumptions,  for the convenience of the reader we will show next a proposition  which justifies this. The assumptions  are made so that it is  applicable in our setting. 

\begin{prop}[Comparison principle]\label{prop-comp}
Assume that $f \in C^2 (\R^n \times (0,\tau)) \cap C^0 (\R^n \times (0,\tau))$ satisfies the linear
parabolic inequality
$$f_t \leq a_{ij} \, D_{ij} f + b_i \, D_i f + c\, f, \qquad \mbox{on} \,\,  \R^n \times (0,\tau)$$
for some $\tau >0$ with coefficients which are measurable functions and satisfy the bounds 
\be\label{eqn-coeff1}
 \lambda |\xi|^2 \leq a_{ij}(x,t)\,   \xi_i \xi_j  \leq \Lambda |\xi|^2 \, (|x|^2+1), \qquad (x,t) \in \R^n \times [0,\tau], \,\, \xi \in \R^n 
 \ee
and
\be\label{eqn-coeff2}
 |b_i (x,t)| \leq \Lambda\, (|x|^2+1)^{1/2}, \qquad |c(x,t) | \leq \Lambda, \qquad (x,t) \in \R^n \times [0,\tau]
\ee
for some  constants $ 0 < \lambda <  \Lambda < +\infty$. Assume in addition that the solution $f$ satisfies the polynomial 
growth upper bound
$$f(x,t) \leq C\, (|x|^2+1)^p, \qquad \mbox{on} \,\,  \R^n \times [0,\tau]$$
for some $p >0$. 
If $f(\cdot,0) \leq  0$ on $\R^n$, then $f \leq 0$ on $\R^n \times [0,\tau]$.

\end{prop}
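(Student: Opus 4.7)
The plan is a classical Phragm\'en--Lindel\"of / Tychonoff style barrier argument, adapted to the polynomial coefficient growth allowed by \eqref{eqn-coeff1}--\eqref{eqn-coeff2}. I would proceed in four steps.

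First, I would reduce to the case of nonpositive zeroth order coefficient by setting $\tilde f := e^{-\Lambda t} f$. The transformed function $\tilde f$ satisfies the same type of inequality but with $c$ replaced by $c-\Lambda\leq 0$, and it still satisfies the same polynomial upper bound (with the same exponent $p$).

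Second, I would construct a global strict supersolution of the form
\[
\phi(x,t) := e^{Mt}\,(|x|^2 + A)^{q},
\]
with $q > p$ and $A \geq 1$ fixed, and $M$ large to be chosen. A direct computation using the growth estimates \eqref{eqn-coeff1}--\eqref{eqn-coeff2} together with the elementary bounds $(|x|^2+1)/(|x|^2+A)\leq 1$ and $|x|(|x|^2+1)^{1/2}/(|x|^2+A)\leq \sqrt{2}$ shows that each of $a_{ij}D_{ij}\phi$, $|b_iD_i\phi|$ and $|(c-\Lambda)\phi|$ is bounded by a constant multiple of $\phi$ with constants depending only on $n,q,\Lambda$; choosing $M$ sufficiently large therefore produces
\[
\phi_t - a_{ij}D_{ij}\phi - b_iD_i\phi - (c-\Lambda)\phi \;\geq\; \phi \;>\; 0
\]
throughout $\R^n\times[0,\tau]$.

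Third, I would apply the maximum principle to $g_\epsilon := \tilde f - \epsilon \phi$ for each $\epsilon > 0$. Since $q > p$, the difference $g_\epsilon$ tends to $-\infty$ as $|x|\to\infty$, uniformly in $t\in[0,\tau]$, so its supremum on $\R^n\times[0,\tau]$ is attained at some finite point $(x_0,t_0)$. If this supremum were strictly positive, it could not occur at $t = 0$ (since $\tilde f(\cdot,0)\leq 0$ and $\phi(\cdot,0)\geq 0$), so $t_0 > 0$, and at such an interior maximum one would have $\partial_t g_\epsilon\geq 0$, $D g_\epsilon = 0$, $D^2 g_\epsilon \leq 0$ and $(c-\Lambda) g_\epsilon \leq 0$; combined with the ellipticity of $a_{ij}$ this forces the operator applied to $g_\epsilon$ to be $\geq 0$, contradicting the strict negative bound $-\epsilon\phi$ inherited from the previous step. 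Hence $g_\epsilon \leq 0$ everywhere, i.e.\ $\tilde f\leq \epsilon \phi$; letting $\epsilon\to 0$ pointwise then yields $\tilde f\leq 0$ and therefore $f\leq 0$.

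The main point requiring care is the barrier construction. Because the diffusion coefficients $a_{ij}$ are allowed to grow like $|x|^2 + 1$, the Gaussian barrier $\exp(\alpha|x|^2)$ used in the usual Tychonoff uniqueness argument for the heat equation is no longer a supersolution of the admissible class of operators. A polynomial barrier $(|x|^2+A)^q$ is the natural substitute precisely because each term produced by $a_{ij}D_{ij}+b_iD_i+c$ scales as a bounded multiple of $\phi$ itself. The hypothesis that $f$ has polynomial growth of some fixed order $p$ enters in exactly one way: it allows the choice $q > p$ which makes $g_\epsilon$ decay to $-\infty$ at spatial infinity and hence permits the argument to localize the supremum. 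A minor technical point is that one should first carry out the argument on $\R^n\times[0,\tau-\delta]$ and send $\delta\downarrow 0$, to avoid having to invoke $C^2$-regularity up to $t=\tau$.
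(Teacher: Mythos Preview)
Your proof is correct and follows essentially the same approach as the paper: both construct a polynomial barrier of the form $e^{\theta t}(|x|^2+\text{const})^q$ with $q>p$, verify it is a strict supersolution using the coefficient bounds, and then compare via $f-\epsilon\varphi$ and let $\epsilon\to 0$. Your preliminary substitution $\tilde f=e^{-\Lambda t}f$ to force $c-\Lambda\le 0$ is a minor cosmetic variation that streamlines the sign check at the interior maximum, but otherwise the arguments coincide.
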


\begin{proof} To justify the application of the maximum principle it is sufficient to construct an appropriate  supersolution $\varphi$ of
our equation. We look for such a supersolution in the form
$$\varphi(x,t) = e^{\theta t} \, (|x|^2+1)^q$$
for some exponent $q >p$ and   $\theta=\theta (\Lambda, q) >0$ to be determined in the sequel. Defining the operator 
$$L \varphi := a_{ij} \, D_{ij} \varphi  + b_i \, D_i \varphi  + c\, \varphi$$
a direct calculation shows that under the assumptions on our coefficients we have 
$$\varphi_t - L \varphi  \geq  \big ( \theta  - C(\Lambda,q) \big ) \, \varphi$$
for some constant $C(\Lambda,q)$ depending only on $\Lambda, q$ and the dimension $n$. 
Hence, by choosing $\theta := 2 C(\Lambda,q)$ we conclude that $\varphi$ satisfies the inequality 
$$\varphi_t - L \varphi  > 0.$$
Now, setting $\varphi_\epsilon := \epsilon \, \varphi$, we have 
$$ f(x,t) \leq \varphi_\epsilon(x,t), \qquad \mbox{for} \,\, |x| \geq R_\epsilon, \,\, 0 \leq t \leq  \tau$$
for $R_\e >>1$, since we have taken $q >p$. Since, $f_\e \leq 0  \leq \varphi_\e$ by assumption, the maximum principle  guarantees that 
$f \leq \varphi_\epsilon$ on $\R^n \times (0,\tau)$ and by letting $\epsilon \to 0$
we conclude that $f \leq 0$ on on $\R^n \times [0,\tau]$ as stated in our proposition.
\end{proof}

We  will establish next, using the maximum principle,   local and global $L^\infty$ bounds  
from above  on the mean curvature of our solution $M_t$. Since our solution is convex these imply local and global bounds
on the second fundamental form.   We begin with the local bound. 
For the fixed  point $\bbx_0 \in \R^{n+1}$ and number $r >1$, we  consider the cut off function  
$$\eta:=  (r^2 - |F-\bbx_0|^2)_+^2.$$

\begin{prop}[Local bound from above on H] \label{prop-Hloc-above}
For a solution $M_t$ of \eqref{eqn-IMCF} on $t \in [0, \tau]$, $\tau >0$, if  $ \sup_{M_0} \, \eta (F(\cdot,0)) \,   H(\cdot,0) \leq C_0$, then 
\be\label{eqn-bHab}
\sup_{M_t} \, \eta (F(\cdot,t)) \,   H(\cdot,t) \leq  \max (C_0, 2n \, r^3).
\ee
\end{prop}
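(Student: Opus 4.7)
The plan is to run the standard cut-off maximum principle argument on the function $G := \eta\, H$, the product of the compactly supported weight $\eta = (r^2 - |F-\bar x_0|^2)_+^2$ with the mean curvature. Writing $\rho := |F-\bar x_0|^2$ and $u := (r^2-\rho)_+$ so that $\eta = u^2$, I first compute the evolution of $G$. The evolution of $H$ is given in Lemma \ref{lem-HI1}, and applying identity \eqref{eqn-evolF} with $F$ replaced by the translate $F-\bar x_0$ yields $\EE \rho = -2n/H^2 + (4/H)\langle F-\bar x_0,\nu\rangle$. Combining these via the chain rule for $\eta = u^2$ and the product rule for $G = \eta H$ produces
\begin{equation*}
\EE G = \frac{4nu}{H} - 8u\,\langle F-\bar x_0,\nu\rangle - \frac{2|\nabla \rho|^2}{H} - \frac{2\eta\,|\nabla H|^2}{H^3} - \frac{\eta\,|A|^2}{H} - \frac{2}{H^2}\,\nabla\eta \cdot \nabla H.
\end{equation*}

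The key step is a clean algebraic cancellation at a critical point of $G$. Because $\operatorname{supp}\eta$ is compact in $\R^{n+1}$ and $G$ vanishes on its boundary, the spatial supremum of $G$ on $M_t$ is attained at some interior point of $\operatorname{supp}\eta$, where $\nabla G = 0$ gives $\nabla H = -(H/\eta)\nabla \eta$, hence $|\nabla H|^2 = (H^2/\eta^2)|\nabla\eta|^2$ and $\nabla\eta\cdot\nabla H = -(H/\eta)|\nabla\eta|^2$. Substituting shows that the two gradient-containing contributions $-2\eta|\nabla H|^2/H^3$ and $-(2/H^2)\nabla\eta\cdot\nabla H$ cancel exactly, leaving the simplified identity
\begin{equation*}
\EE G \;=\; \frac{4nu}{H} - 8u\,\langle F-\bar x_0,\nu\rangle - \frac{2|\nabla \rho|^2}{H} - \frac{\eta\,|A|^2}{H}
\end{equation*}
at any interior spatial critical point of $G$.

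I would then invoke the parabolic maximum principle (justified on our non-compact $M_t$ by Proposition \ref{prop-comp} applied to the cut-off region, since $\eta$ has compact support). If the supremum of $G$ over $M_t\times[0,\tau]$ is realized at $t=0$, the hypothesis gives $G \le C_0$. Otherwise it is attained at some $(x_0, t_0)$ with $t_0 > 0$, where $G_t \ge 0$ and $\Delta G \le 0$ force $\EE G \ge 0$. Dropping the nonpositive term $-2|\nabla\rho|^2/H$ and using $|A|^2 \ge H^2/n$ yields
\begin{equation*}
\frac{\eta\, H}{n} \;\le\; \frac{4nu}{H} + 8u\,|\langle F-\bar x_0,\nu\rangle|.
\end{equation*}
On $\operatorname{supp}\eta$ one has $|\langle F-\bar x_0,\nu\rangle|\le r$ and $u\le r^2$; multiplying by $H$ and substituting $\eta = u^2$ gives a quadratic inequality in $H$ which, solved and combined with $u\le r^2$, yields the pointwise bound $G(x_0,t_0) \le 2n r^3$. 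Combining the two scenarios produces the claimed estimate \eqref{eqn-bHab}.

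The main delicacy is the cancellation of the gradient terms at the critical point, which relies crucially on the precise form of the ultra-fast-diffusion evolution of $H$ under IMCF; the remainder is elementary algebra, with the constant $2n r^3$ emerging from $|A|^2 \ge H^2/n$ together with the apriori bounds on $u$ and on the support function $\langle F-\bar x_0,\nu\rangle$ inside $\operatorname{supp}\eta$. A small additional point is that $\eta$ is only $C^1$ across $\{|F-\bar x_0|=r\}$, but this is harmless since $G=0$ there.
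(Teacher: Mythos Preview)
Your approach is essentially the same as the paper's: both compute the evolution of $\varphi:=\eta H$ and apply the maximum principle on the compactly supported function. Your observation that the two gradient terms cancel exactly at a spatial critical point is equivalent to the paper's device of rewriting them as $-\tfrac{2}{H^3}\nabla H\cdot\nabla\varphi$, which vanishes at the maximum anyway. The reference to Proposition~\ref{prop-comp} is unnecessary here, since $\eta$ has compact support and the spatial maximum is automatically attained.

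There is, however, a genuine gap in your final algebra. From your inequality at the maximum,
\[
\frac{\eta H}{n}\le \frac{4nu}{H}+8u\,|\langle F-\bar x_0,\nu\rangle|\le \frac{4nu}{H}+8r^3,
\]
multiplying by $nH$ and using $\eta=u^2$, $u\le r^2$ gives $G^2\le 4n^2r^6+8nr^3G$, whose positive root is $G\le (4+2\sqrt5)\,nr^3$, not $2nr^3$. So your argument as written does \emph{not} produce the constant in \eqref{eqn-bHab}; it only yields a bound of the form $C(n)r^3$.

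The paper obtains exactly $2nr^3$ because in passing to the final differential inequality for $\varphi$ it drops the support-function contribution $-8\eta^{1/2}\langle F-\bar x_0,\nu\rangle$ altogether, leaving only $\tfrac{4n}{\varphi}\eta^{3/2}-\tfrac{\varphi}{n}$ on the right-hand side; then $\eta\le r^4$ gives $m(t)^2\le 4n^2r^6$ directly. If you want to match the stated constant you must likewise discard that term (or argue it has a favorable sign in your setting); otherwise your more careful bookkeeping forces the larger constant above.
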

\begin{proof} 
We work on a local orthonormal frame $\{ \eei \}_{1 \leq i \leq n}$ on $M_t$ where identities \eqref{eqn-lof} hold. We   have
$$\nabla_i \eta = - 4 \, (r^2 - |F-\bbx_0|^2)_+ \, \langle F-\bbx_0, \eei \rangle = - 4 \eta^{1/2} \, \langle F-\bbx_0, \eei \rangle $$
and
$$\Delta \eta =  8 |(F-\bbx_0)^T|^2 - 4n \,  \eta^{1/2} + 4 \eta^{1/2}  \langle F-\bbx_0, \nu \rangle \, H$$
and
$$\frac{\partial \eta  }{\partial t} = - 4  \eta^{1/2} \,  \langle F-\bbx_0, F_t \rangle  
= - 4  \eta^{1/2} \,  \langle F-\bbx_0, \frac 1H \, \nu  \rangle = - 4  \eta^{1/2} \, \frac 1H \,  \langle F-\bbx_0, \nu \rangle.$$
We recall the $H$ evolves by the equation 
$$ \frac{\partial}{\partial t} H  =  \frac 1{H^2} \Delta H  -  \frac 2{H^3} |\nabla H|^2 -     \frac{|A|^2}{H}.$$
Using also  the  bound $|A|^2/H \geq H/n$, it follows that 
$$
\frac{\partial (\eta H ) }{\partial t}   \leq \frac 1{H^2} \eta \, \Delta H     -  \frac 2{H^3}  \,  |\nabla H|^2 \, \eta  -  
 \frac Hn \eta  -  4  \eta^{1/2} \langle F-\bbx_0, \nu \rangle.
$$
Since
$$\frac 1{H^2}  \, \Delta (\eta H) = \frac 1{H^2} \eta \, \Delta H + \frac 2{H^2} \,   \nabla_i H  \, \nabla_i \eta
+ \frac 1H\,  \Delta \eta$$
the above yields  
$$
\frac{\partial (\eta H ) }{\partial t}   \leq \frac 1{H^2} \, \Delta (\eta H)  - \frac 2{H^2} \,   \nabla_i H  \, \nabla_i \eta - \frac 8{H}\,  |(F-\bbx_0)^T|^2 +   \frac {4n}H \eta^{1/2}   -   \frac 2{H^3}  \,  |\nabla H|^2  \eta -  \frac Hn \eta.
$$
Using
$$ \frac 2{H^3}  \nabla_i H  \nabla_i (\eta H) = \frac 2{H^2} \,   \nabla_i H  \, \nabla_i \eta + 
\frac 2{H^3} \, |\nabla H|^2 \, \eta$$
we  conclude that   $\varphi:= \eta H$ satisfies
$$
\frac{\partial \varphi  }{\partial t}   \leq \frac 1{H^2} \, \Delta \varphi -  \frac 2{H^3}  \nabla_i H  \nabla_i \varphi
+   \frac {4n} \varphi  \, \eta^{3/2}   -  \frac \varphi n.
$$
For the  fixed $r >0$ and $\bbx_0 \in \R^{n+1}$, let 
$$m(t):= \max_{M_t } H \eta = \max_{M_t }  \varphi.$$
Since $\eta \leq r^4$, it  follows from the above differential inequality that $m(t)$ will decrease if 
$$ r^6 \, \frac {4n } {m(t)}  -  \frac  {m(t)} n \leq 0  \iff m^2(t) \geq 
4n^2 \, r^6  \iff m(t) \geq  2n \, r^3.
$$
Hence 
$$
m(t) \leq \max \, (\, m(0), 2n\, r^3). 
$$

\end{proof}

\begin{rem} We note that Proposition \ref{prop-Hloc-above} does not require the convexity of $M_t$. 
\end{rem}

\begin{prop}[Global bound from above on H]\label{prop-Habove} For a convex graph solution $M_t$
of \eqref{eqn-IMCF} on $t \in [0,\tau]$, if  $ \sup_{M_0}  \Fom H(\cdot,0) < \infty$, then 
\be\label{eqn-Habove}
\sup_{t \in [0,\tau]} \sup_{M_t}  \Fom H  \leq \sup_{M_0}  \Fom H.
\ee
\end{prop}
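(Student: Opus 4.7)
The plan is to establish a linear parabolic differential inequality for the quantity $w := \Fom \cdot H$ whose inhomogeneous terms are manifestly non-positive, and then invoke the non-compact comparison principle of Proposition \ref{prop-comp} to conclude that $\sup_{M_t} w$ is non-increasing in $t$.

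First I would compute the evolution of $w$ by combining the identities already in place. Writing $u := \Fom$ and expanding
\begin{equation*}
\EE (uH) \;=\; H\,\EE u \;+\; u\,\EE H \;-\; \frac{2}{H^2}\,\nabla u \cdot \nabla H,
\end{equation*}
then substituting $\EE u = 2H^{-1}\omn$ from Lemma \ref{lem-uuu} and $\EE H = -2H^{-3}|\nabla H|^2 - |A|^2/H$ from Lemma \ref{lem-HI1}(5), the two gradient contributions regroup via $\nabla w = H\nabla u + u\nabla H$ into a single drift term $-2H^{-3}\nabla w \cdot \nabla H$. This yields the clean identity
\begin{equation*}
\EE w \;=\; -\frac{2}{H^3}\,\nabla w \cdot \nabla H \;+\; 2\omn \;-\; \frac{|A|^2}{H}\,u.
\end{equation*}
Since $M_t$ is an entire graph in direction $\omega$ one has $\omn < 0$, and with the origin fixed so that $u = \Fom > 0$ (as arranged just before Lemma \ref{lem-fhatnu}), both of the extra terms on the right are non-positive. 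Dropping them gives the desired inequality
\begin{equation*}
w_t \;\leq\; \frac{1}{H^2}\,\Delta w \;-\; \frac{2}{H^3}\,\nabla w \cdot \nabla H.
\end{equation*}

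With this inequality in hand I would apply Proposition \ref{prop-comp} to $\tilde w := w - K$ with $K := \sup_{M_0} w$: by hypothesis $\tilde w(\cdot,0) \leq 0$ on $M_0$, and $\tilde w$ obeys the same linear parabolic inequality. The main obstacle is checking the hypotheses of Proposition \ref{prop-comp} in the graph parametrization over $\R^n$, namely verifying that the coefficients $H^{-2}g^{ij}$ and $-2H^{-3}\partial_i H$ satisfy the growth bounds \eqref{eqn-coeff1}--\eqref{eqn-coeff2} and that $\tilde w$ itself grows at most polynomially at spatial infinity. Here convexity is used in an essential way: it bounds the induced metric in terms of the slope of the graph on compact sets, it yields the local upper bound on $H$ through Proposition \ref{prop-Hloc-above}, and together with the smoothness of $M_t$ it keeps the operator from degenerating on bounded spatial regions. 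Once these verifications are made, the comparison principle delivers $\tilde w \leq 0$ on $M_t \times [0,\tau]$, which is exactly \eqref{eqn-Habove}.
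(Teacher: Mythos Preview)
Your computation of the evolution identity for $w = \Fom\,H$ and the resulting differential inequality is exactly what the paper does: the same product-rule expansion, the same regrouping of the gradient terms into $-2H^{-3}\nabla w\cdot\nabla H$, and the same observation that $2\omn\leq 0$ and $-|A|^2u/H\leq 0$ kill the inhomogeneous part. The paper then simply says the bound ``readily follows by the comparison principle,'' so your approach is identical; you are just more explicit about which comparison principle is meant and what needs checking. One small remark: convexity is not used to obtain the differential inequality itself (only $\Fom\geq 0$ and $\omn\leq 0$ enter there), and verifying the upper coefficient bound in \eqref{eqn-coeff1} genuinely requires a lower bound $H\gtrsim (1+|x|)^{-1}$, which is available in every place the proposition is actually invoked (via \eqref{eqn-steH} or \eqref{eqn-steHt}) but is not derivable from convexity and Proposition~\ref{prop-Hloc-above} alone.
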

\begin{proof} We will compute the evolution of $\Fom H \geq 0$ from the evolution of $H$ given in Lemma  \ref{lem-HI1} and 
the evolution of the height function $\Fom$ given by \eqref{eqn-uuu}. Indeed, combining these two equations leads
$$\EE (\Fom H)  =   -  \frac 2{H^3} |\nabla H|^2 \, \Fom - \frac 2{H^2}  \nabla H \cdot \nabla \Fom  -   \frac{|A|^2}{H} \, \Fom + 2\, \omn.$$
Writing
$$   \frac 2{H^3} |\nabla H|^2 \, \Fom + \frac 2{H^2}  \nabla H \cdot \nabla \Fom = \frac 2{H^3}  \nabla H \cdot \nabla (\Fom H)$$
and using $\Fom \geq 0$ and   $\omn \leq 0$, we conclude that $ \Fom H$ satisfies
$$\EE \big (  \Fom H  \big ) \leq  - \frac 2{H^3}  \nabla H \cdot \nabla \big ( \Fom H \big ) $$
and  the bound \eqref{eqn-Habove} readily follows by the comparison principle.   
\end{proof}

\section{Self-similar  solutions}\label{sec-cone}

%We have seen in the previous section that $\varphi:=H\omn$ satisfies the diffusion equation \eqref{eqn-ultrafast2}
%which may also be expressed in divergence form as 
%\be\label{eqn-ultrafast} \varphi_t = \D_i ( \omn^2 \, \varphi^{-2} \D_i \varphi ). 
%\ee
%Notice that $\omn^2=1/(1+|D \bar u|^2)$ behaves as a constant as $|F| \to \infty$ due to convexity and the condition \eqref{eqn-conet}.  
%Motivated by the theory for the solvability of the Cauchy problem on $\R^n \times (0,T)$  for the  model ultra-fast diffusion equations 
%\be\label{eqn-uf0}
%\varphi_t = \D_i (\varphi^{m-1}  \D_i \varphi), \qquad \mbox{on} \,\,\, \R^n \times (0,T)
%\ee
%with negative exponent $m <0$, we will study in this section special conical  solutions with critical growth at infinity. We will
%also study other convex self-similar solutions with polynomial growth at infinity. 
%It has been observed in \cite{Va}, \cite{DD1} and \cite{DD2} that the separation of variables radially symmetric   solutions 
%\be\label{eqn-svs1}
%{\Phi}(x,t) = C_{m,n} \, (T-t)^{1/(1-m)} \, |x|^{-2/(1-m)}
%\ee
%have the critical decay at infinity in the sense that equation \eqref{eqn-uf0} admits no solution if its  initial data 
%decays faster  than $|x|^{-2/(1-m)}$ at infinity. Note than in our case $m=-1$ which means that the critical behavior
%for the solution $v$ of \eqref{eqn-ultrafast} is expected to be $|x|^{-1}$. 
%
%

We will study in this section self-similar entire graph solutions $x_{n+1}=\uu(x,t)$ of the IMCF equation \eqref{eqn-u0} which have
polynomial growth at infinity,  namely $\uu (x,t) \sim |x|^q$, with $q \geq 1$. These solutions are all rotationally symmetric. 

First,  we consider rotationally  symmetric infinite cones in the direction of the vector $\omega=e_{n+1}$. 
If the vertex $P \in \R^{n+1}$ of the cone is the origin $0 \in \R^{n+1}$ for its  position vector $F$, then $\Fn=0$, 
otherwise $\Fn=\Pn$.  Since the cone is a rotationally symmetric graph,  in its graph parametrization  
$\FF(x,t):=(x, \C(|x|,t))$, $x \in \R^n$ it is given by
a height function 
\be\label{eqn-cone1}
\C(r,t) = \al(t)\, r + \kappa, \qquad  r:=|x|, \qquad x \in \R^n
\ee
for a constant $\kappa \in \R$.   The function $\C$ is a solution of the equation 
\be\label{eqn-IMCF-radial}
\C_t = -\frac{(1+\C_r^2)^2}{ \C_{rr} + (n-1) \,  (1+\C_r^2)\, \C_r /r}
\ee
which is satisfied by any rotationally symmetric graph $\FF(x,t):=(x, \uu(|x|,t))$, $x \in \R^n$ which evolves by equation \eqref{eqn-IMCF2}. 
\bee\label{picture2}{\mbox  \qquad  \includegraphics[width=2.5in]{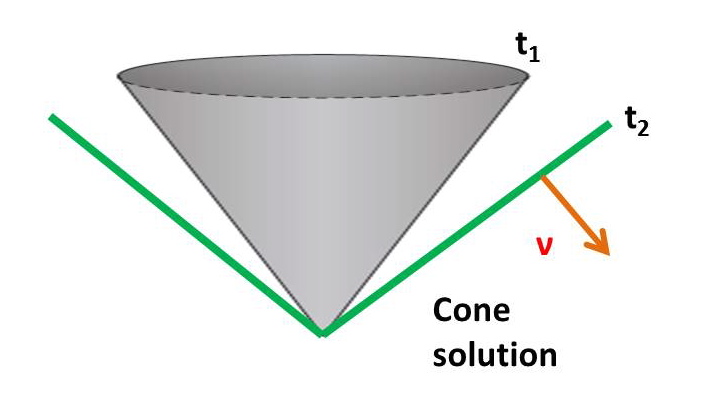} }\eee
It follows from \eqref{eqn-IMCF-radial} that $\alpha(t)$ satisfies 
the ODE
\be\label{eqn-ode1}
\al'(t) = - \frac 1{n-1}  \left (\alpha(t) + \frac 1{ \alpha(t)} \right ).
\ee
On the conical solution we have  
\be\label{eqn-con1}
\omn = -\frac 1{\sqrt{1+\al(t)^2}}
\ee
and 
\be\label{eqn-Hcone}
H(r,t) = \frac{(n-1)\, \al(t)}{r\, \sqrt{1+\al(t)^2}}.
\ee
We conclude that on the conical solution
$$v:= - \Fom \, \omn\, H = \gamma(t):= \frac{(n-1)\, \al(t)^2}{1+\al(t)^2}.$$
Setting 
$$\beta(t):= \omn^2=\frac 1{1+\al(t)^2}$$ we have 
$$\gamma(t)= (n-1)\, (1- \beta(t)).$$ 
To compute the evolution of $\gamma(t)$ and $\beta(t)$ it is simpler to use the equations  
 \eqref{eqn-omnu} and \eqref{eqn-fhatnu}  which directly give
$$\beta'(t) = \frac 2{(n-1)} \beta(t) \quad \mbox{and} \quad  \gamma'(t) = - 2 \, \beta(t).$$
This is correct since $\gamma(t)$ and $\beta(t)$ are independent 
of the parametrization.  
We conclude that
\be\label{eqn-ode4}
\beta'(t) = \frac 2{(n-1)} \beta(t) \quad \mbox{and} \quad \gamma'(t) = 2\,  \big ( \frac {\gamma(t)}{n-1} - 1 \big ).
\ee
Solving the last ODE's    with initial conditions  $\beta_0 \in (0,1)$ and $\gamma_0:= (n-1)\, (1-\beta_0)$ yields
\be\label{eqn-gammat}
\beta(t) = \beta_0 \, e^{2t/(n-1)}  \quad \mbox{and} \quad  \gamma(t) = (n-1) \big (1 - (1-\frac{\gamma_0}{(n-1)} )\, e^{2t/(n-1)} \big ).
\ee
Finally, recalling that $1+\al^2(t)=\beta(t)^{-1}$, we conclude that the {\em slope} $\al(t)$ of the conical solution is 
$$\al(t) = \big ( (1+\al_0^2) \, e^{-2t/(n-1)} - 1 \big )^{1/2}.$$
It is clear from the above equations that the conical solution will become flat at time $T(\alpha_0)$ given by
\be\label{eqn-T}
T(\alpha_0)= \frac {n-1}2 \log (1+\al_0^2) = \frac {n-1}2 \log \big ( \frac{n-1}{(n-1)-\gamma_0} \big ). 
\ee

\medskip

Next, let us briefly discuss other self-similar solutions of equation \eqref{eqn-u0} which exists for all time $t >0$ and they  are also rotationally
symmetric.   It is simple  to observe that the time $t$ cannot be scaled in the fully-nonlinear
equation \eqref{eqn-u0}. Nevertheless,  equation \eqref{eqn-u0} admits (non-standard) self-similar solutions of the form
\be
\label{eqn-ss1}
\uu_\lambda(x,t) = e^{\la t} \, \uu_\lambda( e^{-\la t} \, x), \qquad (x,t)  \in \R^n \times \R
\ee
for a suitable range of  exponents  $\lambda >0$, where $x_{n+1}=\uu_\la(x)$ are entire convex graphs over $\R^n$.
The function $x_{n+1}=\uu_\la(x)$, $x \in \R^n$ satisfies  the fully-nonlinear elliptic
equation
\be\label{eqn-ula}
 \mbox{div}  \left ( \frac {D \uu}{\sqrt{1+ |D \uu|^2}} \right )  = \frac 1\la \, \frac{ \sqrt{1+|D u|^2}}{x \cdot D \uu - \uu}.  
\ee
Although equation \eqref{eqn-ula} may possess non-radial solutions,   restricting   ourselves  to rotationally 
symmetric solutions  $x_{n+1}=\uu_\la(r)$, $r=|x|$ , it follows that $\uu:= \uu_\la(r)$ satisfies the ODE
\be\label{eqn-ur}
\uu_{rr} + (n-1) \,  (1+u_r^2)\, \frac{\uu_r }r - \frac 1{\la} \,   \frac{((1+\uu_r^2)^2}{r\, \uu_r -\uu}=0.
\ee  
One needs to impose condition $\uu(0)= \kappa<0$ 
to guarantee the   existence of an entire convex solution. The following was shown by the authors and J. King in \cite{DH}. 

\begin{thm}[The existence of self-similar solutions] \label{thm-sss} For every  $\la > 1/(n-1)$ and $\kappa <0$, there exists a unique rotationally symmetric entire convex solution $x_{n+1} = \uu_\la (r)$ of 
\eqref{eqn-ula} on $\R^n$ with $\uu_\la(0) =\kappa$. In addition, $\uu:=\uu_\la$ satisfies the following flux condition at $r=+\infty$
\be\label{eqn-finfty2} \lim_{r \to \infty} \frac{r\, u_r(r)}{u(r)}=q, \qquad q:=\frac{\la\, (n-1)}{(n-1)\, \la -1}.
\ee
\end{thm}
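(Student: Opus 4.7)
The plan is to treat \eqref{eqn-ur} as a singular initial value problem at $r=0$, extend the local solution globally using the convex structure of the equation, and then extract the asymptotic flux condition via a phase-plane/rescaling analysis.

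\textbf{Step 1 (Local existence at the singular point).} Rotational smoothness forces $\bar u_r(0)=0$, and $\bar u(0)=\kappa<0$ is prescribed. Passing to the limit $r\to 0$ in \eqref{eqn-ur} (using L'H\^opital on the term $(n-1)(1+\bar u_r^2)\bar u_r/r\to (n-1)\bar u_{rr}(0)$) yields the forced value
\[
\bar u_{rr}(0)=\frac{1}{n\,\lambda\,|\kappa|}>0.
\]
With these Cauchy data the singularity at $r=0$ is a regular one and a standard Picard/series argument (or a change of variable $s=r^2$) produces a unique analytic solution $\bar u_\lambda$ on a small interval $[0,r_0)$. This also gives local uniqueness.

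\textbf{Step 2 (Propagation of convexity and of $r\bar u_r-\bar u>0$).} From $\bar u_{rr}(0)>0$ and $\bar u_r(0)=0$ one has $\bar u_r>0$ and $\bar u_{rr}>0$ on a right neighborhood of $0$. By convexity $\bar u(r)\ge \bar u(0)+r\bar u_r(r)\cdot 0$ is not quite the inequality we want; instead one uses that convexity together with $\bar u(0)=\kappa<0$ gives $r\bar u_r(r)-\bar u(r)\ge -\kappa=|\kappa|>0$, so the denominator in \eqref{eqn-ur} never vanishes. I would then show $\bar u_{rr}>0$ propagates as long as the solution exists: at a putative first zero $r_*$ of $\bar u_{rr}$ one would differentiate \eqref{eqn-ur} and use the positivity of $\bar u_r,\,r\bar u_r-\bar u$ to exclude the sign change.

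\textbf{Step 3 (Global existence in $r$).} The only way the solution can fail is if $\bar u_r$ blows up at some finite $r_*$. To rule this out I would construct explicit sub/supersolution barriers of the form $A r^q+B$ (with $q=\lambda(n-1)/(\lambda(n-1)-1)$) and $\alpha r+\beta$; the cone family $\C(r,t)$ of Section~\ref{sec-cone} and the power $r^q$ are already natural candidates. Comparison against these via a maximum-principle argument adapted to the ODE gives a bound $0<\bar u_r(r)\le C(1+r^{q-1})$ on every compact interval, which upgrades to global existence.

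\textbf{Step 4 (Asymptotic flux condition).} This is the step I expect to be the main obstacle. Introduce the logarithmic variable $\xi=\log r$ and the rescaled quantities
\[
U(\xi)=\frac{\bar u(r)}{r^q},\qquad V(\xi)=\frac{r\bar u_r(r)}{\bar u(r)},
\]
so that $V(\xi)=q+U'(\xi)/U(\xi)$. Substituting into \eqref{eqn-ur} and using the leading-order behavior $1+\bar u_r^2\sim \bar u_r^2$ (valid once $\bar u_r\to\infty$) produces an autonomous planar ODE whose unique equilibrium in the physical region is $V=q$, precisely the value obtained by the balance
\[
(n-1)\lambda(q-1)=q.
\]
A linearization at that equilibrium shows it is a stable node/spiral for $\lambda>1/(n-1)$, and a Lyapunov argument (or direct monotonicity of $V$ eventually, driven by the sign of $V-q$ in the rescaled equation) shows that the orbit starting from the data of Step~1 is attracted to it. This yields $r\bar u_r/\bar u\to q$, which is \eqref{eqn-finfty2}.

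\textbf{Step 5 (Uniqueness).} Local uniqueness from Step~1 together with the smooth dependence of the ODE on data on any interval $[r_0,R]$ where $r\bar u_r-\bar u>0$ (guaranteed by Step~2) gives global uniqueness among rotationally symmetric entire convex solutions with $\bar u(0)=\kappa$. The delicate point in the whole argument is Step~4: extracting the sharp flux exponent requires that the non-leading terms in the rescaled ODE genuinely decay and do not generate logarithmic corrections, which is where the hypothesis $\lambda>1/(n-1)$ (ensuring $q>1$, hence $\bar u_r\to\infty$ and the self-similar balance is valid) plays an essential role.
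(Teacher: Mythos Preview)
The paper does not prove Theorem~\ref{thm-sss}; it is quoted from the companion work \cite{DH} (Daskalopoulos--Huisken--King, private communication), so there is no in-paper argument to compare your proposal against.

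That said, your outline is a reasonable blueprint for such a result. Step~1 is correct (the forced value $\bar u_{rr}(0)=1/(n\lambda|\kappa|)$ checks out), and Step~2 is fine once you note that $\phi(r):=r\bar u_r-\bar u$ satisfies $\phi'=r\bar u_{rr}$, so convexity makes $\phi$ increasing with $\phi(0)=|\kappa|$. The leading-order balance in Step~4, $(n-1)\lambda(q-1)=q$, is exactly the one producing the stated exponent. The genuinely incomplete parts are the ones you flag yourself: in Step~3 the barrier argument needs to be made precise (in particular you must first establish $\bar u_r\to\infty$ before the $r^q$ comparison is meaningful, so the logic is slightly circular as written), and in Step~4 the reduction to an autonomous planar system and the stability analysis of the equilibrium $V=q$ require actual computation rather than assertion---the claim that the non-leading terms decay without logarithmic corrections is the heart of the matter and is not yet justified.
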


The condition \eqref{eqn-finfty2} shows that $u_\la(x) \sim |x|^\frac{\la\, (n-1)}{(n-1)\, \la -1}$ as $|x| \to \infty$. Notice that since $\la$ is
any number  $\la > 1/(n-1)$, the exponent $q:=\frac{\la\, (n-1)}{(n-1)\, \la -1}$ covers the whole range 
$q  \in (1,+\infty)$, hence each solution $u_\la$ has a polynomial growth at infinity larger than that of the conical solution 
$x_{n+1} = \al(t)\, |x| + \kappa$.  It would be interesting to see whether the limit
$\lim_{\la \to +\infty} u_\la$ gives  the conical solution  or possibly another solution with super linear behavior as $|x| \to \infty$.

%%%%%%%%%%%%%%%%%%%%%%%%%%%%%%%%%%%%%%%%%%%%%%%%%%%%%%%%%%%%%%%%%%%%%%

\section{The super-linear case and short time existence}\label{sec-ste}

In this section we assume that $M_0$ is an entire  graph $\{x_{n+1} = \uu(\cdot,0)\}$  over $\R^n$ 
in the direction of the vector $\om:= e_{n+1}$. We first prove  long-time existence for solutions to \eqref{eqn-IMCF} with superlinear growth that are {\it $\delta$-starshaped} (see below) and then  we establish the short-time existence for the critical case of convex solutions that lie between the rotationally symmetric cones $x_{n+1}=\C_i(\cdot,0)$, with $\C_1(\cdot,0):= \alpha_0 \, |x|$  and   $\C_2(\cdot,0):= \alpha_0 \, |x| + \kappa$, $x \in   \R^n$, as in \eqref{eqn-cone0}. The solutions $M_t$ are then given as the graph of $\uu(\cdot,t)$ satisfying  \eqref{eqn-u0}.

\smallskip 

In our first result, Theorem \ref{thm-ste0} below, we will assume that the   initial data $M_0$ is an 
entire graph $x_{n+1} = \uu_0(x)$  over $\R^n$ with superlinear growth at infinity,  i.e.
\begin{equation}\label{superlinear}
|D \uu_0(x)| \to \infty,  \,\,\,   \uu_0(x) \to \infty,  \qquad \text{for} \, \,|x| \to \infty
\end{equation} 
and is strictly starshaped with a uniformity condition:

We say that $M_0$ is {\em $\delta$-starshaped}  if there is a point $\bar x_0 \in \R^{n+1} \cap \{(x, x_{n+1})| u_0(x) <x_{n+1} \}$ and 
a constant $\delta>0$ such that the mean curvature $H$ satisfies 
\begin{equation}\label{starshaped}
H \langle F -\bar x_0, \nu \rangle \geq \delta >0
\end{equation}
everywhere on $M_0$.  By Lemma \ref{lem-fnu}, this condition which provides  a scaling invariant quantitative 
measure for the starshapedness of a hypersurface,  is preserved under inverse mean curvature flow whenever
the maximum principle can be applied.

\begin{thm}[Existence for superlinear initial data]\label{thm-ste0} Assume that  the initial surface 
$M_0$ is an entire graph $\{(x, x_{n+1})|\,x \in \R^n, x_{n+1}=\uu_0(x)\}$ with 
$\uu_0\in C^2(\R^n)$ and satisfying the assumptions (\ref{superlinear}) and (\ref{starshaped}), for some $\bar x_0 \in \R^{n+1}$. 
Then,  there is a smooth solution $F: M^n \times [0, \infty) \to \R^{n+1}$ of the inverse mean curvature flow 
\eqref{eqn-IMCF} for all times $t>0$ that can be written as a graph $M_t = F(\cdot, t)(M^n) =\{x_{n+1}=\uu(x,t)\} $ with initial data $M_0$. If $\uu_0$ is convex, then the  solution $M_t$ is also convex for all time. 

\end{thm}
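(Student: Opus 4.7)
The plan is to construct the long-time solution by a compact exhaustion, appealing to Gerhardt's global existence for compact starshaped initial data and then passing to the limit using uniform a priori estimates that come from the preservation of the $\delta$-starshapedness, the local mean curvature bound of Proposition \ref{prop-Hloc-above}, and the preservation of graphicality. First I would approximate $M_0$ by compact, smooth, strictly mean-convex, strictly $\delta'$-starshaped hypersurfaces $M_0^R$ (with $\delta' < \delta$) obtained by truncating the graph of $\bar u_0$ over a ball $B_R \subset \R^n$ and gluing on a rotationally symmetric cap below. The superlinear condition \eqref{superlinear} is what allows the caps to be chosen so that $M_0^R$ is uniformly $\delta'$-starshaped with respect to $\bar x_0$ and coincides with $M_0$ on $\{|x| \le R/2\}$. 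For each $R$, Gerhardt's theorem \cite{Ge} then yields a smooth solution $M_t^R$ of \eqref{eqn-IMCF} defined for all $t \in [0,\infty)$.

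Next I would derive uniform (in $R$) a priori estimates on compact subsets of $\R^{n+1}$. By Lemma \ref{lem-fnu}, $H \langle F - \bar x_0, \nu \rangle$ satisfies a linear parabolic equation with no zeroth-order term, so the compact maximum principle applied to each $M_t^R$ preserves the bound $H \langle F - \bar x_0, \nu \rangle \ge \delta'$ for all $t \ge 0$. Proposition \ref{prop-Hloc-above} gives a local upper bound on $H$ which is independent of $R$ for $R$ large. Combining these with the local upper bound on $\langle F - \bar x_0, \nu \rangle$ coming from graphicality, one obtains a strictly positive lower bound on $H$ on each fixed compact subset, uniform in $R$. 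Graphicality itself is preserved through Lemma \ref{lem-omnu}: the quantity $\langle \omega, \nu \rangle$ satisfies a linear parabolic equation that preserves its sign on each compact $M_t^R$, and the already-established local $H$ bound quantifies its distance from $0$ on compact sets.

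Once $H$ is locally pinched between two positive constants, the fully nonlinear equation \eqref{eqn-u0} is uniformly parabolic on compact subsets, so Krylov--Safonov estimates followed by Schauder estimates provide uniform $C^{k,\alpha}_{\mathrm{loc}}$ bounds on $\bar u^R$. A diagonal subsequence yields a smooth limit $\bar u$ solving \eqref{eqn-u0} on $\R^n \times [0,\infty)$ with initial data $\bar u_0$. Uniqueness is a consequence of Proposition \ref{prop-comp} applied to the linearization of \eqref{eqn-u0} at a smooth solution, using the local $H$ bounds to verify the coefficient hypotheses. Preservation of convexity when $\bar u_0$ is convex follows by a tensor maximum principle argument on each compact $M_t^R$, starting from the evolution of the second fundamental form recorded in item (4) of Lemma \ref{lem-HI1}, and then transferred to the limit.

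The main obstacle I anticipate is handling the non-compactness at the level of the maximum principle. The compact maximum principle yields preservation of $\delta'$-starshapedness and graphicality on each $M_t^R$, but to extract constants independent of $R$ one must verify that the truncating caps, prescribed only at $t=0$, do not corrupt the a priori bounds in the bulk region $\{|x| \le R/2\}$ as the flow runs. The ultra-fast diffusive character of the equation, emphasized by the authors in the introduction, makes behavior at spatial infinity propagate very rapidly toward the interior, so the caps must be chosen with care (for example as large pieces of spheres or of the self-similar solutions constructed in Section \ref{sec-cone}) to guarantee that the $\delta'$-starshapedness and the comparison with $M_0$ in the bulk remain robust under the flow. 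The superlinearity hypothesis is essential here: it makes such caps admissible and it forces $H \langle F - \bar x_0, \nu \rangle$ to remain well-behaved as $|x| \to \infty$, which is precisely what is needed to invoke Proposition \ref{prop-comp} for uniqueness and for controlling the limit.
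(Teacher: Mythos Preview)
Your overall strategy---compact approximation, Gerhardt's result, uniform local estimates from $\delta$-starshapedness together with Proposition~\ref{prop-Hloc-above}, and passage to the limit---is exactly the one the paper follows. The differences are in the execution of two steps.

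\textbf{Construction of the compact approximations.} The paper does not glue caps. Instead, for each height $R$ it reflects the portion $\{\bar u_0 \le R\}$ of the graph across the hyperplane $\{x_{n+1}=R\}$, producing a closed Lipschitz hypersurface $\Sigma_{0,R}$ with a corner along $\{x_{n+1}=R\}$. A direct computation shows this reflected surface is again $\delta$-starshaped with respect to the same point. The superlinearity hypothesis enters here: it forces $|D\bar u_0|$ to be large on $\{\bar u_0=R\}$, so the dihedral angle at the edge is nearly flat. The edge is then smoothed by running mean curvature flow for a short time (Lemma~\ref{MCF}), yielding smooth, uniformly $\tilde\delta$-starshaped closed hypersurfaces with $\tilde\delta\to\delta$. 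Your cap construction is plausible in spirit but you would need to actually produce caps that are simultaneously smooth, mean-convex, and $\delta'$-starshaped with respect to $\bar x_0$; the reflection trick avoids this bookkeeping and makes the $\delta$-starshapedness of the approximation a one-line computation.

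\textbf{Graphicality of the limit.} Here there is a genuine gap in your proposal. On a \emph{closed} hypersurface $M_t^R$ the quantity $\langle\omega,\nu\rangle$ necessarily changes sign, so Lemma~\ref{lem-omnu} cannot be used as a sign-preservation statement on $M_t^R$; there is no bulk region on which the maximum principle confines the zero set. The paper circumvents this by a different argument: since $\langle\omega,\nu\rangle\le 0$ on $M_0$, the initial surface is also $\delta$-starshaped with respect to every translated point $\bar x_\rho=\rho\omega$, $\rho>0$; this persists on each $\Sigma^i(t)$ (for $R_i>\rho$) by the compact maximum principle applied to Lemma~\ref{lem-fnu}, hence on the limit $M_t$. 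Dividing $\langle F-\rho\omega,\nu\rangle>0$ by $\rho$ and sending $\rho\to\infty$ gives $\langle -\omega,\nu\rangle\ge 0$ on $M_t$, and the strong maximum principle upgrades this to strict inequality.

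Two smaller points. For the curvature bound the paper invokes Heidusch's local estimate for inverse mean curvature flow rather than passing through the graphical PDE; this works directly on the closed approximations without first knowing they are graphs on the bulk. And the theorem does not assert uniqueness, so you need not invoke Proposition~\ref{prop-comp}; in fact its coefficient hypotheses are tailored to the linear-growth case and would require modification for genuinely superlinear data. For convexity the paper simply cites Urbas~\cite{Ur} on the compact approximations, which is equivalent to what you suggest.
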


\begin{rem} (i) It is easy to see that the assumptions (\ref{superlinear}) and (\ref{starshaped}) are satisfied if 
$\uu_0(x) = |x|^q$,   provided $q > 1$.\hfill\break
(ii) The condition ``$\delta$-starshaped" is reminiscent but different from the ``$\delta$-non-collapsed" condition 
that has been used in mean curvature flow.
\end{rem}

\begin{rem}\label{rem-local} In the case of  convex initial data,  the condition  
$\uu_0 \in C^2(\R^n)$  in  Theorem \ref{thm-ste0}  may be replaced by the weaker condition $\uu_0 \in C^2_{\text{loc}}(\R^n)$,  since the mean curvature is uniformly controlled on compact sets. 

\end{rem}
\begin{proof} By translating the surface we may assume that $\bar x_0 = 0$ is the origin of $\R^{n+1}$; then
$\uu_0(x) \geq C_0$ is bounded below everywhere by some negative constant. For the proof we will assume that $\uu_0\in C^{2, \alpha} (\R^n)$.
For initial data just in $C^2$ as our theorem states,  the result   follows by approximation in view of the estimates we establish.

By the assumption (\ref{superlinear}) we may choose $R_0 > 1$ such that $|D\uu_0(x)| \geq 100$
provided $|x| \geq R_0$. We want to approximate $M_0$ with compact surfaces by replacing the
region $\{ \uu_0 \geq R\}$ of the surface with the reflection of the region $\{ \uu_0 \leq R\}$ on the plane at height $R$.
Set,  for each $R\geq R_0$
\begin{equation}
\hat u_{0,R}(x) := 2R - \uu_0(x)
\end{equation} 
and set
\begin{equation}
E_{0,R} := \{(x, x_{n+1}) \in \R^n \times \R :  \, \,  \uu_0(x) < x_{n+1}  < \hat u_{0,R} (x)\}, \quad \Sigma_{0,R} := \partial E_{0,R}.
\end{equation} 
The outer unit normals $\nu$ and $\hat \nu$ to the lower and and upper part of $\Sigma_{0,R}$
are given by
\begin{equation}
\frac{(D\uu_0, -1)}{\sqrt{1 + |D\uu_0|^2}}\quad \text{and} \quad \frac{(-D\hat  u_{0,R}, 1)}{\sqrt{1 + |D\hat u_{0,R}|^2}} 
\end{equation}
respectively, where by definition $D \hat  u_{0,R} (x) = - D \uu_0(x)$ and the mean curvatures $H$ and $\hat H$
satisfy $\hat H(x, \hat u_{0,R}(x)) = H(x, \uu_0(x))$. In particular we get that for $(x,x_{n+1}) \in \Sigma_{0,R}$ 
with $x_{n+1} = R$
 \begin{equation}\label{Lipschitz}
 \langle \nu, \hat \nu\rangle (x,x_{n+1}) = \frac{|D\uu_0(x)|^2 -1}{|D\uu_0(x)|^2 +1} \geq 1 - 10^{-3}.
\end{equation}
In addition  we compute that $\hat F_{R}(x,0) := (x, \hat u_{0,R}(x)$ satisfies
\be\label{R-starshaped}
\begin{split}
\hat H (x, &\hat u_{0,R}(x))  \langle \hat F_R(x,0),  \hat \nu\rangle(x, \hat  u_{0,R}(x)) \nonumber \\ 
&= \frac{H(x, \uu_{0,R} (x)) }{\sqrt{1+|D\uu_{0,R}(x)|^2}} \Big( \langle (x, \uu_{0,R}(x), (-D\uu_{0,R} (x), 1)\rangle\Big) \nonumber \\
&= \frac{H(x, \uu_0(x)) }{\sqrt{1+|D\uu_0(x)|^2}} \Big( \langle (x, 2R - \uu_0(x), (D\uu_0(x), 1)\rangle\Big)\nonumber \\
&= H(x, \uu_0(x)) \langle F(x), \nu\rangle(x, \uu_0(x))
+ \frac{2R}{\sqrt{1+|D\uu_0(x)|^2}} \geq \delta>0
\end{split}
\ee
such that the surface $\Sigma_R = \partial E_R$ is again uniformly $\delta$-starshaped. If the initial function
$u$ is convex, all regions $E_R$ are convex as well.

Next we smoothen out the region $x_{n+1}= \uu_0(x) = R$ using mean curvature flow:

\begin{lem}\label{MCF} Given $\uu_0 \in C^{2, \alpha}(\R^n)$, 
for each $\Sigma_R = \partial E_R$ as above there is a one-parameter family
of hypersurfaces $\Phi: S^n \times [0, s_R] \to \R^{n+1},\, \Phi(\cdot, s) (S^n) = \Sigma_R(s),\, s_R>0,$ with initial data 
$\Sigma_R(0)  = \Sigma_R$ satisfying mean curvature flow
\begin{equation}\label{mcf}
\frac{d}{ds} \Phi(p,s) =  \overrightarrow H (\Phi(p,s)), \qquad p\in S^n, \,  s \in [0, s_R].
\end{equation}
The surfaces $\Sigma_R(s),\, s\in [0, s_R],$ are smooth and approach $\Sigma_R$ in $C^{0, 1/2}$ as $s \to 0$. 
For small $s_R >0$ they are $\tilde\delta$-starshaped with $\tilde\delta \geq \delta - o(s^{\alpha/2})$. 
We may choose $\sigma_R \in (0, s_R]$ such that $\sigma_R \to 0$ as $R \to \infty$ and all 
$\Sigma_R(\sigma_R)$ are uniformly bounded in $C^{2, \alpha}$. If $u_0$ is convex, then $\Sigma_R(\sigma_R)$ is strictly convex with some lower bound $\lambda_R>0$ for all its principal curvatures.
\end{lem}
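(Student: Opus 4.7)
The plan is to treat $\Sigma_R$ as a compact Lipschitz hypersurface whose only singularities lie along the crease $\{\uu_0=R\}\times\{x_{n+1}=R\}$, where \eqref{Lipschitz} gives a dihedral angle bounded away from $\pi$; away from that crease, $\Sigma_R$ is already $C^{2,\alpha}$ with norm controlled by $\|\uu_0\|_{C^{2,\alpha}(\{|\uu_0|\leq R\})}$. First I would approximate $\Sigma_R$ by a family $\Sigma_R^\varepsilon$ of compact embedded $C^{2,\alpha}$ hypersurfaces, obtained by mollifying only in a tubular $\varepsilon$-neighborhood of the crease, so that $\Sigma_R^\varepsilon\to\Sigma_R$ in $C^{0,1}$ and the surfaces remain uniformly $(\delta-c\varepsilon^\alpha)$-starshaped in the sense of \eqref{starshaped}. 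If $\uu_0$ is convex, then $E_R=\{\uu_0(x)<x_{n+1}<2R-\uu_0(x)\}$ is convex as the intersection of two convex sets, and $\Sigma_R^\varepsilon$ can be chosen strictly convex by a standard convexity-preserving smoothing of the defining functions.

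Standard short-time existence for smooth MCF then gives solutions $\Sigma_R^\varepsilon(s)$ on an interval $[0,s_R^\varepsilon)$. Applied in local graphical coordinates, the Ecker--Huisken interior estimates produce $\varepsilon$-uniform bounds on $|A|$ and its derivatives away from the crease; near the crease, the small-angle assumption \eqref{Lipschitz} allows the use of the Ecker--Huisken graphical estimates for Lipschitz initial data on the parabolic scale $s^{1/2}$. Combined with parabolic Schauder theory this yields a common existence interval $[0,s_R]$ and $\varepsilon$-uniform $C^{2,\alpha}$ bounds on any compact subinterval of $(0,s_R]$. A subsequential limit $\varepsilon\to 0$ produces the family $\Sigma_R(s)$ satisfying \eqref{mcf}; the convergence $\Sigma_R(s)\to\Sigma_R$ in $C^{0,1/2}$ as $s\to 0$ is the standard parabolic Hölder modulus inherited from the initial Lipschitz regularity.

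The preservation of $\tilde\delta$-starshapedness will be handled by the maximum principle applied to the evolution equation of $H\,\langle F-\bar x_0,\nu\rangle$ under MCF, which is uniformly parabolic with lower-order coefficients controlled by $|A|^2$. On the smooth approximants $\Sigma_R^\varepsilon(s)$ the minimum of this quantity cannot drop below $\delta$ faster than a rate set by these coefficients, and upon passing $\varepsilon\to 0$ the Hölder loss $o(s^{\alpha/2})$ reflects the parabolic smoothing near the crease on the $s^{1/2}$-scale together with the $C^{\alpha}$ modulus of $H\,\langle F-\bar x_0,\nu\rangle$ on the smooth portions. Convexity under MCF is Huisken's classical result, and strict convexity with a lower bound $\lambda_R>0$ on the principal curvatures at time $\sigma_R$ follows from the strong maximum principle applied to the tensorial evolution of $h_{ij}$. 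Finally, $\sigma_R$ is chosen small enough (with $\sigma_R\to 0$ as $R\to\infty$) so that $\Sigma_R(\sigma_R)$ remains close to $\Sigma_R$ in $C^{0,1/2}$ and all the geometric bounds transfer.

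The hard part will be obtaining the $\varepsilon$-independent $C^{2,\alpha}$ bound near the crease: the Ecker--Huisken estimates are natively $L^\infty$ bounds on the curvature, and promoting them to a Hölder bound uniformly as the mollified corner sharpens requires choosing the smoothing scale $\varepsilon$ compatibly with the parabolic scale $s^{1/2}$ and with the geometry of the crease (which steepens as $R\to\infty$ because $|D\uu_0|\to\infty$ on $\{\uu_0=R\}$). This compatibility condition is what ultimately forces $\sigma_R\to 0$ as $R\to\infty$, and is the only place in the argument where the superlinearity assumption \eqref{superlinear} plays a quantitative role.
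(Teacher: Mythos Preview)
Your overall strategy---mollify near the crease, run MCF on the smooth approximants, extract a limit via Ecker--Huisken interior estimates---is close in spirit to the paper's argument, which instead applies the Ecker--Huisken existence theory directly to the Lipschitz surface $\Sigma_R$. But there is a genuine gap at precisely the point you flag as hard.

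The Ecker--Huisken estimates near the crease give you $|A|\leq C/\sqrt{s}$, which blows up as $s\to 0$; you offer no mechanism to upgrade this to a uniform $C^{2,\alpha}$ bound, only the vague assertion that $\varepsilon$ and $s^{1/2}$ must be chosen ``compatibly''. The paper fills this gap with a specific blow-up argument: rescaling $\Sigma_R(s)$ around a point $y$ on the edge as $s\to 0$ converges to $\Gamma(s)\times\R^{n-1}$, where $\Gamma(s)$ is the unique self-similar \emph{expanding} solution of curve-shortening flow emanating from the wedge of opening angle determined by $\nu(p_0),\hat\nu(p_0)$. Halldorsson's explicit formula
\[
\kappa(r,s)=\frac{1}{\sqrt{2s}}\,\kappa_{\max}\!\big(\tfrac12\big)\exp\Big(\kappa_{\max}^2\!\big(\tfrac12\big)\,s-\frac{r^2}{4s}\Big)
\]
shows that $\kappa_{\max}(1/2)\to 0$ as the opening angle closes, i.e.\ as $R\to\infty$. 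Choosing $\sigma_R\leq\kappa_R^2$ (with $\kappa_R$ the largest such $\kappa_{\max}$ along the edge) then gives the uniform $C^{2,\alpha}$ bound and forces $\sigma_R\to 0$. Without this blow-up identification you have no quantitative control on the curvature near the smoothed corner and no concrete rule for choosing $\sigma_R$.

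A secondary issue: your proposed route to $\tilde\delta$-starshapedness via the maximum principle for $H\langle F-\bar x_0,\nu\rangle$ under MCF is not as clean as you suggest. Under MCF one computes $(\partial_s-\Delta)(H\langle F,\nu\rangle)=2|A|^2 H\langle F,\nu\rangle-2H^2-2\nabla H\cdot\nabla\langle F,\nu\rangle$, and the $-2H^2$ term works against a lower bound. The paper does not attempt this; it instead combines the interior Schauder estimate $|H(\Phi(p,s))-H(\Phi(p,0))|\leq c(R)\,r(p)^{-1-\alpha}s^{\alpha/2}$ away from the edge with the exponential curvature decay near the edge to deduce $\tilde\delta\geq\delta-o(s^{\alpha/2})$ directly.
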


\begin{proof} [Proof of Lemma \ref{MCF}] $\Sigma_R$ is a uniformly Lipschitz hypersurface over its tangent spaces in view of (\ref{Lipschitz}), 
so we may solve mean curvature flow for a short time with $\Sigma_R$ as initial data, compare (\cite{EH}, Theorems 3.4 and 4.2),  
to obtain a smooth solution $\Sigma_R(s)$ for (\ref{mcf}) on some time interval $(0, s_R], s_R>0$ 
which approaches the initial data in $C^{0, 1/2}$ as $s \to 0$. For small $s>0$ this solution has strictly positive mean curvature;
this follows from the fact that  $\Sigma_R$ provides a barrier for Mean curvature flow and can be approximated by a smooth 
surface of strictly positive mean curvature from the inside, e.g. by gluing in arbitrary small sectors of an approximate
cylinder along the edge $x_{n+1} = R$. Since $\bar u_0\in C^{2, \alpha}(\R^n)$, the interior regularity estimates in
\cite{EH} combined with Schauder theory yield
\begin{equation}\label{Hoelder}
\Big\vert H(\Phi(p,s)) - H(\Phi(p,0))\Big\vert \leq c(R )\,  r(p)^{-1-\alpha} \, s^{{\alpha}/{2}}
\end{equation}
where $r(p ) = |\Phi^{n+1}(p,0) -R|$ is the distance to the singular set $\{x_{n+1} = R\}$ and $c(R )<\infty$ depends 
on the  $C^{2, \alpha}$- norm of $\uu_0$ on $B_R(0)$. If $y = \Phi(p_0,0)$ with $y^{n+1} = R$ is a point on the
edge formed between ${\rm graph}(\uu_0)$ and ${\rm graph}(\hat u_{0,R})$, then in view of the uniform Lipschitz
estimates for small $s$ a rescaling of $\Sigma_R(s)$ around $y$ for $s\to 0$ converges 
to the solution of mean curvature flow $\Gamma(s) \times \,\R^{n-1}, 0<s<\infty$, where $\Gamma(s)$ is the unique selfsimilar 
expanding solution of curve-shortening flow in the 2-plane containing $e_{n+1}$ and $D \uu_0(p_0 )$
that is associated with the angle between $\nu(p_0 )$ and $\hat \nu(p_0 )$. The unit normal to this solution 
interpolates between $\nu(p_0 )$ and $\hat\nu(p_0 )$ while its geodesic curvature decays exponentially, in fact it has been 
shown in (\cite{Hal}, Lemma 6.4 ) that its geodesic curvature $\kappa(r,s)$ at time $s$ and distance $r$ from $y$ is given by
\begin{equation}\label{expcurve}
\kappa(r,s) = \frac{1}{\sqrt{2s}} \, \kappa_{\max}(\frac{1}{2}) \,  \exp\Big(\kappa_{\max}^2(\frac{1}{2})\, s -\frac{r^2(p,s)}{4s}\Big).
\end{equation}
Here $\kappa_{\max}({1}/{2})$ is determined by the opening angle between $\nu, \hat \nu$ in such a way that
$\kappa_{\max}({1}/{2}) \to 0$ as this opening angle tends to $0$, or, equivalently, $|D\uu_0| \to \infty$ on the edge $\{x_{n+1} = R\}$ as $R\to \infty$. Let $\kappa_R$ be the largest such $\kappa_{max}(1/2)$ arising from an opening angle on the edge $\{x_{n+1} =R\}$. If we then choose $\sigma_R \in (0, S_R]$ smaller than $\kappa^2_R$ we see that the surfaces $\Sigma_R(\sigma_R)$ are uniformly bounded in $C^{2, \alpha}$ in view of 
(\ref{expcurve}) and Schauder theory while approximating $M_0$ uniformly in $C^{2, \alpha}$ as $R \to \infty$ since $\sigma_R \leq \kappa^2_R \to 0$. Combining then the
$\delta$-starshapedness in (\ref{starshaped}), (\ref{R-starshaped}) with the estimates (\ref{Hoelder}) and (\ref{expcurve})
we see that $\Sigma_R(\sigma_R)$ must be $\tilde\delta$-starshaped with $\tilde\delta \geq \delta - o(\sigma_R^{\alpha/2})$.
If the function $\uu_0$ is convex then $\Sigma_R(s), s>0$ will be uniformly convex by the strong 
parabolic maximum principle, i.e. there will be $\lambda_R>0$ such that the eigenvalues
$\lambda_i, 1\leq i\leq n$ of the second fundamental form all satisfy $\lambda_i \geq \lambda_R$ everywhere
on $\Sigma_R(\sigma_R)$.\end{proof}

\noindent {\it Proof of Theorem \ref{thm-ste0} continued}. Given a sequence of radii $R_i \to \infty$ we may choose parameters $\sigma_i := \sigma_{R_i} \to 0$ as
in the preceding  lemma with corresponding smooth approximating surfaces $\Sigma^i := \Sigma_{R_i}(\sigma_i)$ with
$\Sigma^i = \partial E^i$ such that $E^i \subset E^j$ for $i<j$ and $\Sigma^i$ is $\delta_i$-starshaped with
$\delta_i \to \delta$ as $i\to \infty.$
From the work of Gerhardt  \cite{Ge} (see also Urbas \cite{Ur}), for each approximating surface $\Sigma^i$ 
there is a smooth solution $\Sigma^i(t), t\in [0, \infty)$ of inverse mean curvature flow starting from $\Sigma^i$
that approaches a homothetically expanding sphere as $t\to \infty$. 
We now combine the $\delta_i$- starshapedness for each $R>0$ with the local bound on the mean curvature obtained in proposition \ref{prop-Hloc-above} such that
\begin{equation}\label{localstarshaped1}
0<\delta_i\leq H \langle F, \nu \rangle \leq  \max (4 \max_{M_0 \cap B_R(0)} H, \, \frac{8n}{R}) 
\langle F, \nu \rangle := C_1(R) \langle F, \nu \rangle
\end{equation}
and therefore
\begin{equation}\label{localstarshaped2}
0<\frac{\delta_i}{C_1(R)} \leq \langle F, \nu \rangle \leq  R 
\end{equation}
holds everywhere on $\Sigma^i(t) \cap B_{{R}/{2}}(0)$ when $R_i > 2R$. Hence $\Sigma^i(t)$ is uniformly
starshaped in $B_{{R}/{2}}(0)$ and we may use the local curvature bound in (\cite{Hei}, Theorem 3.6 and Remark 3.7) to conclude that 
\begin{equation}\label{localcurvest}
|A|^2 \leq  C_2 \max ( \max_{M_0 \cap B_R(0)} |A|^2, R^{-1} \max_{M_0 \cap B_R(0)} H + R^{-2}).
\end{equation}
holds everywhere on $\Sigma^i(t) \cap B_{{R}/{4}}(0)$ when $R_i > 2R$, where $C_2$ depends on $n$ and $(R \, \max_{M_0 \cap B_R(0)} H) $.
Thus the solutions satisfy uniform curvature estimates independent of $i$ on  any compact set. 
Higher regularity then follows from known theory, see in  \cite{K}. To obtain a subsolution we choose for each $T<\infty$ an $0<\alpha_0=\alpha_0(T) <\infty, \kappa(T) > -\infty$ such that the conical solutions 
$$
\zeta(x, t) = \alpha(t)\,  |x| + \kappa(T), \quad \alpha(0) = \alpha_0
$$
from section \ref{sec-cone}  provide a lower barrier for all  $\Sigma^i(t)$ on  $ t\in [0, T)$. Here $\alpha_0$ is chosen so that $T$ is the lifetime of the cones $\zeta(\cdot, t)$. It follows that we can pass  to the limit to obtain a solution 
$M_t$ of the inverse mean curvature flow which is defined for all $t \in (0,+\infty)$ and is  $C^\infty$ smooth. 
Note that $M_t$ is again an entire graph: For each $\rho >0$ the initial hypersurface $M_0$ is $\delta$-starshaped also with respect to $\bar x_{\rho} = \rho\om$ since $H \langle F -\bar x_{\rho}, \nu \rangle
= H \langle F, \nu \rangle - \rho H \langle \om, \nu \rangle \geq \delta >0$ as $\langle \om, \nu \rangle \leq 0$.
If $R_i >\rho$ this will also be true for $\Sigma_i(0)$ and hence, by the maximum principle, on all $\Sigma_i(t)$.
Thus $\langle F -\rho\om, \nu \rangle >0$ for all $\rho >0$ everywhere on all $M_t$. Dividing by $\rho$ and letting $\rho\to\infty$ on compact subsets yields $\langle -\om, \nu \rangle \geq 0$ and hence $\langle -\om, \nu \rangle > 0$ by the strong maximum principle as desired.

If the initial surface $M_0$ is convex, then each $\Sigma_i(0)$ is uniformly convex by Lemma \ref{MCF}. Then
in view of the result of  Urbas \cite{Ur}  the surfaces $\Sigma^i(t), 
t\in [0, \infty)$  are also uniformly convex proving that all limit surfaces $M_t$ are convex in this case.
This completes the proof of the longtime existence of solutions with superlinear, $\delta$-starshaped initial data, as stated in Theorem \ref{thm-ste0}. 

\end{proof}

We will give next a short time existence result for convex initial data  $M_0$ which lies between two cones as
in condition \eqref{eqn-cone0}.

\begin{thm}[Short time existence of asymptotically  conical solutions] \label{thm-ste} Let  $M_0$ be  an entire convex 
 graph $x_{n+1}=\uu_0(x)$ over $\R^n$ which satisfies   condition   \eqref{eqn-cone0}.  
Assume in addition that  the mean curvature $H$ of $M_0$ satisfies  the global bounds  
 \be\label{eqn-steHt0}
 0 < c_0 \leq  H \,  \Fom \leq C_0.
\ee
Then, there exists $\tau >0$ and a unique $C^\infty$ smooth  solution $M_t$ of  \eqref{eqn-IMCF} for  
$t \in (0,\tau]$ 
which is an entire convex graph 
$x_{n+1}=\uu(x,t)$ over $\R^n$ and satisfies condition \eqref{eqn-conet}. Moreover,  on $M_t$ we have 
 \be\label{eqn-steHt}
c_\tau \leq H \, \Fom   \leq C, \qquad \mbox{for all} \,\,  t \in (0,\tau]
\ee 
for a constant $c_\tau >0$ depending on $\tau$ and $C:= \max\,  ( C_0, 2n)$. 
\end{thm}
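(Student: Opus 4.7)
The plan is compact approximation plus barrier estimates, using the cone self-similar solutions of Section \ref{sec-cone} as barriers and the classical compact IMCF theory on the approximants. First I would replace the non-compact $M_0$ by a sequence $\Sigma_0^R$, $R\to\infty$, of compact, smooth, strictly convex, strictly mean convex hypersurfaces that coincide with the graph of $\bar u_0$ above a large bounded region and are smoothly capped at large height. A convenient construction is to truncate the graph of $\bar u_0$ at height $R$, close it off by reflecting across the plane $\{x_{n+1}=R\}$, and mollify the resulting Lipschitz edge by a short burst of mean curvature flow exactly as in Lemma \ref{MCF}. Choosing a reference point $\bar x_0$ sufficiently far below $M_0$ on the $\omega$-axis and using convexity makes $\Sigma_0^R$ uniformly $\delta_R$-starshaped with $\delta_R>0$. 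The compact theory of Gerhardt \cite{Ge} and Urbas \cite{Ur} then furnishes a smooth, strictly convex solution $\Sigma_t^R$ of \eqref{eqn-IMCF} for all $t\ge 0$.

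Next I would derive uniform local estimates on $\Sigma_t^R$ over compact subsets of $\R^n\times[0,\tau]$ for any $\tau<T(\alpha_0)$. The cones $\C_1(x,t)=\alpha(t)|x|$ and $\C_2(x,t)=\alpha(t)|x|+\kappa$ from Section \ref{sec-cone} serve as barriers: on a ball $B_\rho$, the maximum principle applied to the graph equation \eqref{eqn-u0} on $B_\rho\times[0,\tau]$, using that $\bar u^R$ remains close to $\bar u_0$ on $\partial B_\rho$ for $R$ large, gives $\alpha(t)|x|\le \bar u^R(x,t)\le \alpha(t)|x|+\kappa$ on $B_\rho$. Proposition \ref{prop-Hloc-above} supplies a uniform local upper bound on $H$, convexity then controls $|A|$, and Krylov--Safonov together with Schauder theory promote these to uniform $C^\infty_{\text{loc}}$ estimates. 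A diagonal argument extracts $\Sigma_t^R\to M_t$ in $C^\infty_{\text{loc}}(\R^n\times(0,\tau])$, producing a smooth convex entire graph IMCF solution satisfying the cone sandwich \eqref{eqn-conet}.

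For the two bounds on $H\Fom$ on the limit $M_t$, the upper bound is immediate from Proposition \ref{prop-Habove} applied to $M_t$, which is now a genuine convex graph solution. The lower bound is obtained from Lemma \ref{cor-vinverse1} applied to $v:=H\Fom(-\omn)$: the function $v^{-1}$ satisfies
\[
\frac{\pp}{\pp t} v^{-1} - \D_i\!\Big(\frac 1{H^2}\D_i v^{-1}\Big) \leq 2\,\omn^{2}\, v^{-2} \leq 2\,v^{-2},
\]
and initially $v^{-1}(\cdot,0)\le c_0^{-1}\sqrt{1+\alpha_0^2}$, since the cone sandwich forces $|D\bar u_0|\le \alpha_0$ by convexity and hence $(-\omn_0)^{-1}\le \sqrt{1+\alpha_0^2}$. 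Comparison with the ODE $\dot g = 2g^2$, which stays finite on a short interval, yields a pointwise upper bound on $v^{-1}$ on $[0,\tau]$ for $\tau>0$ small, and hence $H\Fom\ge c_\tau>0$. Uniqueness then follows by applying Proposition \ref{prop-comp} to the difference of two such solutions of the fully nonlinear equation \eqref{eqn-u0}, after checking the coefficient bounds for the linearization under the cone sandwich and the curvature bounds.

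The main obstacle is justifying this maximum principle argument on the non-compact limit $M_t$: the operator $\D_i(H^{-2}\D_i\,\cdot\,)$ has coefficients that degenerate at spatial infinity where $H\to 0$ and $|F|$ is unbounded, so one must verify the coefficient and growth hypotheses \eqref{eqn-coeff1}--\eqref{eqn-coeff2} of Proposition \ref{prop-comp} for $v^{-1}$ using the cone sandwich and the already established upper bound $H\Fom\le C$ in a self-consistent way. A parallel bookkeeping issue arises in passing uniform lower bounds on $H\Fom$ from the compact $\Sigma_t^R$ to the limit $M_t$: the cap region of each approximant must not spoil the interior estimate, which is why the cone barriers and the local $H$-bound of Proposition \ref{prop-Hloc-above} must be applied only inside $B_\rho$ and the cap pushed to infinity before comparing with the ODE-type bound on $v^{-1}$.
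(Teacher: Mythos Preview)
Your overall architecture---approximate, get barriers and curvature bounds, pass to the limit, then establish \eqref{eqn-steHt}---is right, and your lower-bound argument via the differential inequality for $v^{-1}$ (or equivalently for $v$) is exactly the mechanism the paper uses. But the paper's approximation scheme is different, and the difference is not cosmetic: it is what makes the lower bound on $H\Fom$ go through. The paper approximates $M_0$ by the \emph{entire superlinear graphs} $\bar u_{0,\epsilon}(x)=\bar u_0(x)+\epsilon(|x|^2+1)$ and invokes Theorem~\ref{thm-ste0} to obtain entire convex graph solutions $M_t^\epsilon$. On these approximants the quantity $v_\epsilon=\Fhne H_\epsilon$ is globally defined and positive, and the inequality $\partial_t v_\epsilon-\nabla_i(H^{-2}\nabla_i v_\epsilon)\ge -2$ yields $v_\epsilon\ge c_0/2$ for $t\le c_0/4$, \emph{uniformly in} $\epsilon$. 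This uniform lower bound on $H_\epsilon$ is exactly what makes \eqref{eqn-u0} uniformly parabolic on compact sets and permits the passage to the limit; it is obtained \emph{before} taking the limit, not after.

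Your compact reflection approximants cannot play the same role. First, on the closed $\Sigma_t^R$ the quantity $\langle\omega,\nu\rangle$ changes sign on the reflected cap, so $v$ is not globally positive and the $v^{-1}$ inequality of Lemma~\ref{cor-vinverse1} does not yield a global bound via the maximum principle. Second, your $\delta_R$-starshapedness is not uniform in $R$ for conical data: since $H\langle F-\bar x_0,\nu\rangle\to 0$ at spatial infinity when $\bar u_0$ is asymptotically conical, one gets only $\delta_R\sim 1/R$, and the local lower bound on $H$ coming from \eqref{localstarshaped1}--\eqref{localstarshaped2} degenerates. Without a uniform-in-$R$ lower bound on $H$ you cannot run Krylov--Safonov to extract a limit. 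Applying the $v^{-1}$ argument directly on the limit $M_t$, as you propose, is circular for the reason you yourself flag: the coefficient $H^{-2}$ in the operator is only controlled by $|F|^2$ once you already know $H\Fom$ is bounded below.

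Your upper-cone barrier step also has a gap. The sandwich $\alpha(t)|x|\le \bar u^R\le \alpha(t)|x|+\kappa$ cannot be obtained by a local maximum principle on $B_\rho\times[0,\tau]$ because you have no a priori control of $\bar u^R$ on $\partial B_\rho$ at positive times. The paper instead first proves a crude linear upper bound $\bar u\le\beta(t)|x|+\kappa$ with $\beta(t)>\alpha(t)$ by foliating the epigraph with expanding spheres (Claim~\ref{claim-lba}), and only then applies the global comparison principle of Proposition~\ref{prop-comp} between two entire graph solutions (Claim~\ref{claim-lcomp}), whose coefficient hypotheses are verified using the already-established two-sided bound on $H\Fom$. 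The sharp upper bound $\bar u\le\alpha(t)|x|+\kappa$ is the last thing proved, not an input to the curvature estimates.
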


\begin{rem} We note that on the graph $M_0$ of any convex function $\bar u_0 \in C^{2}_{\text{loc}}(\R^n)$ which satisfies
condition   \eqref{eqn-cone0} one has 
\be\label{eqn-fn00}
0 \leq - \Fn \leq C_0
\ee
 for some constant $C_0$ which can be taken 
without loss of generality to  be equal to $C_0$ in  \eqref{eqn-steHt0}. 
\end{rem}

\begin{proof} 
%Since our initial data is convex and its mean curvature satisfies the bound \eqref{eqn-steHt0}, it follows  that $\uu_0 \in C^2_{\text{loc}} (\R^n)$. 
% For the proof we will assume that $\uu_0\in C^{2, \alpha}_{\text{loc}} (\R^n)$.
%For initial data just in $C^2_{\text{loc}}$ as our theorem states,  the result   follows by approximation in view of the estimates we establish.

For $\e \in (0,1)$, consider the approximations $M^\e_0$ of the initial surface $M_0$  defined as entire graphs 
$x_{n+1} = \uu_{0,\e}(x)$, with 
\be\label{eqn-uee}
\uu_{0,\e}(x) = \uu_0(x) + \e \, (|x|^2 +1), \qquad x \in \R^n.
\ee
Then, each $\uu_{0,\e} $ satisfies the conditions of Theorem \ref{thm-ste0}  (with $\uu_{\e,0}  \in C^2_{\text{loc}}$ instead of $\uu_{0,\e} \in C^2$) and
in addition it is strictly convex. By Theorem \ref{thm-ste0}  and Remark \ref{rem-local} there exists a  solution 
 $M^\e_t$  to \eqref{eqn-IMCF} on $t \in (0,+\infty)$ with initial data $M^\e_0$. 
In addition   $M^\e_t$ are smooth entire convex graphs given by $x_{n+1} = \uu_\e(x,t)$, $x \in \R^n$. 
The functions $\uu_\e$  satisfy equation \eqref{eqn-u0}. 
Since $\uu_{0,\e}$ satisfies $\alpha_0 \, |x| \leq \uu_{0,\e}(x) \leq u_{0,1}(x)$ for all $\e \in (0,1)$, 
the comparison principle implies that for any $0 < \e_1 < \e_2 <1$ we have 
$$\alpha(t)\, |x| \leq \uu_{\e_1}(x,t) \leq  \uu_{\e_2}(x,t) \leq \uu_1(x,t), \qquad (x,t) \in \R^n \times  [0,T)$$
where $T$ denotes the extinction time of $\alpha(t)$.  In particular, the
monotone limit
$\uu:= \lim_{\e \to 0} \uu_\e$
exists and satisfies 
\be\label{eqn-uue}
\alpha(t)\, |x| \leq  \uu(x,t) \leq \uu_1(x,t).
\ee
We will show next that $\uu$ is a solution of \eqref{eqn-u0} with initial data $\uu_0$. 

\begin{claim} \label{claim-1} There exists $\tau >0$ for which the limit  $\uu $ is a smooth  convex solution of 
\eqref{eqn-u0} on $\R^n \times (0,\tau)$ with  initial data $\uu_0$. 

\end{claim}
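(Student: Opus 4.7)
The plan is to derive uniform-in-$\e$ interior estimates for the smooth approximations $\bar u_\e$ on compact subsets of $\R^n\times(0,\tau]$ for some $\tau<T(\alpha_0)$, pass to the limit via Arzel\`a--Ascoli, and verify that the monotone limit $\bar u$ both solves \eqref{eqn-u0} classically and attains $\bar u_0$ as $t\to 0$. The sandwich \eqref{eqn-uue} combined with Proposition \ref{prop-comp} gives uniform $L^\infty$ bounds for $\bar u_\e$ on compact sets; convexity of each $M_t^\e$, preserved by Theorem \ref{thm-ste0} since each $\bar u_{0,\e}$ is strictly convex, upgrades this to uniform interior Lipschitz bounds. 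To promote these to $C^{1,1}$ control I would apply the local curvature bound of Proposition \ref{prop-Hloc-above}: since $\bar u_{0,\e}\to\bar u_0$ in $C^2_{\mathrm{loc}}$, the quantity $\eta H_\e$ is uniformly bounded at $t=0$ on every geodesic ball, and \eqref{eqn-bHab} propagates it forward, giving a uniform upper bound on $H_\e$ on compacts and hence, by convexity, on the full second fundamental form.

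The delicate step is the matching interior lower bound $H_\e\ge c_K>0$ that keeps \eqref{eqn-u0} uniformly parabolic. I would obtain it by tracking the geometric quantity $v_\e=-\langle F_\e,\om\rangle\langle\om,\nu_\e\rangle H_\e$ via Lemma \ref{cor-vinverse1}, whose inequality for $v_\e^{-1}$ on a convex $M_t^\e$ has divergence form with drift and a pointwise reaction bounded by $2v_\e^{-2}$. A direct computation on the explicit initial data $\bar u_{0,\e}=\bar u_0+\e(|x|^2+1)$ using \eqref{eqn-conHF} together with the bound $|D\bar u_0|\le\alpha_0$ (which follows from \eqref{eqn-cone0} via convexity) yields $\inf_{\R^n}v_\e(\cdot,0)\ge c>0$ uniformly in $\e\in(0,1)$; in fact the far-field contribution tends to the positive value $(n-1)/2$ as $|x|\to\infty$. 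Comparison with the ODE $\bar y'=2\bar y^2$ via Proposition \ref{prop-comp}---after verifying its polynomial-growth hypotheses using the $C^{1,1}$ bounds just obtained---then gives $v_\e(\cdot,t)\ge \tfrac12\inf_{\R^n}v_\e(\cdot,0)$ on a uniform time interval $[0,\tau]$ with $\tau\sim c$. Combined with the local upper bound $\langle\hat F_\e,\nu_\e\rangle=\bar u_\e/\sqrt{1+|D\bar u_\e|^2}\le C_K$ on compacts, this yields $H_\e\ge c_K>0$ on $K\times[0,\tau]$ uniformly in $\e$.

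With uniform parabolicity in hand, the concavity of the fully nonlinear operator in $D^2u$ permits the Evans--Krylov theorem to produce uniform interior $C^{2,\alpha}$ bounds, and Schauder bootstrapping then yields uniform $C^{k,\alpha}$ bounds for every $k$. A diagonal subsequence converges in $C^\infty_{\mathrm{loc}}(\R^n\times(0,\tau])$ to $\bar u$, which is consequently a smooth convex solution of \eqref{eqn-u0}. For the continuous attainment of $\bar u_0$ I would use that $\partial_t\bar u_\e=-\sqrt{1+|D\bar u_\e|^2}/H_\e$ is uniformly bounded on compacts up to $t=0$, giving $|\bar u_\e(x,t)-\bar u_{0,\e}(x)|\le C_K\,t$ on $K\times[0,\tau]$; sending $\e\to 0$ and then $t\to 0$ delivers $\bar u(\cdot,t)\to\bar u_0$ locally uniformly. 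The principal obstacle is the lower bound on $H_\e$: the ultra-fast-diffusion character of \eqref{eqn-u0} makes the mean curvature the quantity most prone to degeneration, and identifying $v_\e$ as a geometric functional whose inverse obeys a tame (quadratic rather than worse) differential inequality is the technical heart of the construction.
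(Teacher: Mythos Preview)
Your proof is correct and follows the paper's strategy closely; the only substantive difference is in how you obtain the uniform lower bound on $v_\e=\langle\hat F_\e,\nu_\e\rangle H_\e$. You invoke Lemma~\ref{cor-vinverse1} for $v_\e^{-1}$ and compare with the nonlinear ODE $y'=2y^2$, whereas the paper works directly with $v_\e$ via Lemma~\ref{lem-fhatnuH}: convexity makes the last term of \eqref{eqn-fhatnuH} nonnegative and $\langle\omega,\nu_\e\rangle^2\le 1$, so $v_\e$ satisfies the linear supersolution inequality $\partial_t v_\e-\nabla_i(H_\e^{-2}\nabla_i v_\e)\ge -2$, giving the barrier $v_\e\ge c_0-2t$ and hence $\tau=c_0/4$. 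The two routes are equivalent (your ODE integrates to the same linear bound), but the paper's avoids the quadratic reaction and the need to bootstrap the zeroth-order coefficient. One point worth flagging in your write-up: when you appeal to Proposition~\ref{prop-comp}, the upper coefficient bound $a_{ij}\le\Lambda(|x|^2+1)$ needs $H_\e\gtrsim(1+|x|)^{-1}$, which at first glance is what you are proving; the resolution is that for each fixed $\e>0$ the surfaces $M_t^\e$ are $\delta_\e$-starshaped by Theorem~\ref{thm-ste0}, so $H_\e\ge\delta_\e/|F_\e|$ already holds with an $\e$-dependent constant, enough to run the comparison whose conclusion is then $\e$-independent. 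Your treatment of the initial lower bound on $v_\e$ (including the far-field value $(n-1)/2$) and the subsequent regularity and convergence to $\bar u_0$ is more detailed than the paper's ``standard regularity arguments'' and is fine as written.
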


\begin{proof}[Proof of Claim \ref{claim-1}] Consider the approximations $\uu_\e$ and denote by $H_\e$ the mean curvature of
$M^\e_t$. Set $v_\e := \Fhne \, H_\e$, where  $ \Fhne$ denotes the quantity 
$\Fhne := - \langle F_\e , \omega \rangle \, \langle  \omega, \nu_\e  \rangle$
on  $M_t^\e$. Each $v_\e$ satisfies the equation 
\eqref{eqn-fhatnuH} and since each  $M_t^\e$ is convex the last term on the righthand side of \eqref{eqn-fhatnuH} is nonnegative.
Since  $\langle \omega, \nu_\e \rangle \leq 1$, we  conclude that each $v_\e$ satisfies
\be\label{eqn-fhatnuH22}
 \frac{\pp}{\pp t} v_\e- \D_i \big ( \frac 1{H^2} \D_i v_\e \big ) \geq   -  2. 
\ee
Moreover, our initial conditions on $\uu_0$ guarantee that $v_\e \geq c_0 >0$ for a uniform in $\e$ constant $c_0$. The differential inequality 
\eqref{eqn-fhatnuH22} implies that 
\be\label{eqn-stvep}
v_\e:= \Fhne \, H_\e  \geq c_0/2 >0, \qquad \mbox{on} \,\, M_t^\e, \,\, t \in [0,\tau]
\ee
if we choose $\tau:= c_0/4$.  On the other hand, our initial
assumption that $H  \, \Fom  \leq C_0$ on $M_0$   implies that   $H  \, |F| \leq C_1$ on $M_0$,  which 
in turn gives a uniform in $\e$ bound $H_\e   \leq C_2 /(1+|x|)$  on $M^\e_0$,  for a uniform in $\e$ constant $C_2$.  
Proposition \ref{prop-Hloc-above}, implies  the bound 
\be\label{eqn-baH100}
H_\e \leq C\, (1+|x|)^{-1}, \qquad \mbox{on}  \,\, M^\e_t,  \,\, t \in [0,\tau].
\ee 
Combining the two estimates   yields  
\be\label{eqn-He}
0 < c_R \leq H_\e (\cdot,t)   \leq  C, \qquad \mbox{on} \,\, |x| \leq R, \,\,  t \in [0,\tau], 
\ee  for  uniform in $\e$ and $t$  constants
$c_R, C$.  These bounds  guarantee that the equation \eqref{eqn-u0} is uniformly parabolic in $\epsilon$ on compact sets and by standard regularity arguments the limit $\uu$ 
will be a smooth convex solution of  \eqref{eqn-u0}  with initial data $\uu_0$. By passing to the limit in \eqref{eqn-stvep}
and \eqref{eqn-baH100} we conclude that 
\be\label{eqn-steH}
 \Fhn \, H   \geq c >0 \qquad \mbox{and} \quad H \leq C \, (1+|x|)^{-1},   \qquad \mbox{on} \,\, M_t, \,\, t \in [0,\tau]
\ee
for $c:=c_0/2$. 
\end{proof} 

It remains to show that the solution $\uu$ satisfies the upper bound $\uu(x,t) \leq \alpha(t) \, |x| + \kappa$ in \eqref{eqn-conet}.
To this end, we will first show that $\uu(\cdot,t)$ has linear growth at infinity which will allow us to apply the comparison
principle Proposition \ref{prop-comp}. 

\begin{claim}\label{claim-lba} The limit $\uu$ satisfies the linear bound $$\uu(x,t) \leq \beta \, |x| + \kappa_1$$
for some constants $\beta >0$ and $\kappa_1 >0$. 

\end{claim}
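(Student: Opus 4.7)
The plan is to exploit the strict monotonicity in $t$ of each approximation $\uu_\e$ and then pass to the pointwise limit as $\e \to 0$. For every $\e \in (0,1)$, the solution $\uu_\e$ is smooth and the evolving surface $M_t^\e$ is convex by Theorem \ref{thm-ste0}, hence its mean curvature $H_\e = \mbox{div}\bigl(D\uu_\e/\sqrt{1+|D\uu_\e|^2}\bigr)$ is strictly positive. Equation \eqref{eqn-u0} then yields $(\uu_\e)_t = -\sqrt{1+|D\uu_\e|^2}/H_\e < 0$, so each $\uu_\e$ is strictly decreasing in $t$ at every point of $\R^n$.

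Combining this monotonicity with the explicit initial datum $\uu_\e(\cdot,0) = \uu_0 + \e(|\cdot|^2+1)$ and the upper cone bound $\uu_0(x) \le \al_0 |x| + \kappa$ from \eqref{eqn-cone0}, I would obtain
\be
\uu_\e(x,t) \;\le\; \uu_\e(x,0) \;\le\; \al_0 |x| + \kappa + \e(|x|^2+1), \qquad (x,t) \in \R^n \times [0,\tau].
\ee
By the pointwise (monotone) convergence $\uu_\e(x,t) \to \uu(x,t)$ as $\e \to 0$ established in Claim \ref{claim-1}, fixing $(x,t)$ and letting $\e \to 0$ produces
\be
\uu(x,t) \;\le\; \al_0 |x| + \kappa
\ee
on $\R^n \times [0,\tau]$, which proves the claim with $\beta = \al_0$ and $\kappa_1 = \kappa$.

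There is no essential obstacle in this step. The monotonicity $(\uu_\e)_t < 0$ is immediate from \eqref{eqn-u0} and the positivity of $H_\e$, and the quadratic perturbation $\e(|x|^2+1)$ in the initial data vanishes pointwise in the limit without affecting the linear growth bound. The real purpose of the claim, and what will be used next, is that $\uu$ now has polynomial (in fact linear) growth at spatial infinity; this is precisely the structural hypothesis required to legitimately apply the comparison principle of Proposition \ref{prop-comp} against the evolving cone $\al(t)|x|+\kappa$, which will then upgrade the preceding crude bound to the sharp upper cone estimate $\uu(x,t) \le \al(t)|x|+\kappa$ of \eqref{eqn-conet}.
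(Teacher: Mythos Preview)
Your argument is correct and considerably more elementary than the paper's own proof of this claim. You use only the monotonicity $(\uu_\e)_t<0$, which follows immediately from \eqref{eqn-u0} and the strict positivity of $H_\e$ on the convex approximants, together with the explicit initial datum $\uu_\e(\cdot,0)=\uu_0+\e(|\cdot|^2+1)$ and the pointwise limit $\e\to 0$. This yields the linear bound $\uu(x,t)\le \al_0|x|+\kappa$, which is exactly what is needed to feed into Proposition~\ref{prop-comp}.

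The paper takes a different and more elaborate route: it characterizes the subgraph of a cone as the complement of a family of balls, places these balls in the epigraph of the approximations $\uu_{0,\e}$, evolves them by IMCF as expanding spheres $B_{\rho(t)}$ with $\rho(t)=\rho\,e^{t/n}$, and uses the comparison principle between the approximating closed hypersurfaces $\Sigma^i(t)$ and these spheres. This produces a time-dependent slope $\beta(t)=\sqrt{e^{-2t/n}(1+\al_0^2)-1}<\al_0$, i.e.\ a somewhat sharper bound than yours. However, the paper itself remarks that $\beta(t)>\alpha(t)$, so this refinement still does not give the optimal cone bound and is not exploited downstream; only some linear growth is required to invoke Proposition~\ref{prop-comp}. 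Your approach therefore buys simplicity at the cost of a cruder (but entirely sufficient) constant, while the paper's barrier argument records a quantitative decay of the slope that plays no further role.
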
 

\begin{proof}[Proof of Claim \ref{claim-lba}] 
First notice that for any pair $(\beta, \kappa)$ with $\beta>0, \kappa \in \R$ an elementary calculation shows that
the subgraph of the conical surface $\zeta (x) = \beta \, r + \kappa, \, r=|x|$, is equal to the complement
of a natural family of spheres lying above it:
\begin{equation}\label{equ-charcone}
\{(x,x_{n+1} ) \in \R^n \times \R   \, | \,  x_{n+1}  <  \beta |x| + \kappa\} =
\bigcap_{\rho>0, \tilde\kappa \geq\kappa}\Big( \R^{n+1}\setminus B_{\rho}(0, \rho\sqrt{1+\beta^2} +\tilde\kappa)\Big).
\end{equation}
Since $\uu_{0}(x) \leq \alpha_0 |x| + \kappa$, for each $\rho_0 >0$ and $\tilde\epsilon>0$ we may now choose 
$\epsilon_0 = \epsilon_0(\rho_0, \tilde\epsilon))>0$ 
such that for all $0<\epsilon <\epsilon_0, 0<\rho<\rho_0$ the balls
$B_{\rho}(0, \rho\sqrt{1+\alpha_0^2} +\tilde\kappa + \tilde\epsilon),\, \tilde\kappa \geq \kappa,$ are contained in the epigraph 
\begin{equation*}
\{(x,x_{n+1}) \in \R^n \times \R    \, | \, x_{n+1}  > \bar u_{0, \epsilon}(x)\}
\end{equation*}
of the approximating functions $\bar u_{0, \epsilon}$ given by \eqref{eqn-uee}. Using the barrier principle for 
IMCF applied to the resulting graphs of $\bar u_{t, \epsilon}$ and the balls expanding by IMCF,
$B_{\rho(t)}(0, \rho\sqrt{1+\alpha_0^2} +\tilde\kappa + \tilde\epsilon), \, \rho(t) = \rho \exp(t/n)$ 
we conclude from the monotone convergence of the $\bar u_{t, \epsilon}$  in the limit $\tilde\epsilon \to 0$ 
that the balls $B_{\rho(t)}(0, \rho\sqrt{1+\alpha_0^2} +\tilde\kappa), \, \tilde\kappa \geq \kappa$ 
are contained in the epigraph of $\bar u_{t} = \lim \bar u_{t, \epsilon}$ for each $\rho >0$. In other words,
\begin{equation*}\label{equ-baru1}
\{(x, x_{n+1}) \in \R^n \times \R \, | \, x_{n+1}  = \bar u(x,t)\} \subset
\bigcap_{\rho>0, \tilde\kappa \geq\kappa}\Big( \R^{n+1}\setminus B_{\rho(t)}(0, \rho\sqrt{1+\alpha_0^2} +\tilde\kappa)\Big).
\end{equation*}
Now note that
\begin{equation*}
\rho\sqrt{\alpha_0^2 +1} = \rho(t)\exp(-t/n)\sqrt{\alpha_0^2 +1} = \rho(t) \sqrt{\beta(t)^2 +1},
\end{equation*}
where 
\begin{equation*}
\beta(t) = \sqrt{\exp(\frac{-2t}{n})(1 + \alpha_0^2) -1}, \quad t\in [0, n\log\sqrt{1+\alpha_0^2}) \cap [0, T_{\max}(\bar u)).
\end{equation*}
Since $\cap_{\rho(t)} =\cap_{\rho>0}$ we get
\begin{equation*}\label{equ-baru2}
\{(x,x_{n+1} ) \in \R^n \times \R\,  | \, x_{n+1}  = \bar u(x,t)\} \subset
\bigcap_{\rho(t), \tilde\kappa \geq\kappa}\Big( \R^{n+1}\setminus B_{\rho(t)}(0, \rho(t)\sqrt{1+\beta^2(t)} +\tilde\kappa)\Big)
\end{equation*}
which implies the claim
\begin{equation*}
\bar u(x, t) \leq \beta(t) |x| + \kappa 
\end{equation*}
in view of (\ref{equ-charcone}) as required. Notice that $\beta(t) > \alpha(t)$ for $t>0$ such that this estimate cannot yet yield 
the optimal upper bound. \end{proof}

It remains to show that $\uu(x,t) \leq \alpha(t) \, |x| + \kappa$. This  simply follows from the next claim, by comparing  
$\uu$ with the conical solution $\C_2(x,t):= \alpha(t) \, |x| + \kappa$. 

\begin{claim}\label{claim-lcomp} Assume that $\uu_1, \uu_2$ are two smooth and convex entire graph solutions of equation
\eqref{eqn-u0} on $\R^n \times (0,\tau]$ for some $\tau >0$ which satisfy the bounds 
\be\label{eqn-linear}
 \alpha \, |x| \leq \uu_i(x,t) \leq \beta \, |x| + \kappa, \qquad i=1,2, \qquad \mbox{on}\,\, \R^n \times [0,\tau]
 \ee
for some constants $0 < \alpha \leq \beta $ and $\kappa >0$.  Assume in addition that $\uu_i$, $i=1,2$  both satisfy conditions 
\eqref{eqn-steH}.  If $\uu_1(\cdot,0) \leq \uu_2(\cdot,0)$, then  
$\uu_1(\cdot,t) \leq \uu_2(\cdot,t)$ on $\R^n \times (0,\tau]$. 

\end{claim}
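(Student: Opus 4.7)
The plan is to deduce the claim from the comparison principle of Proposition~\ref{prop-comp} applied to $f := \bar u_1 - \bar u_2$. First I would rewrite \eqref{eqn-u0} in the fully nonlinear form $\bar u_t = F(D^2\bar u, D\bar u)$ with
\[
F(M,p) = -\frac{1+|p|^2}{L(M,p)}, \qquad L(M,p) := \Bigl(\delta_{ij} - \frac{p_ip_j}{1+|p|^2}\Bigr) M_{ij},
\]
and linearize along the segment $(M_s,p_s) := (sD^2\bar u_1 + (1-s)D^2\bar u_2,\; sD\bar u_1 + (1-s)D\bar u_2)$ for $s\in[0,1]$. The mean value theorem then gives
\[
f_t = a^{ij}(x,t)\, D_{ij} f + b^i(x,t)\, D_i f \qquad \text{on } \R^n \times (0,\tau]
\]
with $a^{ij} = \int_0^1 F_{M_{ij}}(M_s,p_s)\,ds$, $b^i = \int_0^1 F_{p_i}(M_s,p_s)\,ds$, and no zero-order term. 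To justify this linearization I must verify $L(M_s,p_s) > 0$ along the path, which follows from convexity of each $\bar u_i$ (so $M_1, M_2 \succeq 0$) together with positive-definiteness of the symbol matrix $\delta_{ij} - (p_s)_i(p_s)_j/(1+|p_s|^2)$, whose smallest eigenvalue is $1/w_s^2$, $w_s := \sqrt{1+|p_s|^2}$.

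Next I would establish the coefficient bounds. The upper bound in \eqref{eqn-linear} combined with convexity yields $|D\bar u_i| \leq \beta$, hence $w_i, w_s$ are bounded between positive constants. From \eqref{eqn-steH} one has $H_i \leq C(1+|x|)^{-1}$ directly, and the lower estimate $H_i\, \hat F_i\cdot\nu_i \geq c$ together with $\hat F_i\cdot\nu_i = \bar u_i/w_i \leq C'(1+|x|)$ (from \eqref{eqn-linear}) yields $H_i \geq c'(1+|x|)^{-1}$. Writing $L_i = w_i H_i$ and using the eigenvalue bounds on the symbol matrix gives $w_i H_i = L_i \leq \mathrm{tr}(M_i) \leq w_i^2 L_i = w_i^3 H_i$, so $\mathrm{tr}(M_i)$ is of order $(1+|x|)^{-1}$. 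A direct estimate then yields
\[
L(M_s,p_s) \;\geq\; w_s^{-2}\bigl(s\,\mathrm{tr}(M_1) + (1-s)\mathrm{tr}(M_2)\bigr) \;\geq\; \frac{c''}{1+|x|},
\]
so $H_s := L(M_s,p_s)/w_s$ is uniformly (in $s$) two-sidedly comparable to $(1+|x|)^{-1}$.

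With this in hand I would verify the hypotheses of Proposition~\ref{prop-comp}. From $F_{M_{ij}}(M,p) = H^{-2}(\delta_{ij}-p_ip_j/(1+|p|^2))$, the eigenvalues of $a^{ij}(x,t)$ all lie between two positive multiples of $(1+|x|)^2$, confirming \eqref{eqn-coeff1} with ellipticity constants $\lambda,\Lambda$ independent of $(x,t)$. Differentiating $F = -w^2/L$ and using $|M_s| \leq \mathrm{tr}(M_s) \leq C(1+|x|)^{-1}$ together with $L_s \geq c(1+|x|)^{-1}$ yields $|F_{p_i}| \leq C(1+|x|)$, so \eqref{eqn-coeff2} holds with $c\equiv 0$. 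Finally \eqref{eqn-linear} forces the linear growth $f(x,t) \leq \bar u_2(x,t) \leq \beta|x| + \kappa \leq C(|x|^2+1)^{1/2}$. Since $f(\cdot,0) \leq 0$ by assumption, Proposition~\ref{prop-comp} applied on $\R^n \times [t_0,\tau]$ for arbitrary small $t_0 > 0$ (and then letting $t_0 \to 0$ using continuity of $\bar u_i$ up to $t=0$) gives $f \leq 0$ on $\R^n \times (0,\tau]$, proving the claim.

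The main technical point is the uniform lower bound $L(M_s,p_s) \geq c(1+|x|)^{-1}$ along the interpolating path. Because $L$ is not linear in $(M,p)$, this ellipticity cannot simply be interpolated from the values at $s=0,1$; instead, convexity of the graphs (yielding $M_i \succeq 0$) together with the two-sided control on $H_i$ furnished by \eqref{eqn-steH} and the uniform bound on the gradients $|D\bar u_i| \leq \beta$ are essential to control $L(M_s,p_s)$ uniformly in $s$, and this is what makes the coefficients satisfy the structural bounds \eqref{eqn-coeff1}--\eqref{eqn-coeff2} of the comparison principle in the non-compact setting.
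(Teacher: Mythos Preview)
Your proof is correct and follows the same route as the paper: linearize along the segment $\bar u_s = s\bar u_1 + (1-s)\bar u_2$, verify the structural bounds \eqref{eqn-coeff1}--\eqref{eqn-coeff2}, and apply Proposition~\ref{prop-comp}. In fact you are more careful than the paper about one point the paper glosses over, namely the uniform lower bound on $L(M_s,p_s)$ along the interpolating path; your argument via $L(M_s,p_s)\geq w_s^{-2}\bigl(s\,\mathrm{tr}\,M_1+(1-s)\,\mathrm{tr}\,M_2\bigr)$ using $M_i\succeq 0$ and $\mathrm{tr}\,M_i\geq w_iH_i$ is the right way to fill this in. One trivial slip: the growth bound on $f=\bar u_1-\bar u_2$ should come from $f\leq \bar u_1-\alpha|x|\leq(\beta-\alpha)|x|+\kappa$ (or simply $|f|\leq 2(\beta|x|+\kappa)$), not from $f\leq\bar u_2$; this does not affect the argument.
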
 

\begin{proof}[Proof of Claim \ref{claim-lcomp}] To simplify the notation for any function $\uu$ on $\R^n \times [0,\tau]$,  we set 
 \be\label{eqn-deff}
 \F(D^2\uu, D\uu):= -  \sqrt{1+|D\uu|^2} \left (  \mbox{div}  \left ( \frac {D\uu}{\sqrt{1+ |D\uu|^2}} \right ) \right )^{-1}
 = - \frac{\sqrt{1+|D\uu|^2} }{H}.
 \ee
Since both $\uu_1$ and $\uu_2$ satisfy
\begin{equation}\label{eqn-uu} 
\frac {\partial}{\partial t} \uu  =  \F(D^2\uu, D\uu)
\end{equation}
setting $\uu_s :=  s\, u_1 + (1-s) u_2$, we have 
\bee\begin{split}
\frac {\partial}{\partial t} (\uu_1 - \uu_2) &= \F(D^2\uu_1, D\uu_1) - \F(D^2\uu_2, D\uu_2) \\
&= \int_0^1 
\frac {d}{ds}  \F(D^2 \uu_s, D\uu_s)\, ds \\
& =  \int_0^1  \frac {\partial \F}{\partial \sigma_{ij}}  \, D_{ij}  (\uu_1 - \uu_2) +  \frac {\partial \F}{\partial p_i}  \, D_i (\uu_1 - \uu_2) \, ds\\
&= a_{ij} \, D_{ij}  (\uu_1 - \uu_2) + b_i \, D_i (\uu_1 - \uu_2)
\end{split} 
\eee
where 
$$a_{ij} :=  \int_0^1  \frac {\partial \F}{\partial \sigma_{ij}} (D^2 \uu_s, D\uu_s)\, ds, \qquad b_i := \int_0^1   \frac {\partial \F}{\partial p_i}  (D^2 \uu_s, D\uu_s)\, ds.
$$
The  uniqueness assertion of our theorem will directly follow from Proposition \ref{prop-comp}  if we show that the coefficients 
$a_{ij}$ and $b_i$ satisfy conditions \eqref{eqn-coeff1} and \eqref{eqn-coeff2}. To this end, we observe using \eqref{eqn-deff}
that
$$\frac {\partial \F}{\partial \sigma_{ij}} =  \frac{\sqrt{1+|D\uu|^2} }{H^2} \, \frac {\partial H}{\partial \sigma_{ij}}.$$
Since
$$H = \frac 1{\sqrt{1+|D\uu|^2}} \, \left (\delta_{ij} - \frac{D_i \uu D_j \uu}{1+|D\uu|^2} \right ) D_{ij} \uu$$
we conclude that
$$\frac {\partial \F}{\partial \sigma_{ij}} =  \frac 1{H^2 } \,  
\left (\delta_{ij} - \frac{D_i \uu D_j \uu}{1+|D\uu|^2} \right ).$$
Moreover, a direct calculation shows that
$$ \big | \frac {\partial \F}{\partial p_i} \big | \leq C \, |x|^2 \, \frac{ |D^2 \uu| }{(1+|D\uu|^2)^{1/2}}.$$
Observe next that  \eqref{eqn-steH}, \eqref{eqn-conet} and the convexity of $\uu$ imply the uniform
bound $H \geq c/(1+|x|)$ on $M_t$, $t \in [0,\tau]$ for some $c >0$ and we also have the  uniform  bound from above 
$H \leq C/(1+|x|)$. It is easy to conclude then that $a_{ij}$ satisfies 
$$ \lambda \,\xi^2  (1+|x|^2) \leq a_{ij}(x,t)  \, \xi_i \xi_j \leq  \Lambda \,\xi^2 \, (1+|x|^2)$$
for some positive constants  $\lambda, \Lambda$. 
Also, convexity implies the bound $|D^2 \uu| /(1+|D\uu|^2)^{1/2} \leq C\, H$ which in turn gives 
$$|b_i(x,t)  | \leq  C \, |x|^2 \, \frac{ |D^2 \uu| }{(1+|D\uu|^2)^{1/2}} \leq C \, |x|^2 \, H \leq C\, (1+|x|).$$
We can then apply  Proposition \ref{prop-comp} to conclude that $\uu_1 \leq \uu_2$, as claimed. 
\end{proof}

Let us now conclude the proof of the Theorem  \ref{thm-ste}.  We have  shown above  that the limit $\uu$ is a smooth convex 
solution of \eqref{eqn-u0} which satisfies conditions \eqref{eqn-conet} and  \eqref{eqn-steH}, which in turn imply 
\eqref{eqn-steHt}. The uniqueness assertion of the theorem readily follows by Claim \ref{claim-lcomp} since \eqref{eqn-conet}
and \eqref{eqn-steHt} also imply  \eqref{eqn-steH}. 

\end{proof} 

We will next compute the  behavior at infinity of  $v:= \Fhn \, H$, where $\Fhn:= - \Fom \, \omn$. 
This will be  crucial for  the proof of Theorem 
\ref{prop-Hbelow} which is the  main a'priori estimate in this work. 

\begin{prop}\label{prop-asb} Under the assumptions of Theorem \ref{thm-ste}, the function $v:= \Fhn \, H$ on $M_t$
satisfies  the asymptotic behavior 
\be\label{eqn-good}
\lim_{|F(z,t)| \to +\infty} v(F(z,t),t) = \gamma(t), \qquad \mbox{for all} \,\, t \in (0, \tau]
\ee
with $\gamma(t)$ given by \eqref{eqn-gammat}. 

\end{prop}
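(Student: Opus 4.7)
The plan is to use the scaling invariance of equation \eqref{eqn-u0} together with the cone sandwich \eqref{eqn-conet} to reduce the asymptotic behavior of $v$ to the fact, established in Section \ref{sec-cone}, that $v \equiv \gamma(t)$ on the conical solution with vertex at the origin.

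First I would check that for every $\lambda > 0$ the rescaled function $\bar u_\lambda(y,t) := \lambda^{-1} \bar u(\lambda y, t)$ is again a smooth convex solution of \eqref{eqn-u0} on $\R^n \times (0,\tau]$, and that the geometric quantity $v$ associated with $M_t^\lambda = \{(y, \bar u_\lambda(y,t))\}$ satisfies $v_\lambda(y,t) = v(\lambda y, t)$; this follows from the scalings $H_\lambda(y,t) = \lambda H(\lambda y, t)$, $\langle F_\lambda, \omega\rangle(y,t) = \lambda^{-1} \langle F, \omega\rangle(\lambda y, t)$ and the scale invariance of $\nu$. Rescaling \eqref{eqn-conet} would then give
$$\alpha(t)\, |y| \le \bar u_\lambda(y, t) \le \alpha(t)\, |y| + \kappa/\lambda,$$
so $\bar u_\lambda(\cdot, t) \to \alpha(t)\,|\cdot|$ uniformly on $\R^n$ as $\lambda \to \infty$. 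For a sequence $x_k \in \R^n$ with $|x_k| \to \infty$, setting $\lambda_k := |x_k|$ and $y_k := x_k/\lambda_k \in S^{n-1}$, the identity $v(x_k, t) = v_{\lambda_k}(y_k, t)$ reduces the claim to uniform convergence $v_\lambda(\cdot, t) \to \gamma(t)$ on the unit sphere.

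The main step is then to upgrade uniform $C^0$ convergence of $\bar u_\lambda$ to $C^2_{\mathrm{loc}}$ convergence on $\R^n \setminus \{0\}$. Fixing $t_0 \in (0, \tau]$, a short neighborhood $I$ of $t_0$ in $(0,\tau]$, and an annulus $A_R = \{R^{-1} \le |y| \le R\}$, I would use the bound \eqref{eqn-steHt} in its rescaled form $c \le H_\lambda \langle F_\lambda, \omega\rangle \le C$, together with the sandwich, to produce two-sided bounds on $H_\lambda$ on $A_R$ that are independent of $\lambda \ge 1$; convexity of $\bar u_\lambda$ combined with the sandwich yields a uniform gradient bound on compact subsets of $\R^n$. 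With these bounds equation \eqref{eqn-u0} becomes uniformly parabolic and concave in $D^2 \bar u$ on $A_R \times I$, so Evans--Krylov regularity together with Schauder theory provides uniform $C^{2,\alpha}$ estimates for $\bar u_\lambda$ on $A_R \times I$. Arzel\`a--Ascoli together with uniqueness of the pointwise limit would then force $\bar u_\lambda \to \alpha(t)\,|y|$ in $C^{2}_{\mathrm{loc}}(\R^n \setminus \{0\})$ along the full family, and the uniform convergence of $v_\lambda(\cdot, t)$ to $\gamma(t)$ on $S^{n-1}$ follows by continuity of $v_\lambda$ as a function of $\bar u_\lambda$ and its first two derivatives.

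The hard part will be this uniform parabolicity step: the global (not merely asymptotic) lower bound $H \langle F, \omega\rangle \ge c > 0$ coming from Theorem \ref{thm-ste} is precisely what prevents $H_\lambda$ from degenerating on fixed annuli as $\lambda \to \infty$, and one must be careful to carry out the regularity argument on annuli bounded away from the origin of the rescaled picture, where the limiting cone itself is singular. Once uniform parabolicity and the resulting $C^{2,\alpha}$ estimates are in hand, the remaining passage to the limit and the identification of the limit of $v_\lambda$ with its value on the cone reduce to a standard compactness argument.
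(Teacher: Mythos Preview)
Your scaling-and-compactness argument is correct and follows a genuinely different route from the paper's proof. The paper instead passes to cylindrical coordinates $s=\log|x|$, writing $\bar u(x,t)=\alpha(t)\,|x|\,(1+\hat u(s,\psi,t))$, and shows that the resulting equation for $\hat u$ is uniformly parabolic on unit cylinders in $(s,\psi)$. The cone sandwich gives $0\le \hat u\le C e^{-s}$, and the bound $|\langle F,\nu\rangle|\le C$ (preserved by the maximum principle) gives $|\hat u_s|\le C e^{-s}$; the paper then applies the Tian--Wang interior estimate to obtain $\|e^{s}\hat u\|_{C^{2,\alpha}}\le C$, which yields the quantitative decay $v-\gamma(t)=O(|x|^{-1})$, uniform on $[\tau_0,\tau]$.

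Your approach trades this explicit rate for a cleaner soft argument: the scaling $\bar u_\lambda(y,t)=\lambda^{-1}\bar u(\lambda y,t)$ replaces the cylindrical change of variables, and the observation that convexity together with the two-sided bound on $H\langle F,\omega\rangle$ yields uniform $C^{1,1}$ bounds on $\bar u_\lambda$ on fixed annuli (since $0\le D^2\bar u_\lambda$ and the trace is controlled by $H_\lambda$) plays exactly the role of the paper's bounds on $\hat u$ and $\hat u_s$. With that in hand, concavity of $\mathcal F$ in $D^2\bar u$ and Evans--Krylov give the $C^{2,\alpha}$ compactness you need, and uniqueness of the $C^0$ limit forces convergence of the full family. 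Since the proposition only asserts the limit (no rate is needed later---only the qualitative fact that $\hat w=(w-1)_+$ is compactly supported for each $t>0$), your argument suffices and is arguably more transparent; the paper's version has the advantage of giving a decay rate and uniformity in $t$ essentially for free.
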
   

\begin{proof}   We will use the graph representation $x_{n+1}=\uu(x,t)$, $(x,t) \in \R^n \times [0,T)$,  of the solution  $M_t$ of \eqref{eqn-IMCF} 
for $t \in (0,\tau]$, as given by Theorem \ref{thm-ste} and we will show that 
\be\label{eqn-good2}
\lim_{|x| \to +\infty} v(x, \bar u(x,t),t) = \gamma(t), \qquad \mbox{uniformly on}\,\, [\tau_0, \tau]
\ee
for all $\tau_0 \in (0, \tau/2)$, which readily yields \eqref{eqn-good}. 
The function  $\uu$ satisfies the equation \eqref{eqn-u0} and conditions \eqref{eqn-conet} and \eqref{eqn-steH}.  We may then consider
$H$ and $\Fhn$  as functions of $(x,t) \in \R^n \times  [0,\tau]$. Throughout the proof  $c, C$ will denote positive constants which may change from line to line but always remain  uniform in $t$,  for $t \in [0,\tau]$. 

Since  $\Fhn  = \uu/ (1+|D\uu|^2)^{1/2}$,  by   \eqref{eqn-conet} 
 we have $\Fhn \leq C \, |x|$, on $[0,\tau]$. Combining  this with   \eqref{eqn-steH}  yields
\be\label{eqn-Hgood2}
c\, (1+|x|)^{-1}  \leq H(x ,t)  \leq C\, (1+|x|)^{-1}, \qquad \mbox{on } \, \, \R^n \times  [0,\tau]. 
\ee
 In addition, \eqref{eqn-conet} and the convexity of $\uu$ imply the gradient estimate
\be\label{eqn-grad}
\sup_{  \R^n \times [0,\tau]} |D \uu (x,t)| \leq C.
\ee

The function $\uu$ satisfies  \eqref{eqn-uu},  where the fully nonlinear operator  $\F$ is given by \eqref{eqn-deff}. 
For this proposition we will use an a priori estimate for fully-nonlinear parabolic equations which was proven by G. Tian and X-J. Wang in \cite{TW}
(Theorem  1.1 in \cite{TW}) to show that the mean curvature $H$ of our surface remains sufficiently close to that 
of the cone
$\C_1(x,t) = \alpha (t)\, |x|$,  for $|x|$ sufficiently large,  because of condition \eqref{eqn-conet}. To this end, let 
$\tilde u$ be the function defined
by
\be\label{eqn-baru} 
\uu(x,t) = \alpha(t) \, |x| \, \big ( 1 + \tilde u(x,t) \big ). 
\ee

\smallskip
We notice that the ellipticity of the operator $\F$  depends on  $H(x,t) \sim |x|^{-1}$,  for
$|x|$ large.  Hence, equation \eqref{eqn-uu} becomes singular as $|x| \to \infty$. To avoid this issue it is more
convenient to work in cylindrical coordinates for $|x| \geq \rho$, with $\rho$  large.  Let $\tilde u$ 
be the function defined by \eqref{eqn-baru} and express $\tilde u$ in polar coordinates $\tilde u(r,\theta_1,\cdots, \theta_{n-1},t)$,
$r=|x|$.
We introduce cylindrical $(s,\theta_1,\cdots, \theta_{n-1})$ with 
$s:=\log r$   and define the function $\uuu(s,\psi ,t)$ in terms of $\tilde u$, setting
$${\tilde  u} (r,\psi,t)=  \uuu(s,\psi,t), \qquad \quad s:=\log r, \,\, \psi:= (\theta_1, \cdots, \theta_{n-1}) \in S^{n-1}.$$
It follows by a direct calculation that $\uuu$ satisfies an equation of the form 
$$\uuu_t = \hat \F(D^2 \uuu, D \uuu, \uuu)$$
where $D^2 \uuu := D^2_{s,\psi} \uuu $ and $D \uuu := (\uuu_s, D_\psi \uuu)$ denote first and second derivatives 
with respect to the cylindrical variable $(s,\psi)$. The nonlinearity $\hat \F$ also depends on $\gah$. 

Observe first that \eqref{eqn-conet} implies the bound
\be\label{eqn-hatu2} 
0 \leq \uuu (s,\psi, t) \leq \frac{\kappa}{\alpha(t)} \, e^{-s} \leq C\, e^{-s}, \qquad \mbox{on } \, \, \R \times S^{n-1} \times  [0,\tau]
\ee
with $C=C(\tau)$. 
Also, a  direct calculation shows that 
\be\label{eqn-fnu15}
\Fn = \frac {\alpha(t) \,  e^s\, \uuu_s}{\sqrt{1+ \alpha^2(t) (1+ \uuu+ \uuu_s)^2 + |D_\psi \uuu|^2)}}
\ee
where by  \eqref{eqn-grad}, $  \sqrt{1+ \alpha^2(t) (1+ \uuu + \uuu_s)^2 + |D_\psi \uuu|^2)}  = 1 + |D \uu|^2  \leq C$.  
On the other hand, the ondition $|\Fn | \leq C_0$ on $M_0$ (see in \eqref{eqn-fn00}) and the maximum principle on the evolution 
of $\Fn$ given in Lemma \ref{lem-HI1}  implies that $| \Fn| \leq C$ on $M_t$ for $t \in [0,\tau]$. Hence, \eqref{eqn-fnu15} implies the bound
\be\label{eqn-hatu5} 
0 \leq \uuu_s(\cdot,t)  \leq C  \, e^{-s},  \qquad \mbox{on } \, \, \R \times S^{n-1}  \times  [0,\tau]. 
\ee
For any $s_0 \geq 0$, and $\tau_0 \in (0,\tau)$, we  set  
$$Q_{s_0,\tau_0}^2:= [s_0-2, s_0+2] \times S^{n-1}  \times [\tau_0/2, \tau], \quad  Q_{s_0,\tau_0}^1:= [s_0-1, s_0+1]  \times S^{n-1}  \times [\tau_0, \tau]$$
so that $Q_{s_0,\tau_0}^1 \subset     Q_{s_0,\tau_0}^2$. 
It is not difficult to verify (using  \eqref{eqn-hatu2} and \eqref{eqn-hatu5}) that  the nonlinearity $\hat \F$ satisfies the assumptions  of Theorem 1.1 in \cite{TW} on $Q_{s_0,\tau_0}^2$, for any $s_0 \geq 0$
with bounds that are independent from $s_0$ (as long as $s_0 \geq 0$).  It follows that from Theorem 1.1 in \cite{TW}  that for any $s_0 \geq 0$
$$
\| D^2_{s,\psi} \uuu \|_{C^{\alpha,\alpha/2}(Q_{s_0,\tau_0}^1)} \leq C_{\tau_0}   \,\,  \| \uuu \|_{L^\infty( Q_{s_0,\tau_0}^2)}
$$
for an exponent $\alpha >0$.  
Here   $C_{\tau_0}$ depends on  the initial data, $\tau$ and $\tau_0$,   but is independent of $s_0$. This also implies the bound
\be\label{eqn-c2alpha5}
\| e^{s_0}  \uuu \|_{C^{\alpha,\alpha/2}(Q_{s_0,\tau_0)}^1}  \leq C_{\tau_0}   \,\,  \| e^{ s_0}  \uuu \|_{L^\infty(Q_{s_0,\tau_0}^2)}. 
\ee
Combining \eqref{eqn-c2alpha5} with the  bounds  \eqref{eqn-hatu2}  and \eqref{eqn-hatu5} gives
$
\| e^{s_0}\uuu \|_{C^{2+\alpha}_{\text{cyl}}(Q_{s_0,\tau_0}^1)} \leq C_{\tau_0} < \infty.
$
Since the constant $C_{ \tau_1} $ is independent of $s_0$ (as long as $s_0 \geq 0$) we finally obtain the bound
\be\label{eqn-c2alpha12}
\| e^{s}  \uuu \|_{C^{2+\alpha}_{\text{cyl}}(\CC \times [\tau_0, \tau])} \leq C_{ \tau_0} < \infty
\ee
where $\CC$ denotes the half cylinder given by $\CC:= [0,+\infty) \times S^{n-1}.$
This estimate shows that $\uuu$ is uniformly small in $C^{2+\alpha}_{\text{cyl}}$ norm as $s \to +\infty$. Expressing the mean curvature $v:=\Fhn \, H$ in
cylindrical coordinates we readily deduce that \eqref{eqn-good2} holds, which also implies \eqref{eqn-good}.

\end{proof}

\section{$L^p$ bounds on  $1/H$}\label{sec-Lp}

We will assume in this section   that $M_t$ is a  solution of \eqref{eqn-IMCF} on $[0,\tau]$
as given by  Theorem  \ref{thm-ste} which satisfies condition \eqref{eqn-conet}
and that  $\tau < T-3\delta$,  for some $\delta >0$ and small, where $T=T(\alpha_0)$  denotes the extinction time of $\alpha(t)$ given in terms of $\alpha_0$ by  \eqref{eqn-T}.  Recall that $v:=\Fhn \, H $, where $\Fhn= -\Fom \, \omn$. 
Our goal is  to establish  a'priori bounds on   suitably weighted  $L^p$ 
norms of $v^{-1}(\cdot,t)$ on $M_t$, for any $p  \geq 1$,  that depend on $\delta$ but are independent of $\tau$.  
In the next section we will use these $L^p$ bounds and a Moser iteration argument to bound the $L^\infty$ norm of $v^{-1}$
on $M_t$. This $L^\infty$ bound constitutes the main a priori estimate on which  the proof of the long time existence of the flow is based upon.  We begin with the following straightforward  observation which will be frequently used in the sequel. 

\begin{lem} Assume that  $M_t$  is  an entire convex graph over $\R^n$  satisfying \eqref{eqn-conet} with $\al(t) \leq \al_0$.  Then, 
$\Fhn:=  - \Fom \, \omn$, satisfies 
\be\label{eqn-FFF}
 \Fhn  \geq \lambda (n, \alpha_0)  \, \sqrt{\gamma(t)}\, |F|. 
\ee
\end{lem}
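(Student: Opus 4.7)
The plan is to work entirely in the graphical parametrization and exploit two separate consequences of the sandwich estimate \eqref{eqn-conet}: the upper cone together with convexity will give a pointwise bound on $|D\uu|$ (hence on $-\omn$), while the lower cone will let me trade $|F|$ for $\Fom=\uu$. Combining these with the explicit formula $\sqrt{\gamma(t)}=\sqrt{n-1}\,\al(t)/\sqrt{1+\al(t)^2}$ derived in Section \ref{sec-cone} should produce the claim with an explicit $\lambda$ depending only on $n$ and $\al_0$.

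The first step is a uniform gradient bound $|D\uu(x,t)|\le \al(t)\le \al_0$. At any point $x_0\in\R^n$, convexity gives the supporting-plane estimate
\[
\uu\bigl(x_0+s\,D\uu(x_0,t)/|D\uu(x_0,t)|,\,t\bigr)\ \ge\ \uu(x_0,t)+s\,|D\uu(x_0,t)|,\qquad s>0,
\]
while the upper cone in \eqref{eqn-conet} applied to the left-hand side yields an upper bound $\al(t)(|x_0|+s)+\kappa$. Sending $s\to\infty$ forces $|D\uu(x_0,t)|\le \al(t)\le \al_0$. Using $\omn=-1/\sqrt{1+|D\uu|^2}$ this gives $-\omn\ge 1/\sqrt{1+\al_0^2}$.

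The second step is the bound $|F|\le \uu\,\sqrt{1+\al(t)^2}/\al(t)$. Indeed $\Fom=\uu\ge \al(t)|x|$ by the lower cone in \eqref{eqn-conet}, so
\[
|F|^2\,=\,|x|^2+\uu^2\,\le\,\Bigl(1+\tfrac{1}{\al(t)^2}\Bigr)\,\uu^2.
\]
Combining the two steps,
\[
\Fhn\,=\,\uu\,(-\omn)\,\ge\,\frac{\uu}{\sqrt{1+\al_0^2}}\,\ge\,\frac{\al(t)}{\sqrt{1+\al(t)^2}\,\sqrt{1+\al_0^2}}\,|F|\,=\,\frac{\sqrt{\gamma(t)}}{\sqrt{(n-1)(1+\al_0^2)}}\,|F|,
\]
which is the desired inequality with $\lambda(n,\al_0)=1/\sqrt{(n-1)(1+\al_0^2)}$.

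There is no real obstacle here; the only minor point is that the convexity argument of Step 1 a priori holds only at differentiability points of $\uu(\cdot,t)$, but under the hypotheses of Theorem \ref{thm-ste} the height function is smooth, so the bound holds everywhere. Note also that the assumption $\al(t)\le\al_0$ is consistent with the ODE \eqref{eqn-ode1}, which forces $\al$ to be strictly decreasing whenever it is positive, so this hypothesis is automatic once $\al(0)=\al_0$.
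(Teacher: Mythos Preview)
Your proof is correct and follows essentially the same approach as the paper: both use the lower cone in \eqref{eqn-conet} to bound $\Fom=\uu$ from below by $\frac{\al(t)}{\sqrt{1+\al(t)^2}}|F|$, and convexity together with the upper cone to bound $-\omn$ from below by $1/\sqrt{1+\al(t)^2}$ (you state only the weaker $1/\sqrt{1+\al_0^2}$, which suffices), arriving at the same constant $\lambda(n,\al_0)=1/\sqrt{(n-1)(1+\al_0^2)}$. The only cosmetic difference is that you spell out the supporting-plane argument for the gradient bound $|D\uu|\le\al(t)$, which the paper leaves implicit.
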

\begin{proof}
We begin by noticing that the lower bound in  condition  \eqref{eqn-conet} implies the bound 
$$\Fom  \geq \frac{\alpha(t)}{\sqrt{1+\alpha^2(t)}}\, |F|.$$
In addition, it follows from the convexity of $M_t$ and \eqref{eqn-conet} that 
${\displaystyle - \omn \geq \frac 1{\sqrt{1+\al(t)^2}}.}$
Thus,  
$$\Fhn:=- \Fom \, \omn  \geq \frac{\al(t)}{1+\al(t)^2} \, |F| \geq \la(\al_0,n) \, \sqrt{\gamma(t)}\, |F|.$$
The last inequality follows from the definition of $\gamma(t):= (n-1)\al(t)^2/(1+\al(t)^2)$ and 
$\al(t) \leq \al_0$. 
\end{proof}

We recall that $v^{-1}:= (\Fhn \, H)^{-1}$ satisfies  equation \eqref{eqn-vinverse1} and by   \eqref{eqn-good} 
$  \lim_{|F| \to \infty} v(z,t)= \gamma(t)$,   where $\gamma(t)$ satisfies the ODE \eqref{eqn-ode4} 
with initial condition $\gamma(0):=\gamma_0:=(n-1) \al_0^2/(1+\al_0^2)$.

Let $\gah(t)$ denote the solution of the ODE \eqref{eqn-ode4} with initial condition $\gah(0):=\gah_0:=(n-1) \alh_0^2/(1+\alh_0^2)$
for some number $\alh_0$ that satisfies $0 < \alh_0 < \al_0$.  Then, $\gah(t) < \gamma(t)$. 
Denote by $\hat T=\hat T(\hat \alpha_0)$ the vanishing time of $\gah$ that clearly satisfies $\hat T < T=T(\alpha_0)$. 
For a given number $\delta >0$ (small) we will choose  from now on $\alh_0$ such that the vanishing time $\hat T$ of  $\gah$ 
satisfies 
$$T-2\delta \leq \hat T \leq T-\delta.$$
For that choice of $\gah$ we will have  $\gah(t) < \gamma(t)$, for all $t < \hat T$. 
Hence, if we set 
\be\label{eqn-defw}
w(\cdot,t):=\gah(t)  \, v(\cdot,t)^{-1}=\gah(t)  \,(\Fhn \, H)^{-1}
\ee then by  \eqref{eqn-good} we have 
\be\label{eqn-limitw}
\lim_{|F(z,t)|\to \infty} w(z,t) =\gah(t)\, \gamma(t)^{-1} <1, \qquad t \in [0,\tau], \,\, \tau < \hat T . 
\ee

We will next compute the evolution of $w$ from the evolution of $v^{-1}$, shown in \eqref{eqn-vinverse1},  and 
the ODE for $\hat \gamma$,  shown in  \eqref{eqn-ode4}.
Indeed, if we multiply \eqref{eqn-vinverse1} by $\gah(t)$ and  use \eqref{eqn-ode4}, we obtain
%\be\label{eqn-eqw22}
%\frac{\pp}{\pp t} w - \D_i \big ( \frac 1{H^2} \D_i w \big ) \leq   -  \frac 2{H^2 w}  |\nabla w|^2 + 2\,  \omn^2 \gah^{-1} w^2 +
%(\frac 2{n-1}- 2\gah^{-1}) \, w
%\ee
%that can be also 
\be\label{eqn-eqw2}
\frac{\pp}{\pp t} w  - \D_i \big ( \frac 1{H^2} \D_i  w \big ) \leq  -  \frac 2{H^2 w}  |\nabla w|^2  + 2\,  \omn^2 \gah^{-1}  w^2 +
c_1  \gah^{-1}  \, w
\ee
with
$$c_1:= \frac{2\gah(t)}{n-1} - 2 <0, \qquad \mbox{since} \,\,\, \gah < \gamma(t):= \frac{(n-1) \, \alpha^2(t)}{1+\alpha^2(t)}  \leq (n-1).$$
Next, we set
$$\bw:= (w-1)_+.$$
Because of \eqref{eqn-limitw}, 
for  each given $t \in [0,\tau]$, $\tau <\hat T <T$,  the function $\bw(\cdot,t)$ satisfies
\be\label{eqn-zero}
\bw(\cdot,t)\equiv 0, \qquad \mbox{for} \,\,  |F| \geq R(t)
\ee for some $R(t) < \infty$. Notice that the main difficulty in our proof comes from the fact that we don't know 
that $R(t)$ is uniform in $t$.

\begin{lem} [Energy Inequality] \label{lem-energy} Under the assumptions of Theorem  \ref{thm-ste}, for any $p \geq 0$, $q:=(p+3)/2$ 
the function $\bw:= (w-1)_+$ with $w:=\gah \, (\Fhn \, H)^{-1}$ satisfies  
\be\label{eqn-eqw3} 
\begin{split}
\frac{d}{dt} \int_{M_t} & \bw^{p+1} \, d\mu + 2 \lambda^2 \gah^{-2} \gamma   \int_{M_t} |F|^2  |\D \bw^q|^2 \, d\mu \leq \\
&  \leq  
 C(p)\,  \gah^{-1} \left (  \int_{M_t} \bw^{p+2} \, d\mu  
+   \int_{M_t} \bw^{p+1} \, d\mu +   \int_{M_t} c_0 (z,t) \,  \bw^p \, d\mu \right ) 
\end{split} 
\ee
with $\lambda, C(p)$ positive constants that depend  on the  initial data (and $C(p)$ also on linearly $p$) and 
\be\label{eqn-co}
c_0(z,t):=  2\, \big (\omn^2  +\frac{\gah}{n-1} - 1 \big )_+.
\ee

\end{lem}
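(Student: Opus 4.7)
The plan is to prove \eqref{eqn-eqw3} by the standard energy-method recipe: multiply the differential inequality \eqref{eqn-eqw2} for $w$ by the test function $(p+1)\bar w^p$, integrate over the (non-compact) surface $M_t$, integrate by parts the diffusion, and group terms. The starting point is the identity
\begin{equation*}
\frac{d}{dt}\int_{M_t}\bar w^{p+1}\,d\mu = (p+1)\int_{M_t}\bar w^p\,w_t\,d\mu + \int_{M_t}\bar w^{p+1}\,d\mu,
\end{equation*}
where the second term comes from $\partial_t d\mu = d\mu$ in Lemma \ref{lem-HI1}(2), and we have used that $\partial_t\bar w = w_t$ a.e.\ on $\{w>1\}$ and vanishes elsewhere. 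Because of \eqref{eqn-zero}, $\bar w(\cdot,t)$ is compactly supported for each fixed $t$, which legitimizes integration by parts with no boundary term at infinity. For $p\geq 1$ the test function $\bar w^p$ is $C^1$ and one gets
\begin{equation*}
(p+1)\int \bar w^p\,\nabla_i\!\Big(\tfrac{1}{H^2}\nabla_i w\Big)\,d\mu
= -\,p(p+1)\int \tfrac{1}{H^2}\bar w^{p-1}|\nabla\bar w|^2\,d\mu,
\end{equation*}
using $\nabla w = \nabla\bar w$ on $\{w>1\}$; the case $p=0$ follows by standard approximation, with the boundary $\{w=1\}$ contributing nothing since $\bar w$ vanishes there.

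The central manipulation is on the gradient side. From the definition of $w$ one has the crucial identity $H^{-2} = w^{2}\Fhn^{2}/\gah^{2}$, so combining the IBP term with the friction term $-2(p+1)\int H^{-2}w^{-1}\bar w^p|\nabla\bar w|^2\,d\mu$ from \eqref{eqn-eqw2} produces
\begin{equation*}
\frac{1}{\gah^{2}}\int \Fhn^{2}\bigl[p(p+1)w^2\bar w^{p-1} + 2(p+1)w\,\bar w^{p}\bigr]|\nabla\bar w|^2\,d\mu.
\end{equation*}
Writing $w=1+\bar w$ and expanding, the bracket equals $p(p+1)\bar w^{p-1} + 2(p+1)^2\bar w^{p} + (p+1)(p+2)\bar w^{p+1}$, which is bounded below by the last summand. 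Invoking \eqref{eqn-FFF} to replace $\Fhn^{2}\geq \lambda^2\gamma|F|^2$ and using $|\nabla\bar w^q|^2 = q^2\bar w^{p+1}|\nabla\bar w|^2$ with $q=(p+3)/2$, the gradient contribution is at least
\begin{equation*}
\frac{4(p+1)(p+2)}{(p+3)^2}\cdot\frac{\lambda^2\gamma}{\gah^{2}}\int |F|^2|\nabla\bar w^q|^2\,d\mu.
\end{equation*}
The rational coefficient $4(p+1)(p+2)/(p+3)^2$ is bounded below by $8/9$ uniformly in $p\geq 0$, so after replacing $\lambda$ by a fixed fraction of itself (still depending only on $n$ and $\alpha_0$), we get the desired coefficient $2\lambda^2$ in front of the gradient term on the left-hand side.

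For the zero-order terms, multiplying $(p+1)\bar w^p$ by $2\omn^2\gah^{-1}w^2 + c_1\gah^{-1}w$ and expanding in $w=1+\bar w$ yields
\begin{equation*}
\gah^{-1}(p+1)\bigl[2\omn^2\bar w^{p+2} + (4\omn^2 + c_1)\bar w^{p+1} + (2\omn^2+c_1)\bar w^{p}\bigr].
\end{equation*}
Since $\omn^2\leq 1$ and $|c_1|\leq 2$, the first two terms are controlled by $\bar w^{p+2}$ and $\bar w^{p+1}$ with constants linear in $p$; the third term is exactly $c_0\bar w^p$ after passing to its positive part (the negative part has a favourable sign and is discarded onto the left). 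The extra $\int\bar w^{p+1}\,d\mu$ arising from $\partial_t d\mu$ is absorbed into $C(p)\gah^{-1}\int\bar w^{p+1}\,d\mu$ because $\gah\leq n-1$ on $[0,\tau]$. Assembling these bounds gives \eqref{eqn-eqw3}.

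The main obstacle is the combinatorial balancing in the gradient term: the very specific exponent $q=(p+3)/2$ is dictated by the need to convert both $p(p+1)w^2\bar w^{p-1}$ and $2(p+1)w\bar w^p$ into the single common weight $\bar w^{p+1}$, and the lower bound on $4(p+1)(p+2)/(p+3)^2$ away from zero is what allows a $p$-independent constant $\lambda$. The rest — the compact-support-based IBP, the bookkeeping of $w=1+\bar w$, and the use of \eqref{eqn-FFF} — is routine once this algebraic choice is fixed.
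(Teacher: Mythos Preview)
Your proposal is correct and follows essentially the same approach as the paper's proof: multiply \eqref{eqn-eqw2} by $(p+1)\bw^p$, integrate over $M_t$ (using the compact support from \eqref{eqn-zero}), convert the diffusion coefficient via $H^{-2}=\gah^{-2}w^2\Fhn^2$, combine the two gradient terms, apply \eqref{eqn-FFF}, and expand the zero-order terms in $w=1+\bw$. The only cosmetic difference is that you expand $p(p+1)w^2\bw^{p-1}+2(p+1)w\bw^p$ fully in powers of $\bw$ and retain the top term $(p+1)(p+2)\bw^{p+1}$, whereas the paper uses the cruder bound $w\geq\bw$ on $\{w>1\}$ to reach the same coefficient; both routes yield the factor $4(p+1)(p+2)/(p+3)^2\geq 8/9$, which is then absorbed into $\lambda$. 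For $p=0$ the paper handles the discontinuous test function by observing that the boundary term $\int_{\partial\{\bw>0\}}H^{-2}\partial_\nu\bar w\,d\sigma$ has the favourable sign, which is the concrete content behind your ``standard approximation'' remark.
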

\begin{proof}
If we first set $\bar w := w-1$, we see from \eqref{eqn-eqw2} that 
\bee\begin{split}
\frac{\pp}{\pp t} \bar w  - \D_i \big ( \frac 1{H^2} \D_i  \bar w \big ) & \leq  -  \frac 2{H^2 w}  |\nabla \bar  w|^2 +  2 \gah^{-1} \omn^2  (\bar w+1)^2
+ \gah^{-1} c_1 (\bar w +1)\\
&\leq -  \frac 2{H^2 w}  |\nabla \bar w|^2 + 2 \gah^{-1} \omn^2   \bar w^2 + \gah^{-1} (4\omn^2  + c_1) \bar w + \gah^{-1} (2\omn^2   + c_1 ).
\end{split}
\eee
We next  observe that since $\omn^2 \leq 1$ and $c_1  \leq 0$, we have
$4\omn^2  + c_1 \leq 4$, thus
\bee\begin{split}
\frac{\pp}{\pp t} \bar w  - \D_i \big ( \frac 1{H^2} \D_i  \bar w \big )  
&\leq   -  \frac 2{H^2 w}  |\nabla \bar w|^2+2 \gah^{-1} \bar w^2 +  4 \gah^{-1}   \bar w_+  + \gah^{-1} c_0(z,t)
\end{split}
\eee
with $c_0(z,t) := (2\omn^2  + c_1)_+$, $c_1=2 \gah/(n-1) - 2$, hence given by \eqref{eqn-co}. 
Let $\bw:=(w-1)_+=\bar w_+$. 
If we multiply the last inequality by $\bw^p=\bar w_+^p$, for some number $p \geq  0$,  and integrate by parts  (recalling that by \eqref{eqn-zero}
$\bw(\cdot,t)$ has compact support in $M_t$), we obtain
\begin{equation}\label{eqn-eqw222}
\begin{split}
\frac 1{p+1} \frac{d}{dt} \int_{M_t} \bw^{p+1} \, d\mu \leq  &- p  \, \int_{M_t} \frac 1{H^2} \bw^{p-1} |\D \bw|^2 \, d\mu 
 - 2 \int_{M_t} \frac 1{H^2 w} \bw^p  |\D \bw |^2 \, d\mu \\
&+ 2 \gah^{-1} \int_{M_t} \bw^{p+2} \, d\mu + (4 \gah^{-1}  +1) \int_{M_t} \bw^{p+1} \, d\mu + \gah^{-1}  
 \int_{M_t} c_0(z,t)  \, \bw^p \, d\mu. 
\end{split}
\end{equation}
Here we also used that $\ds {\partial} (d\mu) /{\partial t} = d\mu$.
Also, for  $p=0$ we use the inequality
$$ \int_{M_t}  \D_i \big ( \frac 1{H^2} \D_i  \bar w \big ) \chi_{\{ \bar w >0 \}} \, d\mu
= \int_{M_t \cap \partial \{ \bar w >0 \}}  \frac 1{H^2} \, \frac{\partial  \bar w}{\partial \nu}  \, d\sigma \leq 0.$$
We next remark that from the  definition of $w:= (\Fhn \, H)^{-1} \gah$, we may express 
${\displaystyle H^{-1} = \gah^{-1}  w \, \Fhn.}$
Also, $ {\displaystyle w\, \chi_{\{w > 1 \}}  \geq  (w-1)_+  = \bw.}$
Hence, we may combine  the two gradient terms on the right hand side of \eqref{eqn-eqw222} to conclude 
\bee
\begin{split}
\frac 1{p+1} \frac{d}{dt} \int_{M_t} \bw^{p+1} \, d\mu \leq &- (p+2) \,  \gah^{-2}   \int_{M_t} \Fhn^2 \bw^{p+1}  |\D \bw|^2 \, d\mu \\
&+ 2 \gah^{-1} \int_{M_t} \bw^{p+2} \, d\mu + (4\gah^{-1} +1)    \int_{M_t} \bw^{p+1} \, d\mu + \gah^{-1}
 \int_{M_t} c_0(z,t) \,  \bw^p \, d\mu.
 \end{split}
 \eee
Writing $$\bw^{p+1}  |\D \bw|^2 = \frac{4}{(p+3)^2} \,  |\D w^{\frac{p+3}2}|^2$$
and using \eqref{eqn-FFF} we obtain  
\bee
\begin{split}
\frac{d}{dt} \int_{M_t} \bw^{p+1} \, d\mu 
&\leq - 2 \, \lambda^2  \,  \gah^{-2} \gamma  \int_{M_t} |F|^2  |\D \bw^{\frac{p+3}2}|^2 \, d\mu  + c_2(p)\,   \gah^{-1} \int_{M_t} \bw^{p+2} \, d\mu 
\\ &+ c_1(p)\,   \gah^{-1}    \int_{M_t} \bw^{p+1} \, d\mu + \gah^{-1}
(p+1)  \int_{M_t} c_0 (z,t) \,  \bw^p \, d\mu
\end{split} 
\eee
for some new  positive constants $c_i(p)$ depending (linearly) on $p$ and  the initial data. This readily 
gives  \eqref{eqn-eqw3}  by setting $q:=(p+3)/2$. 

\end{proof}

We will next  prove the following variant   of Hardy's inequality adopted to our situation (see  in  \cite{Ca} and \cite{KO}
for  standard Hardy inequalities on complete non-compact manifolds).

\begin{prop}[Hardy Inequality]  Let $M_t$ be a solution of \eqref{eqn-IMCF} as 
in Theorem \ref{thm-ste}. Then, there exists a constant $C_n >0$ depending only on dimension $n$ 
such that  any function $g$ that is compactly supported on 
$M_t$, we have
\be\label{eqn-hardy}
\int_{M_t} g^2 \, d\mu \leq C(n)  \left (  \int_{M_t}  |\nabla g|^2 |F|^2 \, d\mu  + \int_{M_t}   g^2 |H| \, |F| \, d\mu \right ).
\ee
\end{prop}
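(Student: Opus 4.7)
The plan is to prove the Hardy inequality by the classical route: integrate by parts against the tangential projection $F^T$ of the position vector, using the fact that $\mathrm{div}_{M_t} F^T$ has a simple geometric form, and then absorb a cross term by Young's inequality.

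First I would establish the identity $\mathrm{div}_{M_t} F^T = n - H\langle F, \nu\rangle$ on $M_t$. This follows from the general decomposition for vector fields $X$ on $M_t$ with values in $\mathbb{R}^{n+1}$, namely $\mathrm{div}_{M_t} X = \mathrm{div}_{M_t} X^T + \langle X, \nu\rangle\,\mathrm{div}_{M_t}\nu$, together with $\mathrm{div}_{M_t} F = n$ (since $F$ is the position vector in Euclidean space, its Euclidean derivative is the identity) and $\mathrm{div}_{M_t}\nu = \sum_i h_{ii} = H$. Equivalently, this is a rearrangement of the standard formula $\Delta_{M_t}|F|^2 = 2n - 2H\langle F, \nu\rangle$ that already appears in the paper.

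Next, since $g$ is compactly supported on $M_t$, the integration by parts
\[
\int_{M_t} g^{2}\,\mathrm{div}_{M_t} F^{T}\,d\mu \;=\; -2\int_{M_t} g\,\langle \nabla g, F^{T}\rangle\,d\mu
\]
has no boundary terms. Substituting the identity above and using $|F^T|\le |F|$ yields
\[
n\int_{M_t} g^{2}\,d\mu \;=\; \int_{M_t} g^{2}\, H\,\langle F,\nu\rangle\,d\mu - 2\int_{M_t} g\,\langle \nabla g, F^{T}\rangle\,d\mu
\;\le\; \int_{M_t} g^{2}|H||F|\,d\mu + 2\int_{M_t}|g||\nabla g||F|\,d\mu .
\]

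Finally, I would apply Young's inequality $2|g||\nabla g||F| \le \tfrac{n}{2} g^{2} + \tfrac{2}{n} |\nabla g|^{2}|F|^{2}$ to the cross term and absorb $\tfrac{n}{2}\int g^2\,d\mu$ into the left-hand side. This gives
\[
\tfrac{n}{2}\int_{M_t} g^{2}\,d\mu \;\le\; \int_{M_t} g^{2}|H||F|\,d\mu + \tfrac{2}{n}\int_{M_t}|\nabla g|^{2}|F|^{2}\,d\mu ,
\]
which is the desired inequality with $C(n)$ depending only on $n$. There is essentially no obstacle: the only things to check are the divergence identity and the justification of integration by parts, both of which are routine. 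Note that the inequality holds on \emph{any} hypersurface with the mean curvature convention used in the paper and does not require convexity or the specific IMCF geometry, consistent with its role as a purely analytic tool in the Moser iteration to follow.
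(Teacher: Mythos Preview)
Your proof is correct and follows the same underlying idea as the paper: integrate by parts against the radial direction on $M_t$ and use that the resulting ``divergence'' term controls $\int g^2$ up to an $H|F|$ error. The implementations differ slightly. The paper sets $\rho=|F|$, computes $\Delta\rho \ge (n-1)/\rho - H$, writes $g=\rho^{-1/2}\psi$, and drops a nonnegative square from $|\nabla g|^2$ to reduce to $\int g^2\rho\,\Delta\rho$; this yields the constant $(n-1)/2$. You instead multiply $g^2$ directly against $\mathrm{div}_{M_t} F^T = n - H\langle F,\nu\rangle$ and absorb the cross term $2|g||\nabla g||F|$ by Young's inequality, obtaining the constant $n/2$. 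Your route is a bit more streamlined (no substitution, no Laplacian computation) and gives a marginally better constant, while the paper's version is phrased to parallel classical Hardy-inequality proofs on manifolds. Either way the inequality is purely analytic and requires nothing about IMCF or convexity, as you note.
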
 
\begin{proof} 
To simplify the notation, set  $\rho(F):= |F| $  and recall that from our assumptions on $M_t$ we have  $\rho >0$ everywhere. 
We begin by computing $\Delta \rho$. We have
$$\Delta \rho = \D_i \D_i ( \langle F,F \rangle^{1/2} ) = \D_i ( \langle F,F \rangle^{-1/2} \, \langle \eei,F \rangle )=
\frac n{|F|} - \frac{|F^T|^2}{|F|^3} + H\, \frac {\Fn}{|F|}
$$
from which we conclude the lower bound 
\be\label{eqn-LaF}
\Delta \rho  \geq \frac{n-1}{\rho} - H.
\ee
Let   $g:=  \rho^{\gamma} \psi $ for some $\gamma <0$ to be chosen momentarily. 
We then have
$$|\D g|^2 = |\D (\rho^\gamma \psi) |^2 = | \gamma \rho^{\gamma-1} \psi \D \rho + \rho^\gamma \D \psi|^2 
\geq  2\gamma \rho^{2\gamma-1} \psi \D \rho \cdot \D \psi.$$
We next observe that it is convenient to choose $\gamma=-1/2$ which gives
$$|\D g|^2 \geq - \psi \rho^{-2} \D \rho \cdot \D \psi$$
or equivalently (using that  $\psi^2 =   g^2 \rho$) 
$$|\D g|^2 \rho^2  \geq - \frac 12 \,  \D \rho \cdot \D \psi^2= - \frac 12 \,  \D \rho \cdot \D (g^2 \rho).$$
After integrating by parts we obtain
$$\int_{M_t}  |\D g|^2 \rho^2 \, d\mu \geq 
\frac 12  \int_{M_t} g^2 \rho \,  \Delta \rho \,  d\mu.$$
Combining this with inequality \eqref{eqn-LaF} yields
\be\label{eqn-hardy2}
\int_{M_t}  |\D g|^2 \rho^2 \, d\mu \geq 
\frac {n-1}2  \int_{M_t} g^2   d\mu - \frac 12  \int_{M_t}  g^2 \rho \, H  d\mu
\ee
from which \eqref{eqn-hardy} readily follows. 
\end{proof}

We will now combine \eqref{eqn-eqw3} with the above  Hardy inequality to prove the following $L^{p+1}$ bound on $\bw$
in terms of its initial data.

\begin{thm}[$L^{p+1}$ estimate on $\bw$]\label{thm-lpestw}
Assume that $M_t$ is a solution to  \eqref{eqn-IMCF} as in Theorem \ref{thm-ste} 
 defined for 
$t \in (0,\tau]$, and assume that  $ \tau < T-3\delta$ with $T$ given by \eqref{eqn-T} and  $\delta >0$. Then, for any $p \geq 0$ there exists 
a constant $C=C(p)$ depending on $p, T, \delta$ and also on the constants $\kappa, \al_0$  such that 
\be\label{eqn-lte}
\sup_{t \in [0,\tau]}  \int_{M_t} \bw^{p+1}(\cdot,t) \, d\mu \leq C( p,\delta,T) \left (1+ \int_{M_0} \bw^{p+1}   \, d\mu \right ).
\ee
\end{thm}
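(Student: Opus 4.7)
The plan is to combine the energy inequality of Lemma~\ref{lem-energy} with the Hardy inequality \eqref{eqn-hardy} and a Gr\"onwall argument on $[0,\tau]$. The main reduction is to absorb the super-linear term $\int_{M_t} \bw^{p+2}\, d\mu$ on the right-hand side of \eqref{eqn-eqw3} into the left-hand side gradient term plus a controlled multiple of $\int_{M_t} \bw^{p+1}\, d\mu$.

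The crucial geometric observation is that on $\{\bw > 0\}$ we have $w \geq 1$, which by the definition $w = \gah\,(\Fhn\, H)^{-1}$ and the lower bound \eqref{eqn-FFF} yields the uniform pointwise estimate
\begin{equation*}
H\, |F| \,\leq\, \frac{\gah(t)}{\lambda \sqrt{\gamma(t)}} \,\leq\, C(n, \alpha_0, \delta)
\end{equation*}
on $\mathrm{supp}(\bw) \cap M_t$ for $t \in [0,\tau]$; here $\gah(t)$ and $\gamma(t)$ are bounded above and below by constants depending only on $\alpha_0$ and $\delta$, since the constraint $\tau < T - 3\delta$ keeps us away from the vanishing times of $\gamma$ and $\gah$. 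This estimate tames the ``boundary'' term that arises in the Hardy inequality.

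Next I would apply Young's inequality $\bw^{p+2} \leq \epsilon\, \bw^{p+3} + C_\epsilon\, \bw^{p+1}$ followed by Hardy with test function $g = \bw^q$, $q = (p+3)/2$, giving
\begin{equation*}
\int_{M_t} \bw^{p+3}\, d\mu \,\leq\, C(n) \int_{M_t} |\nabla \bw^q|^2 |F|^2\, d\mu \,+\, C(n, \alpha_0, \delta) \int_{M_t} \bw^{p+3}\, d\mu
\end{equation*}
after using the pointwise bound on $H|F|$ above. For $\epsilon$ small enough (depending on $p, \delta, T$ and the initial data), the composed term involving $|\nabla \bw^q|^2 |F|^2$ is absorbed into the left-hand side of \eqref{eqn-eqw3}, while the residual $\bw^{p+3}$ piece is controlled by the same mechanism or iterated at a further Hardy step. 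The outcome is a differential inequality of the form
\begin{equation*}
\frac{d}{dt} \int_{M_t} \bw^{p+1}\, d\mu \,\leq\, C(p, \delta, T) \Big( \int_{M_t} \bw^{p+1}\, d\mu + \int_{M_t} c_0(z,t)\, \bw^p\, d\mu \Big).
\end{equation*}
The final $c_0$-term I would bound using $|c_0| \leq 2$ together with the asymptotics from Proposition~\ref{prop-asb}: since $\omn^2 \to 1 - \gamma(t)/(n-1) < 1 - \gah(t)/(n-1)$ at infinity, $c_0$ vanishes outside a region of $M_t$ whose area is uniformly bounded for $t \in [0,\tau]$, and then Young's inequality $\bw^p \leq \bw^{p+1} + 1$ reduces this to the same integral plus a time-uniform constant. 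Gr\"onwall on $[0,\tau]$ yields \eqref{eqn-lte}.

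The main obstacle is the absorption step: the energy inequality \eqref{eqn-eqw3} has prefactors $\gah^{-1}$ on its right-hand side and $\gah^{-2}\gamma$ on its gradient term, so the choice of $\epsilon$ must be calibrated so that the resulting constant $C(p, \delta, T)$ depends on $\tau$ only through the $\delta$-buffer $\hat T - \tau \geq \delta$. A secondary subtlety is quantifying uniformly in $t$ the area of $\{c_0 > 0\}$; this appears to follow from the uniform $C^{2+\alpha}$-bound in cylindrical coordinates established in the proof of Proposition~\ref{prop-asb}, which provides the required rate of decay of $\omn^2 - \beta(t)$ at spatial infinity.
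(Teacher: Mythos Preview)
Your overall architecture --- energy inequality, Hardy, Gr\"onwall --- matches the paper, but the absorption step has a genuine gap. When you apply Hardy to $g=\bw^q$ and bound the boundary term using only $H|F|\le C_1:=\gah/(\lambda\sqrt{\gamma})$ on $\{\bw>0\}$, you obtain
\[
\int_{M_t}\bw^{p+3}\,d\mu \;\le\; \frac{2}{n-1}\int_{M_t}|F|^2|\nabla\bw^q|^2\,d\mu \;+\; \frac{C_1}{\,n-1\,}\int_{M_t}\bw^{p+3}\,d\mu.
\]
This is only useful if $C_1<n-1$, and that fails in general: with $\lambda=\lambda(n,\alpha_0)$ from \eqref{eqn-FFF} one checks that $C_1/(n-1)$ is comparable to $\sqrt{1+\alpha_0^2}$, hence $\ge 1$ once $\alpha_0$ is not small. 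Your proposed remedy ``controlled by the same mechanism or iterated at a further Hardy step'' is circular: each iteration reproduces the same residual with coefficient $(C_1/(n-1))^k$, which diverges when $C_1\ge n-1$. No choice of $\epsilon$ in Young's inequality rescues this, because the residual coefficient is independent of $\epsilon$.

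The paper closes this by keeping the extra factor $w^{-1}$ in the pointwise bound rather than discarding it: on $\{\bw>0\}$ one has $|F|H\le \lambda^{-1}\gamma^{-1/2}\gah\,w^{-1}$, and since $\bw\le w$ there, $\bw^{2q}|F|H\le C\,\bw^{2q-1}$. Thus the Hardy boundary term drops a full power to $\int\bw^{p+2}$, and Hardy applied to the \emph{left-hand side} gradient term of \eqref{eqn-eqw3} manufactures a genuinely negative $-\,c\int\bw^{p+3}$ on the left. Now interpolation $\int\bw^{p+2}\le\epsilon\int\bw^{p+3}+C_\epsilon\int\bw^{p+1}$ absorbs cleanly, with no constraint on $\alpha_0$. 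So the order is reversed from yours: Hardy first (on the gradient term, to gain a good top-order term), then Young on $\bw^{p+2}$.

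Your treatment of the $c_0$ term is correct in spirit, but the paper does it more directly: convexity and the cone bounds \eqref{eqn-conet} alone force $-\omn\ge 1/\sqrt{1+\hat\alpha(t)^2}$ outside a ball of radius $R_\delta$ depending only on $\kappa,\alpha_0,\delta$ (Claim~\ref{claim-XXX}), so $c_0\equiv 0$ there; no appeal to the $C^{2+\alpha}$ asymptotics of Proposition~\ref{prop-asb} is needed, and the area bound on $\{c_0>0\}$ is immediate.
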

\begin{proof}
We  recall  that $\gah(t)$ is a   solution of the ODE \eqref{eqn-ode4} with initial value $0 < \hat \gamma(0) < \gamma(0)$ 
so that  that its vanishing time $\hat T$ satisfies $T-2\delta < \hat T < T-\delta$, for the given small number $\delta >0$. 
For any number $p >0$, set $q:=(p+3)/2$. Applying   \eqref{eqn-hardy2} for $g=\bw^q$  gives 
\be\label{eqn-eqw4}
 \int_{M_t}  |\nabla \bw^q|^2 |F|^2 \, d\mu \geq \frac {n-1}2  \int_{M_t} \bw^{2q}    d\mu - \frac 12  \int_{M_t}  \bw^{2q} \,|F| H  d\mu.
 \ee
We will next estimate  $|F| H \chi_{\{\bw >0\}}$ in terms of $\bw$. 
Recall that by definition $w(\cdot,t):=\gah(t)  \,(\Fhn \, H)^{-1}$ and that from \eqref{eqn-FFF} we have 
$|F| \leq \la^{-1} \, \gamma^{-1/2}   \Fhn$.
Thus, 
$$|F| \, H \leq \la^{-1}  \gamma^{-1/2} \,   \Fhn H = \la^{-1}\, \gamma^{-1/2} \, \gah \, w^{-1}.$$
Since $w \,  \chi_{\{w >1\}} \leq (w-1) \, \chi_{\{w >1\}} = (w-1)_+ = \bw$, we have 
$$|F| \, H \,  \chi_{\{w >1\}} \leq \la^{-1}\, \gamma^{-1/2} \, \gah \, \bw^{-1}.$$
Thus \eqref{eqn-eqw4} yields
\be\label{eqn-hardy-appl}
 \int_{M_t}  |\nabla \bw^q|^2 |F|^2 \, d\mu \geq \frac {n-1}2  \int_{M_t} \bw^{2q}    d\mu - \frac 12 \la^{-1}\, \gamma^{-1/2} \, \gah
  \int_{M_t}  \bw^{2q-1}   d\mu.
  \end{equation}
Recall that $q=(p+3)/2$, so that $2q-1=p+2$. 
If we now combine this   last estimate with \eqref{eqn-eqw3} and also use that $\gah^{-1}\gamma >1$ and $n-1\geq 1$,
 we obtain   the differential inequality
\bee
\begin{split}
\frac{d}{dt} \int_{M_t} \bw^{p+1} \, d\mu  \leq &-  \lambda^2 \, \gah^{-1}   \int_{M_t} \bw^{p+3}   d\mu\,  +\\
&+ C(p) \, \gah^{-1} \left (   \int_{M_t} \bw^{p+2} \, d\mu +  \int_{M_t}  \bw^{p+1}\, d\mu  +  \int_{M_t} c_0(z,t) \,  \bw^p  d\mu \right )
\end{split}
\eee
for  constant $C(p)$
depending  on $p$ and also on $\kappa,\al_0$ and with $c_0(z,t)$ given by \eqref{eqn-co}.
We may  apply the interpolation  inequality
$$\int_{M_t} |g|^{p+2} \, d\mu = \int_{M_t} |g|^{\frac{p+1}2}\, |g|^{\frac{p+3}2} \, d\mu \leq  
\frac{\la^2}{2C(p)} \int_{M_t} |g|^{p+3} \, d\mu  + \frac {C(p)}{2\la^2}  \, \int_{M_t} |g|^{p+1} \, d\mu $$
to $g:=\bw$ to conclude that 
\be\label{eqn-eqw5}
\frac{d}{dt} \int_{M_t} \bw^{p+1} \, d\mu \leq  
C(p) \, \gah^{-1} \left (   \int_{M_t} \bw^{p+1} \, d\mu +  \int_{M_t} c_0(z,t) \,  \bw^p  d\mu \right ) 
\ee
for a new  constant $C(p)$ that depends on $p$ and also on our initial data $\alpha_0,\kappa$ and dimension $n$.

Because $M_t$ is non-compact, in order to  estimate the last term in \eqref{eqn-eqw5} in terms of ${\displaystyle \int_{M_t} \bw^{p+1} \, d\mu}$ 
we will need to look more carefully  into the coefficient $c_0(z,t)$.  We claim the following.

\begin{claim}\label{claim-XXX} Assume that  $\gah(t)$ is chosen so that  its vanishing time $\hat T$ satisfies $T-2\delta < \hat T \leq T-\delta$
for the given  small number $\delta >0$. Then, there exists a number $R_\delta \geq 1$ (depending on $\delta$) such that 
\be\label{claim1} c_0(\cdot,t) \equiv   0 \qquad \mbox{on} \,\,M_t \cap \{ |F| \geq R_\delta \}, \,\,\,   0 \leq t < \hat T. \ee 
\end{claim}
\begin{proof}[Proof of claim \ref{claim-XXX}] Recall that $c_0(\cdot,t):= 2 \, \big (\omn^2  +\gah(t)/(n-1) - 1 \big )_+$ and that 
$\gah(t) < \gamma(t)$ for all $t < \hat T$. Since by definition $\gamma(t)=(n-1)\, \al(t)^2/(1+ \al(t)^2)$ 
we may also express 
$\gah(t)=(n-1)\, \hat \al(t)^2/(1+\hat \al(t)^2)$ for some function of time function $\hat \al(t)$.  
It follows from the condition $T-2\delta < \hat T \leq T-\delta$ that
$$  0 <  \mu_1(\delta) \leq  \al(t) - \hat \al (t)  \leq \mu_2(\delta), \qquad \forall t < \hat T$$
for some positive constants $\mu_1(\delta), \mu_2(\delta)$ that tend to zero as $\delta \to 0$. 
%Expressing our surface $M_t$ as a graph $x_{n+1}=u(x,t)$, we observe that 
%$$\omn^2  +\frac{\gah(t)}{n-1} - 1 
%=\frac 1{1+|Du|^2} - \frac{1}{1+ \hat \al(t)^2}=\frac{\hat \al(t)^2 - |Du|^2}{(1+|Du|^2) (1+ \hat \al(t)^2)}.
%$$
%Hence $ \{ x \in \R^n : c_0(x,t) > 0 \}  = \{ x \in \R^n:  |Du(x,t)|< \hat \al(t)  \}.$
Consider the cones defined by the graphs  $x_{n+1}= \al(t)\, |x|$ and  $x_{n+1} = \hat \al(t)\, |x| + \kappa$ over $x \in \R^n$. These  cones 
intersect at  $|x|= r(t):=\kappa/(\alpha(t)-\hat \alpha(t))$. Let $R(t):=\sqrt{1+\al^2(t)}\, r(t)$. 
It follows from \eqref{eqn-conet} and a simple geometric consideration 
that uses  the convexity of $M_t$ that 
$$ c_0(\cdot,t) \equiv 0 \qquad \mbox{on} \,\,M_t \cap \{ |F| \geq R(t) \}.$$ 
Since, 
$$R(t) :=  \kappa\, \frac{\sqrt{1+\al^2(t)}}{\alpha(t)-\hat \alpha(t)} \leq \kappa\, \frac{\sqrt{1+\al_0^2}}{\mu_1(\delta)}:=R_\delta,  \,\,\,   0 \leq t < \hat T$$
the claim  follows. 
\end{proof}

Using the above  claim and the bound $c_0(z,t) \leq 2$,  we  may  now  estimate  the term ${\displaystyle \int_{M_t} c_0(z,t) \,  \bw^p  d\mu}$ in \eqref{eqn-eqw5} as 
\bee
\begin{split}
\int_{M_t} c_0(z,t) \,  \bw^p  d\mu  &\leq  2 \int_{M_t \cap \{c_0 >0 \}} \bw^p\, d\mu
\leq C(R_\delta,p) \left (  \int_{M_t} \bw^{p+1}\, d\mu \right )^{p/(p+1)} \\
&\leq C(R_\delta,p) \left (  \int_{M_t} \bw^{p+1}\, d\mu + 1\right ).
\end{split}
\eee
Combining the last estimate with \eqref{eqn-eqw5}, we obtain
\be\label{eqn-eqw7}
\frac{d}{dt} \int_{M_t} \bw^{p+1} \, d\mu \leq  C(p, \delta) \, \gah(t)^{-1} \left (  1+ \int_{M_t} \bw^{p+1} \, d\mu \right ).
\ee
Since we have assumed that $\gah(t)$ vanishes at $\hat T$ and $T-2\delta < \hat T < T-\delta$,   it follows that $\gah(t)^{-1} \leq C(\delta)$  
for all $t < T-3\delta$. We conclude from \eqref{eqn-eqw7} that
\bee
\frac{d}{dt} \int_{M_t} \bw^{p+1} \, d\mu \leq  
C(p, \delta) \,  \left (  1+ \int_{M_t} \bw^{p+1} \, d\mu \right )
\eee
for another constant $C(p,\delta)$. After integrating this inequality in time $t$ we conclude that if $\tau < T-3\delta$, \eqref{eqn-lte} holds. 
%\be
%\int_{M_t} \bw^{p+1} \, d\mu \leq  C(p,\delta,T) \left (1+  \int_{M_0} \bw^{p+1}(\cdot,0) \, d\mu \right ), \qquad \forall t \in [0,\tau], \tau < T-3\delta.
%\ee 
\end{proof}

%The estimate in Proposition \ref{prop-lpestw} readily yields the following $L^{p+1}$ bound on $w:=\gah^{-1} (H\, \Fhn)^{-1}$
%in terms of its initial data,  which is the main result of this section. 
%
%\begin{thm}[$L^{p+1}$ estimate of $\bw$]\label{thm-lpestw}
%Assume that $M_t$ is a solution to  \eqref{eqn-IMCF} as in Theorem \ref{thm-ste} 
% defined for 
%$t \in (0,\tau]$, and assume that  $ \tau < T-3\delta$ with $T$ given by \eqref{eqn-T} and  $\delta >0$. Then, for any $p \geq 0$ there exists 
%a constant $C=C(p))$ depending on $p, T, \delta$ and also on the constants $\kappa, \al_0$  such that 
%\be\label{eqn-lpbound}
%\sup_{t \in [0,\tau]}  \int_{M_t} w^{p+1}(\cdot,t) \, d\mu \leq C(p) \left (1+ \int_{M_0} \bw^{p+1}(\cdot,0)  \, d\mu \right ).
%\ee

\section{$L^\infty$ estimates  on  $1/H$}\label{sec-Linfty-below}

We will assume  throughout this  section  that $M_t$ is an entire graph convex  solution of   \eqref{eqn-IMCF} on $[0,\tau]$
as in   Theorem \ref{thm-ste} and that $\tau < T-3\delta$,  for some $\delta >0$, where $T$ is the number given by \eqref{eqn-T}. 
We  will establish  a local $L^\infty$  bound  on $(\Fhn \, H)^{-1}$  which holds on $M_t$  for all $t \in [0,\tau]$ and depends 
only on the initial data, on  $T$ and on $\delta$. 
This bound  constitutes the main  step in the proof of the long time  existence result Theorem \ref{thm-main}. 
It states as follows.

\begin{thm}[$L^\infty$  bound on $w$ in terms of its spatial averages] \label{thm-Hbelow0}
Assume that $M_t$ is a solution to  \eqref{eqn-IMCF} as in Theorem \ref{thm-ste} 
 defined for 
$t \in (0,\tau]$, and assume that  $ \tau < T-3\delta$ with $T$ given by \eqref{eqn-T} and  $\delta >0$.
There exist absolute constants $\mu >0$ and $\sigma >0$  
 and a constant $C$ that depends  on  $\alpha_0, \kappa$,  on $\delta$,
and the initial  bound $\sup_{M_0} \Fom \, H$,    
for which  $w:= \gah(t)  \,(\Fhn \, H)^{-1}$ satisfies the bound 
\be\label{eqn-linfty}
\sup_{t \in (t_0, \tau]} \|w\|_{L^\infty(M_t)} \leq C \, {t_0}^{-\mu} \left (1 + \sup_{t \in ({t_0}/4, \tau]} \sup_{R \geq 1} 
R^{-n} \int_{M_t \cap \{ |F| \leq R \}} w(\cdot,t)\, d\mu \right )^{\sigma}.
\ee
for any $t_0 \in (0,\tau/2]$. 
\end{thm}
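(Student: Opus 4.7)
\medskip

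\noindent\textbf{Proof plan for Theorem \ref{thm-Hbelow0}.} The strategy is a Moser iteration on the truncated quantity $\bw := (w-1)_+$, using the energy inequality of Lemma \ref{lem-energy}, the Hardy inequality \eqref{eqn-hardy}, and a Michael--Simon type Sobolev inequality on $M_t$. Once $\bw$ is controlled in $L^\infty$, the bound on $w$ follows since $w \le 1 + \bw$ pointwise and $\int_{B_R} \bw \le \int_{B_R} w$, so the spatial average on the right-hand side of \eqref{eqn-linfty} dominates the one for $\bw$.

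\medskip

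\noindent\emph{Step 1: Localized Caccioppoli inequality.} I would reprise the derivation of Lemma \ref{lem-energy}, but inserting a space-time cutoff $\psi(z,t) = \eta(t)\,\phi(|F(z,t)|)$, where $\phi$ is supported in $\{|F|\le 2R\}$ with $\phi\equiv 1$ on $\{|F|\le R\}$ and $|\nabla \phi| \le C R^{-1}$, and $\eta(t)$ is a time cutoff vanishing at $t = t_0/4$ and $\equiv 1$ for $t \geq t_0/2$. Multiplying \eqref{eqn-vinverse1} by $\psi^2 \bw^p$, integrating on $M_t$, and using $\partial_t\, d\mu = d\mu$ together with the estimate $|F|^2 |\nabla \phi|^2 \le C$, I obtain for $q = (p+3)/2$
\begin{equation*}
\sup_{t \in [t_0/2,\tau]} \int_{M_t} \psi^2 \bw^{p+1} \, d\mu  +  \iint \psi^2 |F|^2 |\nabla \bw^{q}|^2\, d\mu\, dt  \le  \frac{C(p,\delta)}{t_0} \iint_{M_t\cap B_{2R} \times [t_0/4,\tau]} \bw^{p+1}\, d\mu\, dt  +  \text{l.o.t.}
\end{equation*}
The localization of $c_0(z,t)$ to $\{|F|\le R_\delta\}$ provided by Claim \ref{claim-XXX} handles the last term in \eqref{eqn-eqw3} for $R\ge R_\delta$.

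\medskip

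\noindent\emph{Step 2: Reverse H\"older step.} To turn this into an iteration I would apply the Michael--Simon Sobolev inequality on $M_t$,
$\bigl(\int |f|^{\frac{n}{n-1}}\bigr)^{\frac{n-1}{n}} \le C\int(|\nabla f| + H|f|)\, d\mu$,
to $f = \psi^2 \bw^{q}$. Combining the resulting $L^{\frac{n}{n-1}}$ bound with the uniform bound $H \cdot \langle F, \omega\rangle \le C$ of Proposition \ref{prop-Habove} (which controls the weighted $L^1$-norm of $Hf$ by a weighted $L^1$-norm of $\bw^{q}/\langle F,\omega\rangle$ and hence, via \eqref{eqn-FFF}, by gradient terms produced by Step 1), and then interpolating in time by H\"older, I obtain a parabolic reverse H\"older inequality of the form
\begin{equation*}
\Bigl(\iint_{Q_{R/2}} \bw^{\kappa(p+1)}\, d\mu\, dt\Bigr)^{1/\kappa} \le \frac{C(p,\delta)}{t_0^{\,a}}\iint_{Q_R} \bw^{p+1}\, d\mu\, dt
\end{equation*}
with gain factor $\kappa = 1 + \tfrac{2}{n}$, where $Q_R = (M_t \cap \{|F|\le R\}) \times [t_0/2,\tau]$ and similarly $Q_{R/2}$ shrinks both in $R$ and in $t_0$.

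\medskip

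\noindent\emph{Step 3: Moser iteration and conclusion.} Iterating with $p_k$ defined by $p_k+1 = \kappa^k$, and correspondingly shrinking the radii $R_k = (1+2^{-k})R/2$ and time cutoffs $t_0^{(k)}$, the geometric series in the exponents converges and the products of the constants $C(p_k,\delta)^{1/(p_k+1)}$ remain bounded. This yields
\begin{equation*}
\sup_{M_t\cap B_{R/2},\, t \in [t_0,\tau]} \bw \le C\, t_0^{-\mu} \Bigl(R^{-n-1}\iint_{Q_R} \bw\, d\mu\, dt\Bigr)^{\sigma}
\end{equation*}
for absolute exponents $\mu, \sigma > 0$. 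The $R^{-n}$ normalization in \eqref{eqn-linfty} then follows by noticing that $|M_t \cap \{|F| \le R\}| \le C R^n$ (from convexity and the conical bound \eqref{eqn-conet}), absorbing the time integral, taking $R \to \infty$ (and using that the $L^\infty$ bound is already localized to $B_{R_\delta}$ by Claim \ref{claim-XXX} anyway), and finally replacing $\bw$ by $w$ on both sides.

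\medskip

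\noindent\emph{Main obstacle.} The delicate point is that the natural Dirichlet form for \eqref{eqn-vinverse1} has the weight $1/H^2 \sim |F|^2$, which diverges at spatial infinity; one must check that the Michael--Simon inequality combined with the Hardy inequality \eqref{eqn-hardy} produces a genuine gain in integrability ($\kappa>1$) despite this weight, and that the Moser iteration closes with \emph{universal} exponents $\mu, \sigma$ independent of the scale $R$. The second subtlety is tracking how the constants depend on $\alpha_0, \kappa, \delta$ through the ellipticity of \eqref{eqn-vinverse1} (controlled by \eqref{eqn-steH} and \eqref{eqn-Hgood2}) so as to confirm that they accumulate into a single multiplicative constant $C = C(\alpha_0, \kappa, \delta, \sup_{M_0} H\langle F,\omega\rangle)$ rather than something that worsens as $\tau \to T - 3\delta$.
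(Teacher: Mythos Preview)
Your overall strategy --- localized energy estimate, Michael--Simon Sobolev, Moser iteration --- matches the paper's, but there is a genuine gap in Step~2 that you yourself flag in the ``Main obstacle'' paragraph without resolving. The energy inequality delivers control of the \emph{weighted} Dirichlet form $\int |F|^2 |\nabla \bw^q|^2$, whereas Michael--Simon requires the unweighted $\int |\nabla f|^2$. On a ball $B_R$ the weight $|F|^2$ ranges from $0$ to $R^2$, so you cannot pull it out, and the combination of Hardy \eqref{eqn-hardy} with Michael--Simon that you gesture at does not produce a gain in integrability: Hardy only converts the weighted gradient into an $L^2$ bound, not an $L^{2n/(n-2)}$ bound.

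The paper's resolution is to run the Moser iteration not on balls but on \emph{annuli} $Q^r_{\rho_0,t_0}=\{\,\rho_0(1-r)<|F|<\rho_0(1+r)\,\}\times((1-r)t_0,t_0]$, where $|F|^2\sim\rho_0^2$ is essentially constant and can be factored out; this yields Proposition~\ref{prop-Hbelow}, an $L^\infty$ bound on each annulus in terms of the $\rho_0^{-n}$--normalized $L^1$ norm on a slightly larger annulus, with constants independent of $\rho_0$. The region near the origin, where $|F|^2$ degenerates, is handled by a separate (and simpler) argument, Proposition~\ref{prop-Hbelow2}, since there the equation is uniformly parabolic with bounded coefficients. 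Theorem~\ref{thm-Hbelow0} then follows by combining the two. A minor further point: the paper iterates with $\brw:=\max(w,1)$ rather than $\bw=(w-1)_+$, which is convenient because $\brw\ge 1$ permits absorbing lower-order powers. Finally, your remark in Step~3 that the $L^\infty$ bound is ``already localized to $B_{R_\delta}$ by Claim~\ref{claim-XXX}'' is a misreading: that claim localizes the coefficient $c_0$, not the support of $\bw$, which is compact at each $t>0$ but not uniformly in $t$.
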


For the proof of this theorem we will use  a parabolic variant of Moser's  iteration on the differential inequality \eqref{eqn-eqw2} that is satisfied by
$w:= \gah(t)  \,(\Fhn \, H)^{-1}$. Such technique was first introduced in the nonlinear parabolic context by Dahlberg  and Kenig
in \cite{DK}.  In fact we will closely follow the proof in \cite{DK} (see also in the proof of Lemma  1.2.6 in \cite{DKb}). For the inverse mean curvature flow in the  compact setting,  a similar bound was shown in \cite{HI} via a variant of the 
Stampacchia iteration method.

The estimate \eqref{eqn-linfty} will be shown in two steps  Propositions  \ref{prop-Hbelow} and \ref{prop-Hbelow2} below.
Let us begin by introducing some notation. For any given number $t_0 \in (0,\tau]$ we set
$$S_{t_0}:= \{ (P ,t) \in \R^{n+1} \times (0,t_0]: \,\,  P  \in   M_t, \,\, t \in  (0, t_0] \, \} = \cup_{t \in (0,t_0]} M_t \times \{t\}.$$
Also, for any  given numbers  $\rho_0 > 1$, $t_0 \in (0,\tau]$ and $r \in (0,1)$
we consider the cylinders  in  $\R^{n+1} \times (0,+\infty)$ given by
$$Q_{\rho_0,t_0}^r:= \{ (\bx,t) \in \R^{n+1} \times (0,+\infty):\,\,  \rho_0 (1-r) <   |\bx| < \rho_0 (1+r), \,\, (1- r)  \, t_0 < t \leq t_0 \}.$$ 
In particular, we set 
$$Q_{\rho_0,t_0 }:=Q_{\rho_0,t_0}^{1/4}, \qquad Q_{\rho_0,t_0 }^{*}:=Q_{\rho_0,t_0}^{1/2}, \qquad Q_{\rho_0,t_0 }^{**}
:=Q_{\rho_0,t_0}^{3/4}.$$
Notice that since in equation \eqref{eqn-u0} one cannot scale the time $t$, it is not necessary to use the  standard parabolic scaling 
in the above cylinders,  one can just  use the same scale  in  $\bx$ and $t$.

\begin{prop}\label{prop-Hbelow} Assume that $M_t$ is a solution to  \eqref{eqn-IMCF} as in Theorem \ref{thm-ste} 
 defined for 
$t \in (0,\tau]$, and assume that  $ \tau < T-3\delta$ with $T$ given by \eqref{eqn-T} and  $\delta >0$.
There exist absolute constants $\mu >0$ and $\sigma >0$ and a constant $C$ that depends  on  $\alpha_0, \kappa$,  on $\delta$,
and the initial  bound $\sup_{M_0} \Fom \, H$,    
for which  $w:= \gah(t)  \,(\Fhn \, H)^{-1}$ satisfies the bound
\be\label{eqn-Hbelow}
\|w\|_{L^\infty( Q_{\rho_0,t_0} \cap S_{t_0})} \leq C \, {t_0}^{-\mu} 
\left (1 + \sup_{t \in (t_0/4, {t_0}]} \rho_0^{-n} \int_{M_t \cap  Q^{**}_{\rho_0,t_0}} w(\cdot,t)\, d\mu \right )^{\sigma}
\ee
which holds for any $\rho_0 >2$ such that $Q_{\rho_0,t_0} \cap S_{t_0}$ is not empty. 
\end{prop}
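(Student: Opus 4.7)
The plan is to establish the $L^\infty$ bound \eqref{eqn-Hbelow} by running the parabolic Moser iteration of Dahlberg--Kenig directly on the differential inequality \eqref{eqn-eqw2} satisfied by $w:=\hat\gamma(t)\,(\Fhn H)^{-1}$. The key preliminary observation is that on the annular cylinder $Q^{**}_{\rho_0,t_0}$, the conical enclosure \eqref{eqn-conet}, the upper bound $H\,\Fom\le C$ from Proposition \ref{prop-Habove}, and the lower bound \eqref{eqn-FFF} together force $H$, $\Fhn$, and $|F|$ to each be of order $\rho_0$, up to constants depending only on $\alpha_0$, $\kappa$ and $\delta$. Consequently the diffusion coefficient $H^{-2}$ in \eqref{eqn-eqw2} is comparable to $\rho_0^2$ throughout the support of the cutoffs we will use, which after rescaling makes the equation uniformly parabolic in the Moser sense.

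Concretely, I would pick a sequence of smooth space-time cutoffs $\eta_k$ supported in the nested cylinders $Q^{r_k}_{\rho_0,t_0}$ with $r_k\searrow 1/4$, and test \eqref{eqn-eqw2} against $w^{p_k-1}\eta_k^2$ for an increasing sequence $p_k\to\infty$ with $p_0>3$. Integration by parts against $\D_i(H^{-2}\D_i w)$ produces the positive quadratic $(p_k-1)\int H^{-2}w^{p_k-2}|\D w|^2\eta_k^2\,d\mu$ on the left, which combines with the favorable $-2H^{-2}w^{p_k-2}|\D w|^2$ coming from the square-gradient term on the right of \eqref{eqn-eqw2} to yield a net positive coefficient $(p_k-3)$. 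Using $\partial_t d\mu=d\mu$, Cauchy--Schwarz on the cross term $\int H^{-2}w^{p_k-1}\eta_k\D w\cdot\D\eta_k\,d\mu$, and the equivalence $H^{-2}\sim\rho_0^2$, one arrives at a Caccioppoli-type inequality of the schematic form
\begin{equation*}
\sup_{s\in(0,t]}\!\int_{M_s}\!w^{p_k}\eta_k^2\,d\mu+\int_0^t\!\int_{M_s}\! H^{-2}|\D(w^{p_k/2}\eta_k)|^2\,d\mu\,ds\le C(p_k)\!\int_0^t\!\int_{M_s}\!w^{p_k}\Phi_k\,d\mu\,ds+C(p_k,\delta)\!\int_0^t\!\int_{M_s}\!w^{p_k+1}\eta_k^2\,d\mu\,ds,
\end{equation*}
where $\Phi_k:=\rho_0^2|\D\eta_k|^2+\eta_k|\partial_t\eta_k|+\eta_k^2$, and where the crucial finiteness $\hat\gamma^{-1}\le C(\delta)$ on $[0,\tau]$ uses precisely the hypothesis $\tau<T-3\delta$ together with the choice of $\hat\gamma$ made in Section \ref{sec-Lp}.

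To close the iteration I would apply the Michael--Simon Sobolev inequality on $M_t$ to $w^{p_k/2}\eta_k$, absorbing its $\|H\,w^{p_k/2}\eta_k\|_{L^1}$ remainder via the uniform bound $H\le C/\rho_0$ on the annular region; this yields the standard $L^{p_k}\to L^{p_k n/(n-2)}$ gain in space, which integrates in time to the usual Moser step. Iterating with $p_k=p_0\bigl(n/(n-2)\bigr)^k$ and summing the resulting geometric progression of constants produces
\begin{equation*}
\|w\|_{L^\infty(Q_{\rho_0,t_0}\cap S_{t_0})}\le C\,t_0^{-\mu}\,\|w\|_{L^{p_0}(Q^*_{\rho_0,t_0}\cap S_{t_0})}^{\sigma},
\end{equation*}
where the loss $t_0^{-\mu}$ tracks the natural scaling of the shrinking cutoff family. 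A final Dahlberg--Kenig self-improvement argument (cf.\ Lemma 1.2.6 of \cite{DKb}) then converts the $L^{p_0}$ norm on the right into the normalized $L^1$ average of \eqref{eqn-Hbelow}, possibly at the cost of enlarging $\sigma$.

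The main obstacle in the whole scheme is precisely the super-linear term $2\hat\gamma^{-1}\omn^2\,w^2$ on the right of \eqref{eqn-eqw2}: in contrast to the linear parabolic setting, after testing against $w^{p_k-1}\eta_k^2$ it contributes $w^{p_k+1}\eta_k^2$, one power above the level the Sobolev gain can reabsorb unconditionally. Its reabsorption will require a bootstrap, either by first running the iteration on truncations $(w-M)_+$ for large $M$ and sending $M\to\infty$, or by interpolating $w^{p_k+1}=w\cdot w^{p_k}$ against an $L^\infty$ bound on a slightly larger cylinder in the spirit of Dahlberg--Kenig; this is what simultaneously forces the exponent $\sigma>1$ in \eqref{eqn-Hbelow} and the dependence of $C$ on $\delta$ through $\hat\gamma^{-1}\le C(\delta)$.
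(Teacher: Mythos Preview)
Your plan has the right overall architecture (Caccioppoli $\to$ Michael--Simon Sobolev $\to$ Moser iteration, then a Dahlberg--Kenig self-improvement from $L^{p_0}$ down to $L^1$), and this is indeed how the paper proceeds. But there is a circular step that prevents your Caccioppoli inequality from closing as written.

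You assert that on the annular cylinder $Q^{**}_{\rho_0,t_0}$ the diffusion coefficient satisfies $H^{-2}\sim\rho_0^2$. Only one direction of this is available a priori: Proposition~\ref{prop-Habove} and \eqref{eqn-conet} give $H\le C\rho_0^{-1}$, hence $H^{-2}\ge c\rho_0^2$. The reverse inequality $H^{-2}\le C\rho_0^2$ is, via $\Fhn\sim\rho_0$ and the definition $w=\hat\gamma\,(\Fhn H)^{-1}$, equivalent to $w\le C$ --- exactly the $L^\infty$ bound you are trying to prove. Without it, the cross term $\int H^{-2}w^{p}|\nabla\eta|^2\,d\mu$ on the right of your energy inequality cannot be estimated by $C\rho_0^2\int w^{p}|\nabla\eta|^2\,d\mu$; the same circularity infects the $\eta\,|\partial_t\eta|$ term, since $\partial_t|F|=H^{-1}|F|^{-1}\Fn$ introduces another $H^{-1}$. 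So the problem is not only the super-linear source term you identify as the ``main obstacle''; the basic Caccioppoli step itself does not close with $H^{-2}$ treated as a constant.

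The paper's fix is to exploit the exact dependence of $H^{-2}$ on $w$ rather than freeze it. From the definition of $w$ one has the \emph{identity} $H^{-2}=\hat\gamma^{-2}\,\Fhn^2\,w^2$. Working with $\brw:=\max(w,1)$ (so that $w=\brw$ on the support of $\nabla\brw$), the favourable gradient term becomes
\[
\int H^{-2}\,\brw^{\,p-2}\,|\nabla\brw|^2\,\eta^2\,d\mu
\;=\;\hat\gamma^{-2}\int\Fhn^2\,\brw^{\,p}\,|\nabla\brw|^2\,\eta^2\,d\mu
\;\sim\;\rho_0^2\int\bigl|\nabla\brw^{\theta}\bigr|^2\,\eta^2\,d\mu,
\qquad \theta:=\tfrac{p+2}{2},
\]
so two extra powers of $\brw$ appear for free. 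The same substitution in the cross term and in the $\eta_t$ term shows that \emph{all} right-hand side contributions land at the level $\brw^{2\theta}=\brw^{\,p+2}$; in particular the super-linear source contributes $\brw^{\,p+1}\le\brw^{\,p+2}$ since $\brw\ge 1$. This is the content of Lemma~\ref{lem-energy20}, after which the Moser step and the final $L^{p_0}\to L^1$ reduction go exactly as you describe. (A minor arithmetic point: the two gradient contributions have the same sign, so the net coefficient is $p+1$, not $p-3$; the term $-2H^{-2}w^{-1}|\nabla w|^2$ on the right of \eqref{eqn-eqw2} is favourable and \emph{adds} to the $(p-1)$ coming from integration by parts.)
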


\begin{rem} For the remaining of this section  we will call {\em  uniform constants} the   constants that may depend on the number  $\delta >0$,
the constants $\alpha_0, \kappa$,  but that are {\em independent} of $\rho_0$ and ${t_0}$. 
\end{rem}

Since  $w$ satisfies the differential inequality  \eqref{eqn-eqw2},  if we set $\brw:=\max (w,1)$
it follows that $\brw$ satisfies the same differential inequality and
since $\brw \leq   \brw^2$  we have 
\be\label{eqn-nice3}
\frac{\pp}{\pp t} \brw  - \D_i \big ( \frac 1{H^2} \D_i  \brw \big ) \leq  -  \frac 2{H^2 \brw}  |\nabla \brw|^2  + c_2\,  \gah^{-1}  \brw^2. 
\ee
for some new constant $c_2>0$. 

\begin{rem} In the following we shall not distinguish between the image $F(z, t)$  of a point $ z \in M$  and its  coordinate vector  in $\R^{n+1}$. 
\end{rem}

For the  given numbers  $\rho_0 >1$  and $t_0 \in (0,\tau]$ and any numbers $1/4 <   r < \br < 1/2$, we consider a
radial  cutoff function $\psi= \psi (\rho,t), \rho=|\bx|$, $\bx \in \R^{n+1}$  with   $\psi  \in C^\infty_c (Q_{\rho_0,t_0}^{\br})$ satisfying 
\be\label{eqn-psi1}
\psi \equiv 1 \quad \mbox{on}\,\, Q_{\rho_0,t_0}^{r}, \quad 0 \leq \psi \leq 1, \quad \rho_0 \, | \psi_\rho | + t_0 \, | \psi_t| \leq C \, (\br - r)^{-1}. 
\ee
We extend $\psi$ to be equal to zero outside $ Q_{\rho_0,t_0}^{\br}$ and define the function $\eta$ on $S_{t_0}$ by 
\be\label{eqn-eta1}
\eta  (F,t):= \psi (|F|,t), \qquad F \in M_t.
\ee

\begin{lem}\label{lem-energy20}   Under the assumptions of Theorem  \ref{prop-Hbelow}, 
for any $p \geq 1$ and $\theta:=(p+2)/2$, we have 
 \be\label{eqn-energy20} 
\bs
\sup_{t \in (0,{t_0}]}  \int_{M_t}  (\eta^2   \brw^p) (\cdot,t)  \, d\mu  +  & \int_0^{t_0} \int_{M_t}   \rho_0^2 \,  |\nabla (\eta  \brw^\theta)  |^2   d\mu \, dt  \\ &\leq  
C \, t^{-1}_0 \, (\br-r)^{-2} \,  \int_0^{t_0} \int_{M_t \cap \{ \eta >0 \}}  \brw^{2\theta}  d\mu\,   dt 
\end{split} \ee
where $\eta \in C_c (S_{t_0})$ is the cutoff function defined by \eqref{eqn-eta1}
\end{lem}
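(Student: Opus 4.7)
The plan is to multiply the differential inequality \eqref{eqn-nice3} by $p\,\eta^2\brw^{p-1}$, integrate over $M_t$, and integrate by parts in the principal term. The key structural fact that makes the weight $\rho_0^2$ appear on the left of \eqref{eqn-energy20} is the identity
$$\frac{1}{H}=\frac{w\,\Fhn}{\gah}\leq\frac{\brw\,\Fhn}{\gah}\quad\text{on }M_t,$$
with equality on the set $\{w>1\}$, which contains $\{\nabla\brw\neq 0\}$. Combined with the lower bound $\Fhn\geq\lambda\sqrt{\gamma}\,|F|$ from \eqref{eqn-FFF} and $|F|\geq\rho_0/2$ on $\mathrm{supp}\,\eta\subset Q^{\br}_{\rho_0,t_0}$, this identity converts $1/H^2$ into the coercive weight $\Fhn^2\brw^2/\gah^2\geq c(\delta)\rho_0^2\brw^2$ on the support of $\nabla\brw$, and bounds $1/H^2\leq C(\delta)\rho_0^2\brw^2$ globally on $\mathrm{supp}\,\eta$ for the error terms.

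Using $\partial_t(d\mu)=d\mu$ from Lemma \ref{lem-HI1} and the integration by parts gives
$$\frac{d}{dt}\!\int_{M_t}\!\eta^2\brw^p\,d\mu+p(p+1)\!\int_{M_t}\!\frac{\eta^2\brw^{p-2}}{H^2}|\nabla\brw|^2\,d\mu\leq X+Y+Z+W,$$
with $X=2p\!\int\!\eta\brw^{p-1}H^{-2}\nabla\eta\cdot\nabla\brw\,d\mu$, $Y=pc_2\gah^{-1}\!\int\!\eta^2\brw^{p+1}\,d\mu$, $Z=2\!\int\!\eta\partial_t\eta\,\brw^p\,d\mu$, and $W=\!\int\!\eta^2\brw^p\,d\mu$. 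Young's inequality with weight $(p+1)/2$ absorbs half of the principal term into the left-hand side and leaves $|X|$ bounded by $2\!\int\!\brw^p H^{-2}|\nabla\eta|^2\,d\mu$; using the upper bound on $1/H^2$ together with $|\nabla\eta|\leq C\rho_0^{-1}(\br-r)^{-1}$, this residual is controlled by $C(\delta)(\br-r)^{-2}\!\int_{\{\eta>0\}}\!\brw^{2\theta}\,d\mu$ since $\brw^{p+2}=\brw^{2\theta}$. For $Y$, the choice of $\gah$ forces $\gah^{-1}\leq C(\delta)$ on $[0,\tau]$ (because the vanishing time satisfies $\hat T>T-2\delta>\tau+\delta$), so $Y\leq C(p,\delta)\!\int_{\{\eta>0\}}\!\brw^{2\theta}\,d\mu$ using $\brw\geq 1$. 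For $Z$, writing $\partial_t\eta=\psi_t+\psi_\rho\,\Fn/(H|F|)$ and noting that $|\Fn|$ is bounded on the compact cylinder while $H|F|\geq\Fhn H\geq c(\delta)>0$ by \eqref{eqn-steH} yields $|\partial_t\eta|\leq Ct_0^{-1}(\br-r)^{-1}$, hence $|Z|\leq Ct_0^{-1}(\br-r)^{-1}\!\int_{\{\eta>0\}}\!\brw^{2\theta}\,d\mu$, and trivially $W\leq\!\int_{\{\eta>0\}}\!\brw^{2\theta}\,d\mu$. Unifying these prefactors through $\br-r\leq 1$ and $t_0\leq T$, and invoking the coercivity on the remaining good term yields
$$\frac{d}{dt}\!\int_{M_t}\!\eta^2\brw^p\,d\mu+c(\delta,p)\rho_0^2\!\int_{M_t}\!\eta^2\brw^p|\nabla\brw|^2\,d\mu\leq C\,t_0^{-1}(\br-r)^{-2}\!\!\int_{M_t\cap\{\eta>0\}}\!\!\brw^{2\theta}\,d\mu.$$

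Since $\psi$ is compactly supported in $\{t>(1-\br)t_0\}$, one has $\eta(\cdot,0)\equiv 0$, so integrating the last inequality over $[0,t]$ for any $t\in(0,t_0]$ and then taking the supremum on the left produces simultaneously the desired sup-bound and the spacetime bound on $\rho_0^2\!\int_0^{t_0}\!\!\int\!\eta^2\brw^p|\nabla\brw|^2$. The expansion $|\nabla(\eta\brw^\theta)|^2\leq 2\theta^2\eta^2\brw^p|\nabla\brw|^2+2\brw^{2\theta}|\nabla\eta|^2$ with $\theta=(p+2)/2$, combined with $\rho_0^2|\nabla\eta|^2\leq C(\br-r)^{-2}$, shows that the second piece has the same form as the right-hand side; discarding it yields \eqref{eqn-energy20}. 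The main obstacle is not conceptual but one of bookkeeping: keeping track of the two uses of the relation between $1/H$ and $\brw$ (as equality on $\{\nabla\brw\neq 0\}$ and as an upper bound globally), and verifying that the speed of $M_t$ through the spacetime cutoff, namely $\partial_t|F|=\Fn/(H|F|)$, does not spoil the expected $t_0^{-1}(\br-r)^{-1}$ bound on $|\partial_t\eta|$.
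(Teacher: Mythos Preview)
Your overall strategy---multiply \eqref{eqn-nice3} by $p\eta^2\brw^{p-1}$, integrate by parts, convert $H^{-2}$ via the identity $H^{-1}=\gah^{-1}\Fhn w$, and use $\Fhn\sim\rho_0$ on $\mathrm{supp}\,\eta$---is exactly the paper's, and your handling of $X$, $Y$, $W$ and the final expansion of $|\nabla(\eta\brw^\theta)|^2$ is correct.

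The gap is in your treatment of $Z$. You assert a pointwise bound $|\partial_t\eta|\leq C\,t_0^{-1}(\br-r)^{-1}$ by invoking $H|F|\geq H\Fhn\geq c(\delta)$ ``by \eqref{eqn-steH}.'' But \eqref{eqn-steH} is established only on the short-time interval $[0,c_0/4]$ of Theorem~\ref{thm-ste}; on the full interval $[0,\tau]$ with $\tau<T-3\delta$ the available lower bound is $c_\tau$ from \eqref{eqn-steHt}, which depends on $\tau$. Using it would make the constant $C$ in \eqref{eqn-energy20} depend on $\tau$, whereas Proposition~\ref{prop-Hbelow} requires $C$ to depend only on $\alpha_0,\kappa,\delta$ and $\sup_{M_0}\Fom H$. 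A $\tau$-independent lower bound on $H\Fhn$ is precisely what Sections~\ref{sec-Lp}--\ref{sec-Linfty-below} are designed to produce, so assuming it here is circular.

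The paper's fix is not to bound $|\partial_t\eta|$ pointwise but to keep the $H^{-1}$ factor inside the integral: from $\partial_t\eta=\psi_t+\psi_\rho\,\Fn/(H|F|)$ one has
$$\int_{M_t}\brw^p\eta\,|\partial_t\eta|\,d\mu\leq C(\br-r)^{-1}\Big(t_0^{-1}\!\int\brw^p\eta\,d\mu+\rho_0^{-2}\!\int\brw^p\eta\,H^{-1}|\Fn|\,d\mu\Big),$$
and then one substitutes $H^{-1}\leq\gah^{-1}\Fhn\brw$ together with the trivial $\Fhn|\Fn|\leq|F|^2\leq C\rho_0^2$. This produces an extra factor of $\brw$, turning $\brw^p$ into $\brw^{p+1}\leq\brw^{2\theta}$, and the resulting constant depends only on $\gah^{-1}\leq C(\delta)$.
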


\begin{proof}
We begin by observing that  the cutoff function defined by \eqref{eqn-eta1} satisfies 
\be\label{eqn-beta}
|\nabla \eta | \leq C \, \rho^{-1}_0 \, (\br-r)^{-1} \qquad \mbox{and} \qquad |\partial_t \eta| \leq  C \,  (\br-r)^{-1} \left  ( \,  \rho^{-2}_0 \,  H^{-1}  |\Fn | + t_0^{-1}  \right   ) 
\ee
where we have denoted by $\nabla \eta$ the  gradient of $\eta $ on $M$. 
The first inequality follows from \eqref{eqn-psi1} and  the calculation
$$ |\nabla_i \eta  |  = \frac 12  \, |\psi_\rho| \,|F|^{-1} \,  | \nabla_i \langle F, F \rangle | 
\leq C \, \rho^{-1}_0 \,  (\br - r)^{-1} $$
while the second inequality follows from  \eqref{eqn-psi1} and the calculation
$$ | \partial_t \eta  | =  |\psi_\rho | \, |F|^{-1}\,  | \langle F, F_t \rangle | + |\psi_t| 
\leq C\,  (\br-r)^{-1}  \left ( \rho^{-2}_0  \, H^{-1}  |\Fn | + t_0^{-1} \right ).$$
Using  equation \eqref{eqn-nice3} and  that $\partial ( d \mu )/ \partial t = d\mu$, $\brw \geq 1$  and $H^{-1}= \gah^{-1} \Fhn \, w$, we have
\bee
\bs
 \frac d{dt}  \int  \brw^p \eta^2 \, d\mu &= p  \int \brw^{p-1}\,  \eta^2  \, \brw_t  \,  d\mu  + 2 \int \brw^p \eta \, \eta_t   \, d\mu
+    \int \brw^p \, \eta^2 \, d\mu\\
&\leq - p (p-1)   \int  \frac 1{H^2}\,  \brw^{p-2}  \eta^2 \,   |\nabla \brw|^2    \, d\mu -  2 p \,  \int  \frac 1{H^2} \brw^{p-2} \eta^2 |\nabla \brw|^2   \, d\mu\\ 
& -   2 p  \int \frac 1{H^2}  \, \brw^{p-1}  \eta  \nabla_i \brw \nabla_i \eta  \, d\mu  + 2 \int \brw^p  \eta \, |\eta_t |   \, d\mu +  c_2 \, p  \, \gah^{-1} 
 \int \brw^{p+1} \, \eta^2 \, d\mu +  \int \brw^p \, \eta^2 \, d\mu\\
&\leq - p (p+1) \gah^{-2}    \int \Fhn^2 \, \brw^p \eta^2 \,   |\nabla \brw|^2    \, d\mu -   2 p\,  \gah^{-2}  \int \Fhn^2 \, \brw^{p+1}  \eta  \nabla_i \brw \nabla_i \eta  \, d\mu\\
&+ 2  \int \brw^p \eta \, |\eta_t|   \, d\mu +  \bar c_2 \,    \int \brw^{p+1}\, \eta^2 \, d\mu. 
 \end{split}
\eee
with $\bar c_2 := c_2 \, p \,  \gah^{-1} +1$. 
Let $\theta =(p+2)/2$. Writing
$$\brw^p \,   |\nabla \brw|^2 = \theta^{-2} \,|\nabla \brw^\theta|^2 \qquad \mbox{and} \qquad  \brw^{p+1} \nabla \brw = \theta^{-1} \brw^\theta
\, \nabla \brw^\theta$$
%and using the bound $\Fhn^2  \geq \la \gah |F|^2$
we obtain 
\bee
\bs
 \frac d{dt}  \int \brw^p \eta^2 \, d\mu &\leq - p (p+1)  \theta^{-2} \gah^{-2} \int  \Fhn^2   \eta^2   |\nabla \brw^\theta |^2    \, d\mu 
 \\ &- 2 p\,  \theta^{-1}  
 \gah^{-2} \int \Fhn^2  \, \eta\,  \brw^\theta |\nabla \eta| \, |\nabla \brw^\theta| \, d\mu 
 + 2  \int \brw^p \eta \, |\eta_t|   \, d\mu +   \bar c_2 \,   \int \brw^{p+1}\, \eta^2 \, d\mu. 
 \end{split}
 \eee
We estimate
$$\int   \Fhn^2 \brw^\theta \eta |\nabla \eta | \, | \nabla \brw^\theta |  \, d\mu \leq \frac {(p+1)}{4\theta} \int  
\Fhn^2 \eta^2   |\nabla  \brw^\theta |^2    \, d\mu
+ \frac \theta{(p+1)}  \, \int \brw^{2\theta} \Fhn^2 |\nabla \eta|^2 \, d\mu$$
to conclude 
\be\label{eqn-v2} 
\bs
 \frac d{dt}  \int \brw^p \eta^2 \, d\mu &+ \frac  12 \,  p\,(p+1) \theta^{-2}  \gah^{-2}    \int  \Fhn^2 \eta^2 \,   |\nabla  \brw^\theta |^2    \, d\mu \\
& \leq  C  \, \left (  \gah^{-2} \frac p{p+1}  \int  \Fhn^2 \brw^{2\theta} |\nabla \eta|^2 \, d\mu+  \gah^{-1} \int \brw^{p+1}  \eta^2   \, d\mu 
+ \int \brw^p \eta \, |\eta_t|    \, d\mu \right ) 
 \end{split}
 \ee
for a uniform constant  $C$ that is in particular independent of $p$.  
Also,  by \eqref{eqn-beta} we have 
$$\int \brw^p \eta \, |\eta_t|    \, d\mu \leq  C\,  (\br - r)^{-1}  \left ( t_0^{-1} \, \int \brw^p  \eta   \, d\mu +  \rho^{-2}_0  \, 
\int \brw^p \eta \,  \, H^{-1}  |\Fn | \, d\mu \right ).$$
Using that 
$$H^{-1} = \gah^{-1} \, \Fhn \, w \leq \gah^{-1} \, \Fhn \, \brw$$
and
$$\Fhn \, |\Fn| \leq C\, |F|^2 \leq C\, \rho_0^2$$
on the support of $\eta$  and also that $\brw \geq 1$,  we obtain the bound 
\be\label{eqn-eta12}
\bs
\int \brw^p \eta \, |\eta_t|    \, d\mu &\leq  C\, (\br-r)^{-1} \left ( t_0^{-1} \int \brw^p \, \eta    \, d\mu +  
\int \gah^{-1}  \, \brw^{p+1}  \, \eta  \, d\mu \right )\\ &\leq C\, t^{-1}_0  (\br-r)^{-1}  \int  \gah^{-1}  \, \brw^{p+1}  \, \eta  \, d\mu.
\end{split}
\ee
Since $p \geq 1$, we have 
$$\frac 49  \leq  p  (p+1)  \theta^{-2} = \frac{4 p  (p+1) }{(p+2)^2}  \leq 4.$$
Integrating \eqref{eqn-v2} in time 
on $(0,t]$ for all $t \in (0,{t_0}]$ and using \eqref{eqn-beta} and  \eqref{eqn-eta12}
yields
\bee
\bs
\sup_{t \in (0,{t_0}]}  & \int_{M_t } \brw^p \, \eta^2\, d\mu  \, dt  +  \gah^{-2}  \int_0^{t_0} \int_{M_t}
  \Fhn^2 \eta^2   |\nabla  \brw^\theta |^2    \, d\mu \, dt \\
& \leq  C    (\br-r)^{-2}  \left (  \rho_0^{-2}   
\int_0^{t_0} \int_{M_t \cap \{ \eta >0 \}}    \gah^{-2}  \Fhn^2 \brw^{2\theta} d\mu \, dt  +   t_0^{-1} 
\int_0^{t_0} \int_{M_t \cap \{ \eta >0 \}}     \gah^{-1}  \brw^{p+1}   d\mu \, dt \right ).  
\end{split} 
\eee 
Using  the bounds  $\bar w \geq 1$, \eqref{eqn-FFF} and  $   c\, \rho_0^2 \leq \Fhn^2 \leq  |F|^2 \leq C \rho_0^2$ 
and $\gah^{-1} \leq C_\delta$, for ${t_0} < T-3\delta$, we obtain
\bee
\bs
\sup_{t \in (0,{t_0}]}  \int_{M_t}  (\eta^2   \brw^p) (\cdot,t)  \, d\mu  + 
&\int_0^{t_0} \int_{M_t}  \rho_0^2 \eta^2 \,   |\nabla  \brw^\theta |^2   d\mu \, dt  \\
&\leq  
C \, t_0^{-1} \, (\br-r)^{-2} \int_0^{t_0} \int_{M_t \cap \{ \eta >0 \}}   \brw^{2\theta}  d\mu\,   dt. 
\end{split} \eee
Finally, using the estimate 
\bee
\bs
\int_0^{t_0} \int_{M_t}   \rho_0^2 \, \eta^2  |\nabla  \brw^\theta |^2   d\mu \, dt & \geq 
\frac 12 \int_0^{t_0} \int_{M_t}   \rho_0^2 \, |\nabla (\eta  \brw^\theta)   |^2   d\mu \, dt  - 
 4 \int_0^{t_0} \int_{M_t}  \rho_0^2 \, |\nabla \eta |^2   \brw^{2\theta}    d\mu \, dt \\
 & \geq \frac 12  \int_0^{t_0} \int_{M_t}    \rho_0^2 \, |\nabla (\eta  \brw^\theta)  |^2   d\mu \, dt  - C\, (\br - r)^{-2} 
\int_0^{t_0} \int_{M_t \cap \{ \eta >0 \}}     \brw^{2\theta}   d\mu \, dt
 \end{split}
 \eee
we conclude \eqref{eqn-energy20}. 

\end{proof}

We will prove next  a  variant of  the  following  Sobolev inequality which holds on any complete manifold $N^n$, with  $n \geq 3$
\be\label{eqn-sob0}
\left ( \int_{N^n} |f|^{\frac {2n}{n-2}}  \, d\mu \right )^{\frac {n-2}n}
\leq C(n)  \, \int_{N^n} |\nabla f|^2 + H^2  f^2 \, d\mu
\ee
and for any  $f \in C^1_c(N^n)$. When $n=2$ we will use instead the inequality
\be\label{eqn-sob00}
\left ( \int_{N^2} |f|^4 \, d\mu \right )^{1/2}
\leq C \, |N^2 \cap  \mbox{supp} f  |^{1/2}  \, \int_{N^2} |\nabla f|^2 + H^2  f^2 \, d\mu
\ee
which holds for any $f \in C^1_c(N^2)$.

\begin{lem}\label{lem-sob}
We  set $q^*:= q/(q-1)$ with $q=n/2$ if $n \geq 3$ and $q=2$ if $n=2$. Then, for any $k \in (0,q^*)$
and $h \in C_c^{1,0}(S_{t_0})$  we have
\be\label{eqn-sob1} 
\begin{split}
\int_0^{t_0}  \int_{M_t} h^{2k} \, d\mu\, dt   \leq \,  &C \left \{   \int_0^{t_0} |M_t \cap \mbox{\em supp} \, h|^\lambda  \int_{M_t}  |\nabla h|^2  +  
H^2 h^2 \, d\mu\, dt
\right .\\
& \left . \cdot \sup_{t \in (0,{t_0}]} \left ( \int_{M_t}   h^{2(k-1)q}(\cdot,t) \, d\mu \right )^{1/q} \right \}.
\end{split}
\ee
with $\lambda=0$ if $n \geq 3$ and $\lambda=1/2$ if  $n=2$. 
\end{lem}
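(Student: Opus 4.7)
The plan is to combine the ambient Sobolev inequality \eqref{eqn-sob0}--\eqref{eqn-sob00} applied slicewise on $M_t$ with a H\"older interpolation, and then integrate in $t$.

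Fix $t \in (0,t_0]$. First I would apply the Sobolev inequality to $h(\cdot,t) \in C^1_c(M_t)$. For $n \geq 3$, \eqref{eqn-sob0} gives directly
$$\left(\int_{M_t} h^{2q^*}\, d\mu\right)^{1/q^*} \leq C(n)\, \int_{M_t} \bigl(|\nabla h|^2 + H^2 h^2\bigr)\, d\mu,$$
and for $n = 2$, \eqref{eqn-sob00} yields the same bound with an additional factor $|M_t \cap \mathrm{supp}\, h|^{1/2}$ on the right, which accounts for the exponent $\lambda = 1/2$ in the planar case.

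Next, since $k \in (0, q^*)$, I would split $h^{2k} = h^{2}\cdot h^{2(k-1)}$ and interpolate via H\"older with the conjugate exponents $q^*$ and $q^*/(q^*-1) = q$:
$$\int_{M_t} h^{2k}\, d\mu \leq \left(\int_{M_t} h^{2q^*}\, d\mu\right)^{1/q^*} \left(\int_{M_t} h^{2(k-1)q}\, d\mu\right)^{1/q}.$$
Substituting the slicewise Sobolev bound into the first factor gives, for each $t$,
$$\int_{M_t} h^{2k}\, d\mu \leq C\, |M_t \cap \mathrm{supp}\, h|^\lambda \left(\int_{M_t} (|\nabla h|^2 + H^2 h^2)\, d\mu\right) \left(\int_{M_t} h^{2(k-1)q}\, d\mu\right)^{1/q}.$$

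Finally I would integrate this pointwise-in-$t$ inequality over $(0, t_0]$ and bound the spatial factor $\bigl(\int h^{2(k-1)q}\bigr)^{1/q}$ by its supremum over $t \in (0, t_0]$, which may then be pulled out of the time integral; what remains inside is exactly $|M_t \cap \mathrm{supp}\, h|^\lambda \int_{M_t}(|\nabla h|^2 + H^2 h^2)\, d\mu$, giving \eqref{eqn-sob1}. There is no genuine analytic obstacle here; the proof is essentially an interpolation argument. The only subtlety worth flagging is that when $k < 1$ the exponent $2(k-1)q$ is negative and $h^{2(k-1)q}$ must be interpreted on the set $\{h > 0\}$; the inequality is then vacuous unless $h$ is bounded away from zero on its support, which is the regime in which the lemma is actually invoked in the ensuing Moser iteration.
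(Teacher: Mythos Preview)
Your proposal is correct and follows essentially the same approach as the paper: apply the Sobolev inequality \eqref{eqn-sob0}--\eqref{eqn-sob00} slicewise to $h(\cdot,t)$, split $h^{2k}=h^2\cdot h^{2(k-1)}$ and use H\"older with the conjugate pair $(q^*,q)$, then take the supremum in $t$ of the second factor and integrate in $t$. Your remark on the case $k<1$ is a reasonable addition; in the paper's applications the lemma is only invoked with $k>1$, so this issue does not arise there.
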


\begin{proof}
Since $h(\cdot,t) \in C_c^{1}(M_t)$, it  follows from \eqref{eqn-sob0} that for $n \geq 3$ and any $ t \in (0,t_0]$ we have 
$$\left ( \int_{M_t } |h|^{q^*}  \, d\mu \right )^{2/{q^*}} \leq C(n) \,
 \, \int_{M_t} |\nabla h|^2  + H^2  h^2 \, d\mu.$$
Hence, for any  $t \in (0,{t_0}]$ we have 
\bee
\bs
\int_{M_t } & h^{2k} \, d\mu  =  \int_{M_t } h^2\, h^{2(k-1) } \, d\mu \\
&\leq \left ( \int_{M_t } |h|^{2q^*}  \, d\mu \right )^{1/{q^*}} \, \left (    \int_{M_t} |h|^{2(k-1)q}  \, d\mu \right )^{1/q} \\
& \leq C \left (  \int_{M_t}    |\nabla h|^2  + 
H^2   h^2 \, d\mu
\right ) \,  \sup_{t \in (-0,{t_0}]} \left (  \int_{M_t }   h^{2(k-1)q}(\cdot,t) \, d\mu \right )^{1/q}.
\end{split}
\eee
Inequality \eqref{eqn-sob1} with $\lambda =0$  now follows by  integrating in $t$. When $n=2$ one uses the same
calculation as above with the only difference that  now $q^*=2$ and by \eqref{eqn-sob00} we have 
$$\left (\int_{M_t } |h|^{2q^*}  \, d\mu \right )^{1/{q^*}} \leq  C \, |M_t  \cap  \mbox{supp} \, h  |^{1/2}  \, \int_{M_t} |\nabla f|^2 + H^2  f^2 \, d\mu$$
leading to \eqref{eqn-sob1} with $\lambda =1/2$.

\end{proof}

We will  next combine \eqref{eqn-energy20} and \eqref{eqn-sob1} to conclude the proof of  Proposition \ref{prop-Hbelow} via a 
Moser iteration argument. 

\begin{proof}[Proof of Proposition  \ref{prop-Hbelow}]  
For the  given numbers  $\rho_0 >1$  and $t_0 \in (0,\tau]$ and any numbers $1/4 <   r < \br < 1/2$,
we let $\eta \in C_c (S_{t_0} \cap Q^{**}_{\rho_0,r_0})$ be the cutoff function given by \eqref{eqn-psi1}-\eqref{eqn-eta1}. 
Clearly,  
$$   \frac{\rho_0}2 \leq   |F|  \leq 2\, \rho_0,   \qquad \mbox{on}  \,\,\,   S_{t_0} \cap Q^*_{\rho_0,r_0}$$
which includes the support of $\eta$. 

We first  apply \eqref{eqn-sob1} to $h:=\eta \, \brw^\theta \in C^{1,0}(S_{t_0})$  to obtain
\be\label{eqn-sob4} 
\begin{split}
\rho_0^{-n} \int_0^{t_0}  \int_{M_t}  (\eta \brw^\theta)^{2 k} \, d\mu  \leq  &C \left \{  \rho_0^{-n} \int_0^{t_0}  \int_{M_t}
 \rho_0^2  \, |\nabla (\eta \brw^\theta)|^2  +  
\rho_0^2  \, H^2 (\eta   \brw^\theta)^2  \, d\mu\, dt
\right .\\
& \left . \cdot \sup_{t \in (0,{t_0}]} \left ( \rho_0^{-n} \, \int_{M_t}   (\eta \brw^\theta)^{2(k-1)q}(\cdot,t) \, d\mu \right )^{1/q} \right \}.
\end{split}
\ee
Notice that we have multiplied by $\rho^{-n}_0$ to make the inequality scaling invariant in space.  For $n \geq 3$ inequality \eqref{eqn-sob4}
simply follows from \eqref{eqn-sob1}, since  $\lambda=0$ and  $q=n/2$. When $n=2$ we apply  \eqref{eqn-sob1} with 
$\lambda=1/2$ and $q=2$ and use   the fact that $|M_t \cap \mbox{supp} \, \eta| \leq C\, \rho^2_0$.

From Proposition \ref{prop-Habove}  we have 
\be\label{eqn-boundHH}
\rho_0^2\,  H^2 \leq C\,  |F|^2 H^2 \leq C
\ee
since $|F| \geq\rho_0/2$  on the support of $\eta$. 
We next choose $k=k(p) >1$ such that 
$$2\theta (k-1)q = p.$$ Since $\theta = (p+2)/2$ this means that
$(p+2)  (k-1)q = p$, hence $(k-1)q = p/(p+2) <1$ or  $k < (q+1)/q < q/(q-1):=q^*$. 
In addition $k > 1+ p/(p+2)q \geq 1+1/(3q)$, since $p \geq 1$. Summarizing, for future reference we have
\be
1 + \frac 1{3q} < k=  k(p) < q^{*}
\ee
with $q, q^*$ as in Lemma \ref{lem-sob}. Thus, from \eqref{eqn-sob4} and \eqref{eqn-boundHH} we obtain
\be\label{eqn-sob2} 
\begin{split}
\rho_0^{-n} \int_0^{t_0}  \int_{M_t}  \eta^2  \brw^{2\theta k}  \, d\mu  \leq  &C \left \{  \rho_0^{-n} \int_0^{t_0}  \int_{M_t}
 \rho_0^2  \, |\nabla (\eta \brw^\theta)|^2  +  
 \eta^2    \brw^{2\theta}   \, d\mu\, dt
\right .\\
& \left . \cdot \sup_{t \in (0,{t_0}]} \left ( \rho_0^{-n} \, \int_{M_t}   \eta^{2(k-1)q} \brw^p(\cdot,t) \, d\mu \right )^{1/q} \right \}.
\end{split}
\ee
To simplify the notation, set
$$S^r_{\rho_0,t_0} := Q^r_{\rho_0,t_0} \cap S_{t_0} \qquad \mbox{and} \qquad  S_{\rho_0,t_0}^\br := Q^\br_{\rho_0,t_0} \cap S_{t_0}$$
and recall that from its definition $\eta \equiv 1$ on $Q^r_{\rho_0,t_0}$ and $\eta \equiv 0$ outside $ Q^{\br}_{\rho_0,t_0}$.
Also,  set
$$B:=  (\br-r)^{-2} \, t_0^{-1}$$
Combining  \eqref{eqn-energy20} and \eqref{eqn-sob2} yields
\be\label{eqn-com1}
\rho_0^{-n} \iint_{S^r_{\rho_0,t_0}}  \brw^{2\theta k} \, d\mu dt \leq C \, B^{1+1/q} \, t_0^{-(1+1/q)}  \left (  \rho_0^{-n} \iint_{S^\br_{\rho_0,t_0}}    
 \brw^{2\theta}   \, d\mu  dt \right )^{1+1/q}.
\ee 
 
\smallskip

We will now iterate this inequality to obtain the desired $L^\infty$ bound on $\brw$. 
To this end, we   define $p_0, p_1, \cdots$ and $\theta_0, \theta_1, \cdots$  by letting  $p_0=1$
and setting 
\be\label{eqn-iter0}
\theta_\nu = \frac{p_\nu+2}2, \qquad \theta_{\nu+1} = k_\nu \, \theta_\nu,  \qquad k_\nu=k_\nu(p_\nu)  = 1+ \frac{p_\nu}{(p_\nu+2)\, q}.
\ee
We also define
$$\rho_\nu := \frac{(1+\nu)}{2(1+2\nu)} \qquad  
\mbox{and} \qquad  Q_\nu:=Q^{r_\nu}_{\rho_0,t_0}, \quad S_\nu:=S^r_{\rho_0,t_0}=Q^{\rho_\nu}_{\rho_0,t_0}\cap S_{t_0}.$$
Observe that under this notation $Q_0=Q^*_{\rho_0,t_0}$ while $\lim_{\nu \to \infty} Q_\nu =Q^{*}_{\rho_0, t_0}$.  
Also, set 
$$M_\nu :=  \left ( \rho_0^{-n} \iint_{S_\nu}   v^{2\theta_\nu}   \, d\mu\, dt \right )^{1/2\theta_\nu}.$$
It then follows from \eqref{eqn-com1} that
\be\label{eqn-iter1}
M_{\nu+1}^{2\theta_{\nu+1}} \leq C\, B_\nu^{1+1/q} \,  M_\nu^{2\theta_\nu (1+1/q)}
\ee
with 
$$B_\nu=(r_\nu-r_{\nu+1})^{-2} \, t_0^{-1}  \leq C\, \nu^4 \, t_0^{-1}.$$
Since $q >1$, it follows from \eqref{eqn-iter1} that 
\be\label{eqn-iter3}
M_{\nu+1} \leq (C\, \nu^8 \, {t_0}^{-2})^{1/2\theta_{\nu+1}}  \,  \, M_\nu^{\lambda_\nu}
\ee
with
$ \lambda_\nu =  (1+1/q) / k(p_\nu)$.
Since $\lim_{\nu \to \infty} p_\nu = +\infty$ we have  $\lim_{\nu \to \infty} k(p_\nu) = 1+1/q$. It follows that  
$$E^\nu < \theta_\nu < (E^*)^\nu \qquad \mbox{and} \qquad  E^\nu < p_\nu < (E^*)^\nu$$
for some numbers $1 < E < E^* < \infty.$
Also, $1 <  \bar \lambda_\nu < 1+ C\, E^{-\nu}.$
We conclude from the bounds above that 
$$\lim_{\nu \to \infty} M_\nu \leq C \, t_0^{-\mu_0}  M_0^{\sigma_0}$$
for some absolute  constants $ \mu_0$ and $\sigma_0$. Thus,  
\be\label{eqn-linfty1}
\|\brw \|_{L^\infty(Q_{\rho_0,t_0} \cap S_{t_0})} \leq C \, t_0^{-\mu_1}  \left ( \rho_0^{-n}  \iint_{M_t \cap  Q^*_{\rho_0,t_0}}  
 \brw^3   \, d\mu\, dt \right )^{\sigma_1}
\ee
with    $2\theta_0=p_0+2=3$ and for  some new positive absolute constants $\mu_1$ and $\sigma_1$. The constant $C$ is independent of $\rho_0$ and $t_0$.  
%but depends on $T$, $\delta$,   the  bound $\sup_{M_0} |F|\, H$ and  the constants  $\alpha\eqref{eqn-cone0}. 

To finish the proof of  the proposition  it will be sufficient to estimate the integral  on  the right hand side of \eqref{eqn-linfty1}   in terms of 
$$I:= \sup_{t \in (t_0/4,{t_0}]} \left ( \rho_0^{-n}  \int_{M_t \cap Q^{**}_{\rho_0,t_0}} \brw(\cdot,t)\, d\mu \right ).$$
To this end, we set again  $B:= (\bar r-r)^{-2} \, t_0^{-1}$
and  combine \eqref{eqn-sob4} with \eqref{eqn-energy20} and the bound \eqref{eqn-boundHH},
to obtain  for  $\theta_0:=3/2$ the bound 
\be
\begin{split}\label{eqn-com5} 
\left (\rho_0^{-n} \iint_{S^r_{\rho_0,t_0}} \brw^{2\theta_0 k} \, d\mu dt  \right ) &\leq C \, B  \left ( \rho_0^{-n}   \iint_{S^\br_{\rho_0,t_0}}    
 \brw^{2\theta_0}   \, d\mu\, dt  \right ) \\ & \cdot   \sup_{t \in ((1-\br)\, t_0,{t_0}])} 
 \left ( \rho_0^{-n} \int_{M_t \cap Q^\br_{r_0,t_0}}   \brw^{2\theta_0 (k-1)q}(x,t) \, d\mu \right )^{1/q}.
 \end{split}
\ee
If we  choose $k >1$ so that $2\theta_0 (k-1)q =1$, the above bound yields  
\be\label{eqn-iter10}
\rho_0^{-n} \iint_{S^r_{\rho_0,t_0}} \brw^{2 \theta_0 k } \, d\mu \, dt  \leq C \, B  \, I^{1/q} \,  \left ( \rho_0^{-n}   \iint_{S^\br_{\rho_0,t_0}}    
\brw^{2\theta_0}   \, d\mu\, dt \right ). 
\ee
Setting, 
$$m(r,k):= \iint_{S^r_{\rho_0,t_0}} \brw^{2 \theta_0 k } \, d\mu \, dt$$
 follows from \eqref{eqn-com5} that for any $1/4 < r < \bar r < 3/4$, we have 
\be\label{eqn-mrk}
m(r,k ) \leq C \, (\bar r - r)^{-2} \bar {t_0}^{-1} \, I^{1/q} \, m(\bar r,1).
\ee
Using H\"older's inequality we have
$$m(\bar r,1) \leq m(\bar r,k)^{\lambda/k} \, m(\bar r,s)^{(1-\lambda)/s}$$
for any $ s \in (0,1)$ with $\lambda=\frac{(1-s)k}{k-s}$. For $\gamma >1$ and $r \in [2/3,1]$, \eqref{eqn-mrk}  shows that
\bee
\begin{split}
\log m( 3 r^\gamma /4,k) \leq \log C + \log {t_0}^{-1} &+ \frac 1q \log I + \log ( 3 \, (r-r^\gamma)/ 4)^{-2}\\
&+ \frac{\lambda}k  \log m(3r/4,k) + \frac{1-\lambda}q \,  \log m(3/4,s)
\end{split}
\eee
since $m(3r/4,q) \leq m(3/4,q)$. Integrating in $r$ with respect to $dr/r$ on $[2/3,1]$  we find after a change of variable that
\be\label{eqn-mrk2}
\begin{split}
\gamma^{-1} \int_{2/3}^1 &\log m(3r/4,k) \, \frac{dr}r \\
& \leq C_1 \log I + C_2 \log {t_0}^{-1} + C_2 \log m(3/4,s) +
 C_3 + \frac  \lambda k \int_{2/3}^1 m( 3r/4, k) \, \frac{dr}r. 
 \end{split}
\ee
Now choose $s$ so that $2\theta_0 s  = 1$ (recall that we have set $\theta_0=3$)  and $\gamma$ so close to $1$ so that $\gamma^{-1} > \lambda /k$. 
If  $m(1/2,k) \leq 1$,
then since $k >1$ we conclude that $m(1/2,1) \leq C$ and the bound ${\ds \|\brw \|_{L^\infty(Q_{\rho_0,t_0} \cap S_{t_0})} \leq C}$ follows from  \eqref{eqn-linfty1}.
Otherwise, $\log m(3r4,k) >0$ for $r \in [2/3,1]$ and from \eqref{eqn-mrk2} we obtain 
$$
(\gamma^{-1} - \frac \lambda k )  \int_{2/3}^1 \log m(3r/4,k) \, \frac{dr}r  \leq C_1 \log I + C_2 \log  {t_0}^{-1} + C_2 \log m(3/4,s) + C_3 
$$
which yields
$$m(1/2,k) \leq C \, t_0^{-\mu_2}  I^{\sigma_4}\, m(3/4,s)$$
or equivalently 
\be
\label{eqn-linfty70}
\rho_0^{-n} \iint_{Q_{\rho_0,t_0}^* \cap S_{t_0}}  \brw^{2\theta_0 k} \, d\mu \, dt 
\leq C\,  \bar {t_0}^{-\mu_2}  I^{\sigma_2} \, \left (\rho_0^{-n} \iint_{Q_{\rho_0,t_0}^{**} \cap S_{t_0}} \brw \, d\mu \, dt  \right )^{\sigma_3} 
\ee
Since, $\iint_{Q_{\rho_0,t_0}^{**} \cap S_{t_0}} \brw \, d\mu \, dt  \leq C\, I$,  
combining \eqref{eqn-linfty70}  with \eqref{eqn-linfty1} yields  the bound
\be\label{eqn-linfty2}
\|\brw \|_{L^\infty(Q_{\rho_0,t_0}  \cap S_{t_0})} \leq C \, \bar {t_0}^{-\mu}  I^\sigma
\ee
for some new absolute constants $\sigma >0$ and $\mu >0$. The constant $C$ is independent of $r_0$ and $\bar {t_0}$. 
Recalling that $\bar w=\max (w,1)$ we conclude
\eqref{eqn-Hbelow}. 
\end{proof}

Proposition \ref{prop-Hbelow} provides an $L^\infty$ bound on $w(\cdot, t)$ on $M_t \cap  \{ |F| \geq 2 \}$, $0 < t \leq \tau$.
The next result gives an $L^\infty$ bound on $w(\cdot, t)$ on $M_t \cap  \{ |F| \leq 1 \}$. 
For any  given  $t_0 \in (0,\tau]$ and $r \in (0,1)$ 
we consider the parabolic cylinders  in  $\R^{n+1} \times (0,+\infty)$ given by
$$Q_{t_0}:= B_2(0)  \times (t_0/2,t_0]  \qquad \mbox{and}  \qquad Q_{t_0 }^{**}:= B_4(0)  \times (t_0/4,t_0] $$
where $B_r(0):= \{ \, \bx \in \R^{n+1}: \,\, |\bx | < r \}$ denotes the ball in $\R^{n+1}$ centered at the origin of radius $r$. 
We have the  following estimate.

\begin{prop}\label{prop-Hbelow2} Assume that $M_t$ is a solution to  \eqref{eqn-IMCF} as in Theorem
 \ref{thm-ste}   defined for 
$t \in (0,\tau]$ and assume that  $ \tau < T-3\delta$ with $T$ given by \eqref{eqn-T} and  $\delta >0$.
There exist absolute constants $\mu >0$ and $\sigma >0$ and a constant $C$ that depends  on  $\alpha_0, \kappa$,  on $\delta$,
and the initial  bound $\sup_{M_0} \Fom \, H$,    
for which  $w:= \gah(t)  \,(\Fhn \, H)^{-1}$ satisfies the bound
\be\label{eqn-Hbelow2}
\|w\|_{L^\infty( Q_{t_0} \cap S_{t_0})} \leq C \, {t_0}^{-\mu} 
\left (1 + \sup_{t \in (t_0/4, {t_0}]} \int_{M_t \cap  Q^{**}_{t_0}} w(\cdot,t)\, d\mu \right )^{\sigma}
\ee
which holds for any $t_0 >0$ such that $Q_{t_0} \cap S_{t_0}$ is not empty. 
\end{prop}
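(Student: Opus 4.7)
The plan is to mimic the parabolic Moser iteration scheme of Proposition \ref{prop-Hbelow}, now carried out on the bounded interior cylinder $Q_{t_0}^{**} = B_4(0) \times (t_0/4, t_0]$, in which $|F|$ is uniformly bounded and no spatial rescaling by $\rho_0$ is required. The key geometric input replacing the annular bound $|F| \sim \rho_0$ is a uniform upper bound on the mean curvature: applying Proposition \ref{prop-Hloc-above} with $\bar x_0 = 0$ and $r = 5$, and using that $\sup_{M_0}\Fom H < \infty$ by \eqref{eqn-steHt0}, one gets $H \leq C$ on $M_t \cap B_4(0)$ for every $t \in [0,\tau]$, with $C$ depending only on the initial data, $\alpha_0, \kappa$ and $\delta$. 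Hence $H^{-2} \geq c > 0$ on the support of any cutoff supported in $Q_{t_0}^{**}$, so the diffusion in the differential inequality \eqref{eqn-nice3} for $\brw := \max(w,1)$ is uniformly parabolic there, and the same $H^2 \leq C$ controls the curvature term in the Sobolev inequality \eqref{eqn-sob1}.

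With this in hand, I would choose a radial cutoff $\eta(F,t) = \psi(|F|,t)$ with $\psi \in C^\infty_c(Q_{t_0}^{**})$, $\psi \equiv 1$ on a suitable intermediate cylinder $Q_{t_0}^r$, and $|\psi_\rho| + t_0|\psi_t| \leq C(\bar r - r)^{-1}$. Multiplying \eqref{eqn-nice3} by $\brw^p \eta^2$, integrating by parts, and absorbing the cross term $\int H^{-2} \brw^p \eta\, \nabla \brw\cdot \nabla\eta$ into the good gradient term (using $H^{-2} \geq c$), I expect the interior energy estimate, with $\theta = (p+2)/2$,
\begin{equation*}
\sup_{t \in (0,t_0]} \int_{M_t} (\eta^2 \brw^{p+1})(\cdot,t)\, d\mu \;+\; \int_0^{t_0}\!\!\int_{M_t} |\nabla(\eta \brw^\theta)|^2\, d\mu\, dt \;\leq\; C\, t_0^{-1} (\bar r - r)^{-2} \int_0^{t_0}\!\!\int_{M_t \cap \{\eta>0\}} \brw^{2\theta}\, d\mu\, dt.
\end{equation*}
Compared to Lemma \ref{lem-energy20}, the factor $\rho_0^2$ is simply absent and the possible smallness of $\Fhn$ near the origin causes no harm, because the gradient term now carries the full $|\nabla(\eta\brw^\theta)|^2$ directly from $H^{-2} \geq c$ rather than from the weighted form $\Fhn^2 |\nabla \brw^\theta|^2$.

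The second step is to feed this energy estimate into the Sobolev inequality \eqref{eqn-sob1} applied to $h = \eta \brw^\theta$ (using $H^2 \leq C$ on the support of $\eta$), producing the same reverse-H\"older recursion as in the proof of Proposition \ref{prop-Hbelow}. With indices $k_\nu, \theta_\nu$ defined by \eqref{eqn-iter0} and quantities $M_\nu = \bigl(\iint_{S_\nu} \brw^{2\theta_\nu} d\mu dt\bigr)^{1/(2\theta_\nu)}$ on nested subcylinders shrinking to $Q_{t_0}$, I iterate $M_{\nu+1} \leq (C\nu^8 t_0^{-2})^{1/2\theta_{\nu+1}} M_\nu^{\lambda_\nu}$, take $\nu \to \infty$, and obtain
\begin{equation*}
\|\brw\|_{L^\infty(Q_{t_0}\cap S_{t_0})} \;\leq\; C\, t_0^{-\mu_1} \left( \iint_{Q_{t_0}^* \cap S_{t_0}} \brw^3\, d\mu\, dt\right)^{\sigma_1}
\end{equation*}
for absolute $\mu_1, \sigma_1 > 0$. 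A second round of the same energy/Sobolev argument combined with the Dahlberg--Kenig style H\"older interpolation between $L^3$ and $L^1$ (carried out precisely as in the passage from \eqref{eqn-linfty1} to \eqref{eqn-linfty2}) then upgrades the $L^3$ integral on the right-hand side to the $L^1$ spatial average appearing in \eqref{eqn-Hbelow2}, at the cost of an additional factor $1 + (\cdots)^\sigma$.

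The only genuine obstacle is verifying the localized mean-curvature bound $H \leq C$ on $M_t \cap B_4(0)$ with a constant depending only on the initial data and $\delta$ (not on $\tau$): this requires combining Proposition \ref{prop-Hloc-above} (which localizes $\eta H$ in terms of its initial value) with the global hypothesis \eqref{eqn-steHt0} to ensure $\sup_{M_0} \eta(F(\cdot,0)) H(\cdot,0) < \infty$. Once this is established, the remainder is a cosmetic adaptation of the Moser iteration from Proposition \ref{prop-Hbelow} in which $\rho_0$ is simply set equal to an absolute constant of order one.
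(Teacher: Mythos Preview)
Your proposal is correct and matches the paper's own proof, which consists of the single remark that the argument of Proposition \ref{prop-Hbelow} carries over verbatim without the $\rho_0$-rescaling. You have in fact supplied the one detail the paper glosses over: on $B_4(0)$ the lower bound $\Fhn^2 \geq c\rho_0^2$ used in Lemma \ref{lem-energy20} is unavailable (since $\Fhn \sim |F|$ may be small near the origin) and must be replaced by the local mean-curvature bound $H \leq C$ from Proposition \ref{prop-Hloc-above}, giving $H^{-2}\geq c$ directly in the diffusion term; the resulting shift in the gradient exponent (one gets $\theta'=p/2$ rather than $\theta=(p+2)/2$ when $H^{-2}\geq c$ is used in place of the identity $H^{-2}\brw^{p-2}=\gah^{-2}\Fhn^2\brw^p$) is harmless for the Moser iteration.
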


\begin{proof} The proof is the very similar  as the proof of Proposition  \ref{prop-Hbelow}. 
It is actually simpler as it doesn't need to be scaled with respect to $\rho_0$.   
\end{proof}

\begin{proof}[Proof of Theorem \ref{thm-Hbelow0}]  Readily follows by combining the two estimates in 
Propositions  \ref{prop-Hbelow} and \ref{prop-Hbelow2}. 
\end{proof} 

We will next combine Theorems \ref{thm-lpestw} and \ref{thm-Hbelow0} to obtain the following $L^\infty$ bound on 
$w$  in terms of the  initial data. 

\begin{thm}[$L^\infty$  bound on $w$ in terms of the initial data] \label{thm-linfty}
Assume that $M_t$ is a solution to  \eqref{eqn-IMCF} as in Theorem \ref{thm-ste}  defined for 
$t \in (0,\tau)$, and assume that  $ \tau < T-3\delta$ with $T=T(\alpha_0)$ given by \eqref{eqn-T} and  $\delta >0$.
Then, of any $t_0 \in (0, \tau/2]$ there exists a constant  $C_\delta \big ( t_0, \alpha_0, \kappa, \sup_{M_0} w, 
\inf_{M_0} w \big )$ such that 
\be\label{eqn-Hbelow0} \sup_{ t \in (t_0,\tau) } \|w(\cdot,t)\|_{L^\infty(M_t)} \leq 
C_\delta \big ( t_0, \alpha_0, \kappa, \sup_{M_0} w,  \inf_{M_0} w \big ).  
\ee
\end{thm}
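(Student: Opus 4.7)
The plan is to combine Theorem \ref{thm-Hbelow0} with the global $L^1$ estimate of Theorem \ref{thm-lpestw} at $p=0$. Theorem \ref{thm-Hbelow0} reduces the desired $L^\infty$ bound (with the $t_0^{-\mu}$ dependence already built in) to controlling
$$J(t):=\sup_{R\geq 1}R^{-n}\int_{M_t\cap\{|F|\leq R\}}w(\cdot,t)\, d\mu$$
uniformly for $t\in(t_0/4,\tau]$ in terms of the initial data. Since by Proposition \ref{prop-asb} one has $w(\cdot,t)\to\gah(t)/\gamma(t)<1$ as $|F|\to\infty$, the function $w$ is not integrable on $M_t$, so I split
$$w=\bar w+\min(w,1),\qquad \bar w:=(w-1)_+,$$
and estimate the two contributions to $J(t)$ separately.

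For the bounded piece, $\min(w,1)\leq 1$ gives
$$R^{-n}\int_{M_t\cap B_R}\min(w,1)\, d\mu\leq R^{-n}|M_t\cap B_R|\leq C(\alpha_0,\kappa),$$
uniformly in $R\geq 1$ and $t\in[0,\tau]$. This area bound is immediate because $M_t$ is a convex graph $x_{n+1}=\bar u(x,t)$ with $|D\bar u|$ bounded uniformly in $t$ (by \eqref{eqn-conet}, convexity, and the gradient estimate \eqref{eqn-grad}), so the induced area element is bounded and the projection of $M_t\cap B_R$ onto $\R^n$ lies in the Euclidean ball of radius $R$.

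For the supercritical piece, since $R\geq 1$,
$$R^{-n}\int_{M_t\cap B_R}\bar w\, d\mu\leq \int_{M_t}\bar w(\cdot,t)\, d\mu,$$
and Theorem \ref{thm-lpestw} applied with $p=0$ yields
$$\sup_{t\in[0,\tau]}\int_{M_t}\bar w(\cdot,t)\, d\mu\leq C(\delta,T)\Big(1+\int_{M_0}\bar w(\cdot,0)\, d\mu\Big).$$
The right-hand side is finite and quantitatively controlled by the stated constants: pointwise $\bar w(\cdot,0)\leq \sup_{M_0}w$, while the support $\{\bar w(\cdot,0)>0\}=\{H\,\Fhn<\gah(0)\}$ is contained in a ball of $\R^{n+1}$ whose radius depends only on $\alpha_0,\kappa,\sup_{M_0}w,\inf_{M_0}w$. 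The latter follows from the approach $H\,\Fhn\to\gamma_0>\gah(0)$ at infinity, provided by Proposition \ref{prop-asb} applied at $t=0$, with a quantitative rate dictated by the initial bounds on $w$. Combined with the area bound above this gives $\int_{M_0}\bar w\, d\mu\leq C(\alpha_0,\kappa,\sup_{M_0}w,\inf_{M_0}w)$.

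Putting the two pieces together bounds $J(t)\leq C_\delta(\alpha_0,\kappa,\sup_{M_0}w,\inf_{M_0}w)$ on $(t_0/4,\tau]$, and inserting this into the conclusion of Theorem \ref{thm-Hbelow0} produces \eqref{eqn-Hbelow0} with the claimed dependences (including the explicit $t_0^{-\mu}$ factor). The one nontrivial point is the quantitative compactness of the initial support of $\bar w$ in terms of $\sup_{M_0}w$ and $\inf_{M_0}w$; once the splitting $w=\bar w+\min(w,1)$ is in place, everything else is a direct combination of the two preceding theorems.
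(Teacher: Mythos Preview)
Your overall strategy—combine Theorem~\ref{thm-Hbelow0} with the $L^1$ estimate of Theorem~\ref{thm-lpestw} at $p=0$, after splitting $w=\bw+\min(w,1)$—is exactly the paper's. The gap is in step~5 and~6: you apply Theorem~\ref{thm-lpestw} on the full interval $[0,\tau]$ and then invoke Proposition~\ref{prop-asb} \emph{at $t=0$} to get compact support of $\bw(\cdot,0)$. Neither move is available. Proposition~\ref{prop-asb} asserts $v\to\gamma(t)$ only for $t\in(0,\tau]$; this asymptotic is a parabolic regularity gain and is \emph{not} assumed at $t=0$. The hypotheses on the initial data give only $c_0\leq H\,\Fom\leq C_0$, hence $H\,\Fhn$ lies in a fixed compact interval but need not tend to $\gamma_0$ at infinity—so $\{\bw(\cdot,0)>0\}=\{H\,\Fhn<\gah(0)\}$ can be unbounded, and $\int_{M_0}\bw\,d\mu$ need not be finite. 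Since the energy inequality (Lemma~\ref{lem-energy}) underlying Theorem~\ref{thm-lpestw} integrates by parts using the compact support of $\bw(\cdot,t)$, the $L^1$ estimate on $[0,\tau]$ is not justified. Your final paragraph identifies this as ``the one nontrivial point,'' but the bounds $\sup_{M_0}w,\ \inf_{M_0}w$ do not by themselves give a quantitative radius for the support at $t=0$.

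The paper's fix is to shift the base time: apply Theorem~\ref{thm-lpestw} on $(t_0/4,\tau]$ rather than $[0,\tau]$, so the right-hand side involves $\int_{M_{t_0/4}}\bw(\cdot,t_0/4)\,d\mu$. At the positive time $t_0/4$, Proposition~\ref{prop-asb} \emph{does} apply and gives compact support of $\bw(\cdot,t_0/4)$ in a ball of radius $R_0=R_0(t_0)$. To control $\sup_{M_{t_0/4}}\bw$ in terms of $\sup_{M_0}w$, the paper runs a short-time maximum-principle argument directly on the differential inequality \eqref{eqn-eqw2}: setting $m(t)=\sup_{M_t}w$ one gets $m'(t)\leq 2\gah^{-1}(t)\,m(t)^2$, which integrates to $m(t)\leq 2m(0)$ for $t\leq\tau_0(m(0),\gah_0)$ small. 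This closes the loop and produces the stated constant $C_\delta(t_0,\alpha_0,\kappa,\sup_{M_0}w,\inf_{M_0}w)$, with the $t_0$-dependence entering through $R_0(t_0)$.
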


\begin{proof}  We recall the definition of $\bw:= (w-1)_+$,  with $w:=  \gah(t) \, v^{-1} =\gah(t)  \,(\Fhn \, H)^{-1}$ and 
$\hat \gamma$ as defined at the beginning of this section. 
Since $\Fhn = -\Fom \, \omn$ and $-\omn := (\sqrt{1+ |D\bar u|^2})^{-1}$ satisfies $ (\sqrt{1+\alpha_0^2})^{-1} \leq \omn \leq 1$,
it follows that the  assumed  initial bound \eqref{eqn-conHF} and the definition of $w$ imply the bound 
\be\label{eqn-w111} 
 \bar c_0 \leq w (\cdot, 0)  \leq \bar C_0
 \ee
for some positive constants $\bar c_0, \bar C_0$ depending on the constants 
$c_0, C_0$ in \eqref{eqn-conHF} and $\alpha_0, \hat \gamma_0:= \hat \gamma(0)$.

For any $t_0 \in (0,\tau/2]$ we have that \eqref{eqn-linfty} holds. Hence,  it is sufficient to bound
the righthand side of \eqref{eqn-linfty} in terms of the initial data and $t_0$.  Since  $w \leq \hat w +1$  
and $\hat w$ is compactly supported for each $t \in ({t_0}/4, \tau]$ (the latter follows from  \eqref{eqn-limitw} 
and the fact that $\hat \gamma(t) < \gamma(t)$), we have 
\be\label{eqn-tau03}
 \sup_{t \in ({t_0}/4, \tau]} \sup_{R \geq 1}  R^{-n} \int_{M_t \cap \{ |F| \leq R \}} w (\cdot,t)\, d\mu 
\leq 1 + \sup_{t \in ({t_0}/4, \tau]}   \int_{M_t} \hat w (\cdot,t) \, d\mu.
\ee
We next want to apply  the  $L^{p+1}$ bound \eqref{eqn-lte} for $p=0$,  to bound  
$\sup_{t \in ({t_0}/4, \tau]}   \int_{M_t} \hat w (\cdot,t)\, d\mu$ in terms of the initial data and $t_0$. Notice  
that we cannot use \eqref{eqn-lte}  on the interval  $[0,\tau]$, as we have not assumed 
that \eqref{eqn-limitw} holds at $t=0$ which would imply that $\hat w (\cdot, 0)$ is compactly supported. 
It holds only for $t >0$ as a consequence of  parabolic regularity (see Proposition \ref{prop-asb}). 
Thus, we  first apply   \eqref{eqn-lte} on $(t_0/4,\tau]$ to  obtain 
\be\label{eqn-tau02}
\sup_{t \in (t_0/4,\tau]}  \int_{M_t} \bw (\cdot, t) \, d\mu \leq C(\delta,T) \left (1+ \int_{M_{t_0/4}} \bw (\cdot, t_0/4)
\, d\mu \right ).
\ee
To conclude our proof we will   bound   $\int_{M_{t_0/4}} \hat w (\cdot,t_0/4)\, d\mu$ in terms
of $\sup_{M_0 } w$ and the size of the support of $\hat w (\cdot, t_0/4)$.  
Let us first bound $\sup_{M_{t_0/4} } w$ in terms of $\sup_{M_0 } w$. We will do that for $t_0/4 \leq \tau_0$, 
for a  $\tau_0 >0$ depending only on the initial data. This is sufficient since $t_0$ in \eqref{eqn-Hbelow0} can
be chosen small.  To this end,  
 we will use the maximum principle on $w$ to  equation 
\eqref{eqn-eqw2}.  Indeed,  setting  $m(t):= \sup_{M_t} w$, a  straightforward application of the maximum principle 
on equation \eqref{eqn-eqw2},  using also the facts  that $\omn \leq 1$,  $c_1 <0$ and $\gamma^{-1} (t) \leq  \gamma^{-1} (\tau_0)$  
on $[0, \tau_0]$, gives  that 
 $$\frac {d m(t) }{dt} \leq 2 \omn^2 \, \hat \gamma^{-1} (t) \, m(t)^2 + c_1 \, \hat \gamma^{-1} (t) \, m(t)
 \leq 2 \hat \gamma^{-1} (\tau_0) \, m(t)^2$$
 yielding
 $$\sup_{t \in [0, \tau_0] } m(t) \leq \frac{  m(0) \, \hat \gamma (\tau_0)}{ \hat \gamma (\tau_0) - 2 m(0) \, \tau_0}.$$
If  $\tau_0$ is sufficiently small such that $\hat \gamma_0/2 \leq  \hat \gamma(\tau_0) \leq \hat \gamma_0$, 
we conclude that 
$$ \sup_{t \in [0, \tau_0] } m(t)   \leq \frac{  2 m(0) \, \hat  \gamma_0}{ \hat \gamma_0  - 4 m(0)\,  \tau_0}.$$
By decreasing  $\tau_0$ is necessary we may assume that $\hat \gamma_0  - 4 m(0)\,  \tau_0 \geq \hat \gamma_0/2$.
We conclude using also \eqref{eqn-w111} that for such a $\tau_0$ we have 
\be\label{eqn-tau05} 
\sup_{t \in [0, \tau_0] } w (\cdot, t)  \leq 2 \, m(0) \leq 2 \bar C_0.
\ee
Since we may assume without loss of generality that $t_0/4 \leq  \tau_0$,  
the last bound and $\hat w \leq w$ imply   that $\sup_{M_{t_0/4} } \hat w \leq 2 \bar C_0.$
On the other hand,  by  \eqref{eqn-limitw} and the fact that   $\hat \gamma (t) < \gamma (t)$, we have that $\hat w:= (w-1)_+$ 
is compactly supported for all $ t \in (0, \tau)$ and in particular for $t:=t_0/4$. This means that its support is contained in a ball in 
$\R^{n+1}$ of  radius $R_0:=R_0(t_0)$.  Hence, 
\be\label{eqn-tau01}
\int_{M_{{t_0}/4}} \hat w(\cdot, {t_0}/4 ) \, d\mu \leq C  \big ( R_0, \bar C_0 \big ).
\ee
Finally,  by combiinig   \eqref{eqn-linfty}  with  \eqref{eqn-tau03}, \eqref{eqn-tau02}    and \eqref{eqn-tau01} 
we conclude that   \eqref{eqn-Hbelow0} holds.

\end{proof}

\section{Long time existence Theorem \ref{thm-main}}\label{sec-lte}

In this final section we will give the proof of our long time existence Theorem \ref{thm-main}, which says that our  solution $M_t$
of the inverse mean curvature flow will exist up to time $T$, where $T$ denotes the critical time 
where the cone at infinity becomes flat and is given by \eqref{eqn-T}. 
%This will be done
%in two steps: first in Theorem \ref{thm-main} below,  we will give the proof  under the extra assumption that $H \, \Fom \geq c_0 >0$
%on $M_0$. This assumption will be replaced next in Proposition  \ref{prop-bbH}  by the weaker condition 
%that $\liminf_{|F| \to + \infty} H \, \Fom \geq c_0 >0$
%on $M_0$ which allows $H$ to vanish on any compact set of $M_0$. 
% We recall that for an initial data that satisfies  \eqref{eqn-cone0}, $T$ denotes the critical time 
%where the cone at infinity becomes flat and is given by \eqref{eqn-T}. 
%
%
%\begin{thm}\label{thm-main} Assume that the initial surface $M_0$ is an entire convex graph $x_{n+1} = \uu_0(x)$  over $\R^n$ 
%which satisfies the assumptions of Theorem \ref{thm-ste}.  
%Then, there exists a unique $C^\infty$ smooth  solution $M_t$ of the inverse mean curvature flow  \eqref{eqn-IMCF}
% for $t \in (0,T)$, with $T$ given by \eqref{eqn-T}. The solution $M_t, \,\, t \in (0,T)$ remains  an entire convex graph 
%$x_{n+1}=\uu(x,t)$ over $\R^n$, it satisfies condition \eqref{eqn-conet},   and has $H >0$. 
%
%\end{thm} 

\begin{proof}[Proof of Theorem \ref{thm-main}]  Our short time existence  Theorem \ref{thm-ste}   
implies the existence  of a maximal time $\tau_{\max} >0$ for which a 
convex  solution $M_t$ of 
\eqref{eqn-IMCF}
exists  on $[0,\tau_{\max})$ and the following hold:
\begin{enumerate}[i.]
\item $M_t$, $t \in [0,\tau_{\max})$  is an entire convex graph $x_{n+1} = \uu(x,t)$ over $\R^n$ 
which satisfies condition \eqref{eqn-conet}; 
\item $\uu$ is $C^\infty$ smooth on $\R^n \times (0,\tau_{\max})$;
\item   $ c_{\tau_1} < H \, \Fom  \leq C_0$, on $t \in [0,\tau_1]$, for all $0 < \tau_1 < \tau_{\max}$.
\end{enumerate} 

It $\tau_{\max} =T$, we are done, otherwise $\tau_{\max} < T-\delta$, for some $\delta >0$. 
We claim that  
\be\label{eqn-Hmax1}
\inf_{t \in [0,\tau_{\max})} \inf_{M_t} \, H \, \Fhn   \geq  c_\delta >0. 
\ee
To this end,  we will combine \eqref{eqn-Hbelow0} with \eqref{eqn-tau05}.  We have seen in the proof of Theorem \ref{thm-linfty}
that there exists $\tau_0 >0$ depending only on the initial data such that  \eqref{eqn-tau05} holds. 
Assume without loss of generality  that $\tau_0 < T/2$. 
Since  $w:= \hat \gamma(t) (H\, \Fhn)^{-1}$,  it follows from \eqref{eqn-tau05} that 
$$\inf_{t \in [0,\tau_0]}  \inf_{M_t} \, H \, \Fhn   \geq c_1(\tau_0)  >0.$$ 
Now we apply  \eqref{eqn-Hbelow0} for $t_0:=\tau_0$ and $\tau:=\tau_{\max}$ to obtain the bound
$$\sup_{ t \in (\tau_0,\tau_{\max} ) } \|w(\cdot,t)\|_{L^\infty(M_t)} \leq 
C_\delta \big ( \tau_0 \big ).$$
This can be done since 
conditions i.-iii. above imply that Theorem \ref{thm-linfty} holds on $(0, \tau_{\max})$.
It follows that 
$$\inf_{t \in (\tau_0,\tau_{\max} )}  \inf_{M_t} \, H \, \Fhn   \geq   c_2 (\delta, \tau_0)  >0.$$
Combining the last  two bounds yields that \eqref{eqn-Hmax1} holds and 
since  $\Fhn = - \Fom \, \omn \leq \Fom$ we also have 
\be
\inf_{t \in (0,\tau_{\max} )}  \inf_{M_t} \, H \, \Fom   \geq  c_\delta   >0.
\ee
In addition,   by Proposition \ref{prop-Habove} we have 
\be\label{eqn-Hmax2}
\sup _{M_t} \, H \, \Fom  \leq \sup _{M_t} \, H \, \Fom \leq C_0.
\ee

On the other hand,  $\uu_t \leq 0$ and \eqref{eqn-conet} imply that the pointwise   limit $\uu(x,\tau_{\max}) := \lim_{t \to \tau_{\max}} \uu(x,t)$
exists for all $x \in \R^n$ and it defines a convex graph. Moreover, it satisfies \eqref{eqn-conet} at $t=\tau_{\max}$. 
Now the lower and upper bounds \eqref{eqn-Hmax1}, \eqref{eqn-Hmax2} and   \eqref{eqn-conet} 
 for $t \in [0,\tau_{\max}]$, 
imply that the  fully-nonlinear equation \eqref{eqn-u0} satisfied by $\uu$ is strictly parabolic on compact subsets of $\R^n \times [0,\tau_{\max}]$.
It follows by standard   local regularity results on fully-nonlinear equations  that $\uu(\cdot,\tau_{\max})$  is $C^\infty$ smooth.
Moreover, the above bounds show that $c_\delta \leq H\, \Fom \leq C_0$ on $M_{\tau_{\max}}$. Also, since
$|\Fn|  \leq C_0$ on $M_0$ (see in \eqref{eqn-fn00})   its  evolution equation given  in Lemma \ref{lem-HI1}  and convexity  imply  the bound  
$|\Fn|  \leq C(T)$ on $M_{\tau_{\max}}$. We conclude from the above discussion that at time $t=\tau_{\max}$ 
the entire graph $M_{\tau_{\max}}$ given  by $x_{n+1} = \uu(\cdot, \tau_{\max})$ satisfies all the assumptions  of our short time existence result Theorem \ref{thm-ste},
hence the flow can be extended beyond $\tau_{\max}$ contradicting its maximality.  This shows that $\tau_{\max} = T$,
showing that our solution exists for all $t \in (0,T)$.

\smallskip

Let us now observe that as $t \to T$, the solution converges to a horizontal plane of height $h \in [0,\kappa]$.  First, the pointwise   limit $\uu(x,T):= \lim_{t \to T} \uu(x,t)$
exists,  since $\uu_t(x,t) \geq 0$ for all $t <T$. 
Second, $\uu (\cdot,T)$ is convex and lies between the two horizontal planes $x_{n+1}=0$ and $x_{n+1}=\kappa$. The latter
simply follows from \eqref{eqn-conet} and the fact that $\alpha(T)=0$. In addition, our a'priori local bound from above on the mean curvature $H$   shown in 
Proposition \ref{prop-Hloc-above},  which holds uniformly up to $t=T$,  implies that $\uu (\cdot,T) \in C^{1,1}_{{\text loc}}(\R^n)$. It follows that $\uu (\cdot,T)$ must be
a horizontal plane of height $h \in [0,\kappa]$, and that the convergence  $\lim_{t \to T}  \uu(\cdot,t) \equiv  h$   is in $C^{1,\alpha}$,  on any compact subset 
of $\R^n$ and for all $\alpha < 1$.

\end{proof}

\centerline{\bf Acknowledgements}

\smallskip

\noindent P. Daskalopoulos  has  been partially supported by NSF grant DMS-1600658. She also wishes to thank 
 University of T\"ubingen    and Oberwolfach Research Institute for Mathematics for their hospitality  during the preparation of this work. 

\smallskip

\noindent G. Huisken wishes to thank 
Columbia  University  and  the Institute for Theoretical Studies ITS at ETH Z\"urich  for their hospitality during the preparation of this work.

\end{document}